\documentclass[11pt]{amsart}
\usepackage[left=20mm, right=20mm, top=20mm, bottom=20mm]{geometry}
\usepackage{amssymb}
\usepackage{bm}
\usepackage{graphicx}
\usepackage[centertags]{amsmath}
\usepackage{amsfonts}
\usepackage{amsthm}
\usepackage{amsbsy}
\usepackage{mathtools}
\usepackage{mathrsfs}
\usepackage{cases}
\usepackage[all]{xy}
\usepackage{tabularx}
\usepackage{algorithm2e}
\usepackage[linktocpage=true]{hyperref}

\usepackage{amsthm}
\usepackage[most]{tcolorbox}

\tcbset{
    thmstyle/.style={
        enhanced jigsaw,     
        breakable,
        sharp corners,
        colback              = white,
        colframe             = black,
        boxrule              = 0.4pt,
        left                 = 8pt,
        right                = 8pt,
        top                  = 8pt,
        bottom               = 8pt,
        toprule at break     = 0.4pt,   
        bottomrule at break  = 0.4pt,   
        lines before break   = 3,       
        before               = \medskip\noindent,
        after                = \medskip,
        before upper = {\parindent=15pt\setlength{\belowdisplayskip}{8pt}}
    }
}

\newtheorem{theorem}{Theorem}[section]
\newtheorem{defnB}[theorem]{Definition}
\newtheorem{thmB}[theorem]{Theorem}
\newtheorem{propB}[theorem]{Proposition}
\newtheorem{lemB}[theorem]{Lemma}

\newtheorem{factB}[theorem]{Fact}
\newtheorem{claimB}[theorem]{Claim}
\newtheorem{remB}[theorem]{Remark}

\tcolorboxenvironment{theorem}{thmstyle}
\tcolorboxenvironment{defnB}{thmstyle}
\tcolorboxenvironment{thmB}{thmstyle}
\tcolorboxenvironment{propB}{thmstyle}
\tcolorboxenvironment{lemB}{thmstyle}
\tcolorboxenvironment{corB}{thmstyle}
\tcolorboxenvironment{conjB}{thmstyle}
\tcolorboxenvironment{factB}{thmstyle}
\tcolorboxenvironment{claimB}{thmstyle}
\tcolorboxenvironment{remB}{thmstyle}

\usepackage[backend=biber,style=alphabetic]{biblatex}
\hypersetup{
    colorlinks=true,
    linkcolor=blue,
    filecolor=blue,
    citecolor=blue,
    urlcolor=cyan}
\newtheorem{thm}{Theorem}[section]

\newtheorem{question}[thm]{Question}
\newtheorem{defn}[thm]{Definition}
\newtheorem{conjecture}[thm]{Conjecture}

\theoremstyle{remark}

\newcommand{\meg}{\geqslant}
\newcommand{\mik}{\leqslant}
\renewcommand{\mik}{\leqslant}
\renewcommand{\meg}{\geqslant}
\renewcommand{\leq}{\leqslant}
\renewcommand{\geq}{\geqslant}
\DeclareMathOperator*{\ave}{\mathbb{E}}
\DeclareFontFamily{U}{stix2bb}{}
\DeclareFontShape{U}{stix2bb}{m}{n} {<-> stix2-mathbb}{}

\newcommand{\prob}{\mathbb{P}}
\newcommand{\n}{\mathbb{N}}

\newcommand{\e}{\mathbb{E}}
\newcommand{\real}{\mathbb{R}}

\newcommand{\thtensor}{\boldsymbol{\theta}=\langle \theta_i: i\in [n]^d\rangle}

\newcommand{\Fcal}{\mathcal{F}}

\newcommand{\Acal}{\mathcal{A}}
\newcommand{\Mcal}{\mathcal{M}}

\newcommand{\Ccal}{\mathcal{C}}

\newcommand{\Dcal}{\mathcal{D}}

\newcommand{\Vcal}{\mathcal{V}}

\newcommand{\seminorm}[1]{{\left\vert\kern-0.25ex\left\vert\kern-0.25ex\left\vert #1
    \right\vert\kern-0.25ex\right\vert\kern-0.25ex\right\vert}}
\DeclarePairedDelimiter\abs{\lvert}{\rvert}
\DeclarePairedDelimiter\norm{\lVert}{\rVert}

\DeclarePairedDelimiterX\Set[1]\{\}{%

#1
}
\newcommand{\tailsize}[2]{\alpha_{#1,#2}}
\DeclareMathOperator{\Emb}{Emb}

\begin{document}

\title[Metric embeddings of cubes into dense subsets of cubes]{Metric embeddings of cubes into dense subsets of cubes}
\author{Miltiadis Karamanlis}
\address{Department of Mathematics, University of Athens, Panepistimiopolis 157 84, Athens, Greece}
\email[Miltiadis Karamanlis]{kararemilt@gmail.com}
\author{Cosmas Kravaris}
\address{Department of Mathematics, Princeton University, Princeton, NJ, USA}
\email[Cosmas Kravaris]{ck6221@princeton.edu}

\begin{abstract}
    \;
    Fix $k \in \mathbb{N}$ and $0 < \delta < 1$.
    We study how large $N \in \n$ must be so that for every subset of the $N$-dimensional Hamming cube $\mathcal{D} \subset Q_N$ which is $\delta$-dense, $|\Dcal|\geq \delta 2^N$, there exists some \textit{metric embedding} $f: Q_k \to \Dcal$.
    We study $3$ variants:
    For a $(1+\varepsilon)$-bi-Lipschitz map $f$ for a fixed $\varepsilon>0$,
    we show that $N = O(\varepsilon^{-2}\log(1/\delta) k^3)$.
    For an isometric map $f$ with arbitrary rescaling (i.e. undistorted), we show that $N = \log(1/\delta) e^{\Omega(k)}$. 
    For an isometric map $f$ with bounded rescaling we show that $N = \exp{[\log(1/\delta)e^{\Theta(k)}]}$.

    Regarding the path space, we prove the density analog of a coloring theorem of Rödl--Sales \cite{rodl2022blurred}.
    We give bounds for $(1+\varepsilon)$-bi-Lipschitz embeddings of the path $[k] = \{1,...,k\}$ into dense subsets of the path $[N] = \{1,...,N\}$, improving a bound of Dumitrescu \cite{dumitrescu2010approximate}.
    We prove similar bounds for the binary tree space,
    using the tree replicas theorem of Pach--Solymosi--Tardos \cite{pach2011remarks}.
    
    As a geometric application 
    we obtain a non-positive Alexandrov curvature counterpart to the work of Bartal--Linial--Mendel--Naor~\cite{BartalLinialMendelNaor2005} on the nonlinear Dvoretzky problem~\cite{BFM86}.
    By~\cite{BartalLinialMendelNaor2005}, any $\Dcal \subset Q_N$ that embeds with bi-Lipschitz distortion $<\alpha$ into a metric space of non-negative Alexandrov curvature must be small, namely, necessarily $|\Dcal| \lesssim 2^{N(1-\Omega(\alpha^{-2}))}$. 
    We prove that for every $N\gtrsim \alpha^{6}\ge 1$, any $\Dcal \subset Q_N$ that embeds with distortion $<\alpha$ into some metric space of non-positive Alexandrov curvature must satisfy $|\Dcal| \lesssim 2^{N(1-\Omega(\alpha^{-4}))}$
    via an approach which is entirely different from that of \cite{BartalLinialMendelNaor2005}. 
    We also show that nontrivial metric type and non-universality are preserved by taking finite unions of subspaces.
\end{abstract}

\maketitle
\setcounter{tocdepth}{2}


\section{\textbf{Introduction}}\label{sec:Introduction}
A central aspect of Ramsey theory is proving \textit{density theorems} which detect patterns inside every dense subset of a given combinatorial structure.
In some cases, the underlying pattern can be interpreted as a metric space and the task is to embed it into every dense subset in a manner that preserves the metric.
For example, Szemerédi's theorem \cite{szemeredi1975sets} states that for every $k \in \n$ and $0<\delta<1$ there exists $N \in \n$ such that every subset $\Dcal \subset [N]:=\{1,...,N\}$ of size $|\Dcal|>\delta N$ contains a $k$-term arithmetic progression.
A $k$-term arithmetic progression is precisely a rescaled isometric copy of the metric space $([k],|\cdot|)$ into $([N], |\cdot|)$ (where $|\cdot|$ denotes the standard metric on $\real$).
Another example is the Furstenberg--Weiss theorem \cite{furstenberg2003markov} (see also \cite{pach2011remarks}) which finds rescaled isometric copies of the binary tree metric of depth $k$ inside dense (defined in an appropriate sense) subsets of the binary tree metric of depth $N$. 
\textbf{From this metric viewpoint, it is natural to strengthen or relax the notion of embeddability and investigate how bounds sharpen or loosen, respectively}.
We refer the reader to the works of Dumitrescu \cite{dumitrescu2010approximate}, Fraser--Yu \cite{fraser2018arithmetic,fraser2021approximate}, and of Rödl--Sales \cite{rodl2022blurred} who pursued this direction.

In this paper, we study from this viewpoint the following Ramsey question: 
\textbf{Do dense subsets of the Hamming cube contain metric copies of Hamming cubes of lower dimension?}
In the bi-Lipschitz distorted setting, we also study the same question for path metrics (complementing results of \cite{dumitrescu2010approximate,fraser2018arithmetic,fraser2021approximate,rodl2022blurred}) and for tree metrics. See subsection \ref{subSect-relaxedSzemeredi} for paths and \ref{subSect-relaxedFurstenberg-Weiss} for trees.

Fix $k, N \in \n$ and let $f: (Q_k,||\cdot||_1) \to (Q_N,||\cdot||_1)$ be a map from the $k$-dimensional to the $N$-dimensional Hamming cube with their corresponding Hamming metrics $||\cdot||_1$. (See section \ref{sec:preliminaries} for basic definitions and notation.)
Denote by $\mu$ the uniform probability measure on $Q_N$.
\\-- Given $\varepsilon >0$ call $f$ a \underline{\textbf{$(1+\varepsilon)$-bi-Lipschitz map}} 
\\when there exists $r>0$ such that $r ||x-y||_1 \leq ||f(x)-f(y)||_1 \leq (1+\varepsilon) r ||x-y||_1$ for all $x,y \in Q_k$.
\\-- Call $f$ an \underline{\textbf{undistorted (or rescaled isometric) map}} \\when there exists $r>0$ such that $||f(x)-f(y)||_1 = r ||x-y||_1$ for all $x,y \in Q_k$.
\\-- Given $R \geq 1$, call $f$ a \underline{\textbf{$R$-bounded rescaling undistorted map}} 
\\when there exists $1\leq r\leq R$ such that $||f(x)-f(y)||_1 = r ||x-y||_1$ for all $x,y \in Q_k$.

Fix $k \in \n, 0 < \delta < 1, \varepsilon \geq 0, R \geq 1$.
\underline{\textbf{We denote by $\Emb(1+\varepsilon,\delta,k)$ (respectively $\Emb^{\leq R}(1,\delta,k)$)}} the smallest $N$ such that for every subset $\Dcal \subset Q_N$ with $\mu(\Dcal) \geq \delta$ (i.e. $|\Dcal| \geq \delta 2^N$) there exists a $(1+\varepsilon)$ bi-Lipschitz map (respectively $R$-bounded rescaling undistorted map) $f: Q_k \to Q_N$ such that $f(Q_k) \subset \Dcal$.
It is not immediately clear that these numbers are finite (see Remark \ref{usingDensityHalesJewett}).
Regarding the above numbers, we prove the following:\footnote{Recall that for two sequences $\{a_n\}_n, \{b_n\}_n \subset \real^+$ one has $a_n = O(b_n)$ $\iff$ $a_n \lesssim b_n$ $\iff$ $b_n = \Omega(a_n)$ $\iff$ $b_n \gtrsim a_n$ if and only if there exists $0<C<\infty$ such that $a_n \leq C b_n$ for all $n \in \n$. Also, we write $a_n \asymp b_n$ when $a_n \lesssim b_n$ and $b_n \lesssim a_n$. The same notation is used whenever we have a function of several parameters.}

\begin{thmB}[Main theorem]\label{thm:MAIN}
    For every $k \in \n,\;0 < \delta < 1,\; 0 < \varepsilon < 1/4,\;R \geq 2$,
    \begin{equation}\label{eq:MAIN_biLipschitz_bound}
        k + \log(1/\delta) \leq \Emb(1+\varepsilon,\delta,k) \leq O\left(k^3\dfrac{1}{\varepsilon^2}\log\Bigl(\frac{1}{\delta}\Bigr)\right)
    \end{equation}
    \begin{equation}
        e^{\Omega(k)}\,\log\bigl(\tfrac{1}{\delta}\bigr) \leq \Emb(1,\delta,k) \leq \exp \left({e^{O(k)} \log\left(\frac{1}{\delta}\right)}\right)
    \end{equation}
    \begin{equation}
        \exp\left({e^{\Omega_R(k)}\;\log\left(\frac{1}{\delta}\right)}\right) \leq \Emb^{\leq R}(1,\delta,k) \leq \exp \left({e^{O(k)} \log\left(\frac{1}{\delta}\right)}\right).
    \end{equation}
\end{thmB}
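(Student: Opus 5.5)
The plan treats the six inequalities separately. The two undistorted upper bounds and the bounded-rescaling lower bound share one mechanism, namely combinatorial subcubes; the bi-Lipschitz upper bound needs a separate block construction.

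\emph{Trivial lower bound.} If $f:Q_k\to\Dcal$ is $(1+\varepsilon)$-bi-Lipschitz, then $f$ is injective, so $|\Dcal|\geq 2^k$. Now take $\Dcal$ to be a subcube of codimension $\lceil\log(1/\delta)\rceil$. This forces $2^{N-\log(1/\delta)}\geq 2^k$, that is, $N\geq k+\log(1/\delta)$.

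\emph{Upper bounds for undistorted maps.} Any combinatorial subcube of dimension $k$ (fix the coordinates outside $k$ disjoint wildcard sets $A_1,\dots,A_k$ and flip each $A_i$ as a block) is an undistorted copy of $Q_k$. If all $|A_i|=1$, the rescaling factor is $r=1\leq R$. So both upper bounds $\exp(e^{O(k)}\log(1/\delta))$ follow from a density theorem for subcubes in the style of Gunderson--Rödl--Sidorenko. I would prove it by the standard iteration. Find a coordinate $i$ and a "direction" such that $\Dcal\cap(\Dcal+e_i)$ still has density about $\delta^2$ in a subcube of one lower dimension. Iterating $k$ times gives density $\delta^{2^k}$. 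The $N$ needed is then governed by a Cauchy--Schwarz/pigeonhole step. At each step we must choose among $N$ coordinates so that the pair-density does not collapse, which costs a factor $\log$ in $N$ per step. Iterating gives $\log\log N\gtrsim k+\log\log(1/\delta)$, which is the claimed double exponential.

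\emph{Matching lower bounds.} Both use the rigidity fact that an undistorted copy of $Q_k$ in $Q_N$ with scale $r$ has the form $x\oplus\bigcup_{i:y_i=1}S_i$ with pairwise disjoint sets $S_i$, $|S_i|=r$. Consequently the Hamming weights of the image points form a Hilbert cube $w+\{\sum_i \epsilon_i a_i:\epsilon\in\{0,1\}^k\}$ with $|a_i|\leq r$.

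For $\Emb(1,\delta,k)$, I would take $\Dcal=\{x: |x| \bmod M\in S\}$, where $S\subset\mathbb Z_M$ has density about $\delta$ and contains no nondegenerate $k$-dimensional Hilbert cube; such $S$ exist for $M=e^{\Omega(k)}$. Taking products of independent blocks boosts the density loss, which should give the factor $\log(1/\delta)$.

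For bounded rescaling $r\leq R$, the differences $a_i$ lie in $[-R,R]$. This lets a random or Sidorenko-type construction on the weight/coordinate structure avoid all such cubes up to $N=\exp(e^{\Omega_R(k)}\log(1/\delta))$.

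\emph{Bi-Lipschitz upper bound (main obstacle).} Split $[N]$ into $k$ blocks $B_1,\dots,B_k$ of size $m$. For each $i$ choose two strings $z_i^0,z_i^1\in\{0,1\}^{B_i}$ with $d(z_i^0,z_i^1)\in[r,(1+\varepsilon)r]$, and set $f(y)=(z_1^{y_1},\dots,z_k^{y_k})$. This is automatically $(1+\varepsilon)$-bi-Lipschitz, since every pairwise distance is a sum of $|\{i:y_i\neq y'_i\}|$ terms, each lying in $[r,(1+\varepsilon)r]$.

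The naive probabilistic argument uses independent uniform $z_i^0,z_i^1$ and the box-norm inequality $\e\prod_y 1_\Dcal(f(y))\geq\delta^{2^k}$, together with Chernoff for the distances. It only yields $m\sim\varepsilon^{-2}2^k\log(1/\delta)$, which is far from $k^3$. To reach $k^3\varepsilon^{-2}\log(1/\delta)$, I would instead run a block-by-block density argument that loses only a factor $(1-1/k)$ in density per block.

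At step $i$, restrict to fibers and look for $z^0,z^1$ at a distance in a $(1+\varepsilon)$-window around a common $r$. The fiber sets $\Dcal_{z^0}$ and $\Dcal_{z^1}$ should have a large common part, so that one can pass to $\Dcal_{z^0}\cap\Dcal_{z^1}$ in a controlled way. This should use concentration of measure on $\{0,1\}^m$: two sets of density $\geq\delta^{O(1)}$ have $1+\varepsilon$ distance profiles that overlap once $m\gtrsim\varepsilon^{-2}k^2\log(1/\delta)$. Summing over $k$ blocks gives $N=km$.

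Making this step lose density only polynomially in $\delta$, rather than squaring it, is where I expect the real difficulty. My first attempt would be a dependent-random-choice selection of $z^1$ given $z^0$, combined with an isoperimetric (Harper) estimate on the Hamming ball of radius $r$.
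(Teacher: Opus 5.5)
Your trivial lower bound (up to rounding), your rigidity description of undistorted copies, and your observation that block maps $f(y)=(z_1^{y_1},\dots,z_k^{y_k})$ with block distances in $[r,(1+\varepsilon)r]$ are $(1+\varepsilon)$-bi-Lipschitz are all fine. The undistorted part, however, rests on two false claims.

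First, a density theorem for rescaling-$1$ copies is false. The even-weight strings form a set of density $1/2$ with $\Dcal\cap(\Dcal+e_i)=\emptyset$ for every $i$, so this set has no two points at distance $1$, whatever $N$ is. This is why the theorem assumes $R\geq 2$. (Gunderson--R\"odl--Sidorenko Boolean algebras also allow blocks of unequal sizes, so they are not isometric copies anyway.) The paper uses rescaling $2$ instead.
\begin{itemize}
\item In a middle layer $\binom{[n]}{j}$ it finds a $(j-1)$-set $y$ such that the sections $\Dcal_x$, $x=y\cup\{a\}$, have average density $>\delta/2$.
\item These $\geq n/4$ points are pairwise at distance exactly $2$.
\item More than $2/\delta$ events of average measure $\delta$ always contain two whose intersection has measure $>\delta^2/2$ (Cauchy--Schwarz). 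This gives $x_0,x_1$ with $\mu(\Dcal_{x_0}\cap\Dcal_{x_1})>\delta^2/8$.
\item Iterating with block lengths $\asymp 1/\delta_i$ and $\delta_{i+1}=\delta_i^2/8$ gives $N\leq(8/\delta)^{2^k}$.
\end{itemize}

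Second, your lower-bound set $\{x:|x|\bmod M\in S\}$ fails. Take $I_i=\{s_i,t_i\}$ and $b$ with $b_{s_i}=1$, $b_{t_i}=0$. Then the copy $b+\sum_{\alpha_i=1}\mathbf{1}_{I_i}$ lies in a single weight layer: all your $a_i$ vanish, giving a degenerate Hilbert cube. So the set contains a copy as soon as it has a point of weight in $[k,N-k]$, which any set of density $\delta$ has once $N\gg k+\log(1/\delta)$. The same weight-preserving swaps, done inside blocks, defeat product versions.

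The paper instead takes a random set of density $3\delta/2$ and uses rigidity. A union bound over the at most $(2(k+1))^N$ copies gives $N\gtrsim 2^k\log(1/\delta)/\log(k+1)$. For rescaling $\leq R$ it applies the Lov\'asz local lemma: there are at most $2^NRN^{Rk}$ copies, each meeting at most $2^kRN^{Rk}$ others. This gives $\exp(e^{\Omega_R(k)}\log(1/\delta))$. Your ``random or Sidorenko-type construction'' does not supply this count.

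For the bi-Lipschitz upper bound, the step you flag is genuinely missing. Your target of losing only a factor $1-1/k$ per block cannot be met inside $\Dcal$. For a random $\Dcal$ of density $\delta$, with high probability every intersection $\Dcal_{z^0}\cap\Dcal_{z^1}$ has density about $\delta^2$. So any fiber-intersection induction carried out in $\Dcal$ must square the density at every block.

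The missing idea is to run the squaring induction in the neighborhood $\Dcal_{\varepsilon_2}$, with $\varepsilon_2=\varepsilon/(18k)$, instead of in $\Dcal$.
\begin{itemize}
\item By Harper's inequality, $\mu(\Dcal_{\varepsilon_2})\geq 1-e^{-4k}$ once $\varepsilon_2^2N\gtrsim k\log(1/\delta)$, i.e.\ $N\gtrsim\varepsilon^{-2}k^3\log(1/\delta)$.
\item Starting from density $1-e^{-4k}$, squaring $k$ times costs only about $2^ke^{-4k}$. So $\Dcal_{\varepsilon_2}$ contains a block copy with all block distances in $(1\pm\varepsilon/18)n/2$.
\item Each of its $2^k$ vertices is then replaced by a point of $\Dcal$ within distance $\varepsilon_2kn=\varepsilon n/18$. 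Since every edge has length about $n/2$, this changes the distortion by only $O(\varepsilon)$.
\end{itemize}
Thus Harper's inequality is used to inflate $\Dcal$, not to choose $z^1$ given $z^0$, and the $1+\varepsilon$ slack pays for snapping back into $\Dcal$.
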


Note that the lower bound on $\Emb(1+\varepsilon,\delta,k)$ is trivial (any $(1+\varepsilon)$-distorted copy of $Q_k$ has exactly $2^k$ points).
\textbf{We conjecture that $\Emb(1,\delta,k) \leq e^{O(k)} \log \frac{1}{\delta}$ for all $k \in \n$ and $0 < \delta < 1$.}
See Section \ref{sec:conclusion} for questions and further directions.

There are countless Ramsey theorems which find \textit{combinatorial copies} of cubes inside dense subsets of cubes.
We do not attempt to survey them here.
Two landmark results of this type are the Sauer--Shelah lemma \cite{sauer1972density,shelah1972combinatorial} and the multi-dimensional density Hales--Jewett theorem of Furstenberg--Katznelson \cite{FurstenbergKatznelson1991} (see the discussion on Section \ref{sec:remarksProofs}).
The copies of cubes in these theorems need not preserve the Hamming metric in any meaningful way.
We mention the work of Benjamini--Cohen--Shinkar \cite{benjamini2016bi} who construct a distortion $20$ embedding
of the $k$-dimensional Hamming cube $Q_k$ into the ball of radius $\lfloor
k/2\rfloor+1$ in the Hamming cube $\{0,1\}^{k+1}$ (which is a subset of density $>1/2$)
answering a question of Lovett--Viola \cite{lovett2011bounded}.
Finally, we remark that the density Hales--Jewett theorem of Furstenberg--Katznelson \cite{FurstenbergKatznelson1991} 
gives a short proof that $\Emb(1,\delta,k) < \infty$ for all $k \in \n$ and $0<\delta < 1$ 
without yielding any reasonable quantitative bounds (see Remark \ref{usingDensityHalesJewett} in Section \ref{sec:remarksProofs}).

\subsection{Metric embeddings of paths into dense subsets of paths}\label{subSect-relaxedSzemeredi}
\;

We consider the \textbf{path space} $[N] = \{1,...,N\}$ whose metric is given by $d(i,j):=|i-j|$ and endow it with the uniform probability measure, i.e. $\mu(\Dcal) := |\Dcal|/N$ for all $\Dcal \subset [N]$. Rödl--Sales \cite{rodl2022blurred} prove that for any $\varepsilon>0, r \in \n, k \in \n$, and for any $r$-coloring of the path of length $N = O_{r, \varepsilon}(k^r)$, there is a $(1+\varepsilon)$-distorted embedding $f: [k] \to [N]$ such that $f([k])$ is monochromatic.
Conversely, they show that there exists an $r$-coloring of $[\Omega_{r, \varepsilon}(k^r)]$ which contains no monochromatic $(1+\varepsilon)$-bi-Lipschitz copy of $[k]$.
We prove a density version of this result, also giving a tight dependence on $k$.

Fix $k \in \n, 0 < \delta < 1, \varepsilon \geq 0$.
\underline{\textbf{We denote by $Path(1+\varepsilon,\delta,k)$}} the smallest $N$ such that for every subset $\Dcal \subset [N]$ with $\mu(\Dcal) \geq \delta$ (i.e. $|\Dcal| \geq \delta N$) there exists a $(1+\varepsilon)$ bi-Lipschitz map $f: [k] \to [N]$ such that $f([k]) \subset D$.
Bounds on $Path(1,\delta,k)$ correspond exactly to bounds on Szemerédi's theorem, so the following theorem can be viewed as a bi-Lipschitz relaxation of Szemerédi's theorem:

\begin{thmB}[metric embeddings of paths into dense subsets of paths]\label{thm:pathSpaces}
    \;\\For all $k\geq 3, 0 < \delta < 1, 0 < \varepsilon \leq 1$, we have:
    $$e^{\Omega(k \log(1/\delta))} \leq Path(1+\varepsilon,\delta,k) \leq e^{O( k \varepsilon^{-1} \log(1/\delta))}.$$
\end{thmB}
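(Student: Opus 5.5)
The plan is to prove the two inequalities separately: a density-increment iteration over scales for the upper bound, and a digit-restricted (Cantor-type) construction for the lower bound. The basic reduction for the upper bound is the following. If $a_1<\dots<a_k$ all lie in $\Dcal$ and every gap $a_{i+1}-a_i$ lies in $[r(1-\varepsilon/4),\,r(1+\varepsilon/4)]$, then $a_i\mapsto i$ is $(1+\varepsilon)$-bi-Lipschitz after rescaling. This holds because $|a_j-a_i|$ is a sum of $|j-i|$ gaps, so every distance is within a factor $1\pm\varepsilon/4$ of $r|j-i|$.

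For the upper bound, fix a scale $r$ and a window length $w=\varepsilon r/4$. Partition an interval $I\subset[N]$ into consecutive blocks of length $w$, and call a block \emph{good} if it meets $\Dcal$. Write $m=\lceil r/w\rceil\approx 4/\varepsilon$. If some residue class of blocks modulo $m$ contains $k$ consecutive good blocks, we pick one point of $\Dcal$ in each of them. The resulting gaps lie in $[r-w,r+w]$, so by the reduction we are done. Otherwise every run of $k$ consecutive members of every residue class contains an empty block, so at least roughly a $1/k$ fraction of the blocks of $I$ miss $\Dcal$. Then $\Dcal$ has relative density at least $\delta/(1-1/k)\geq\delta(1+1/k)$ in the union of the good blocks. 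By averaging, some good block $J$ has $\mu_J(\Dcal\cap J)\geq\delta(1+1/k)$, and we recurse inside $J$ at a smaller scale. Since density cannot exceed $1$, the iteration stops after $O(k\log(1/\delta))$ steps. Each step shrinks the ambient length by the number of blocks needed, about $mk\asymp k/\varepsilon$. This yields $N\leq (Ck/\varepsilon)^{O(k\log(1/\delta))}$.

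The main obstacle is trimming this bound to the claimed $e^{O(k\varepsilon^{-1}\log(1/\delta))}$, which requires removing the $\log k$ in the exponent. My first attempt is a more efficient increment. Instead of asking for a run of length $k$ inside a single residue class, I would allow the windows to drift: choose the $i$-th point in a window of length $\varepsilon r$ placed relative to the previous point rather than on a fixed lattice. Tracking the set of reachable positions as a union of intervals then gives a density gain of order $\varepsilon$ per scale step, with a shrink factor of only $O(1)$ to $O(1/\varepsilon)$ per step. It may instead be preferable to run the increment on the larger quantity $\delta^{1/k}$, in the style of Rödl--Sales' coloring argument adapted to densities. I expect that balancing the increment against the shrink factor is the technical heart of the proof.

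For the lower bound, I would build a Cantor-type set. Fix a base $b\asymp 1/\delta^{c}$ and a number of levels $L\asymp k$, set $N=b^L$, and let $\Dcal$ consist of the integers whose base-$b$ digits at every level lie in a short allowed interval of digits. The allowed interval is chosen so that the total density is $\delta$, and so that at each level the forbidden stretch of consecutive digits is much longer than a single allowed stretch. Suppose a $(1+\varepsilon)$-copy of $[k]$ with scale $r$ lies in $\Dcal$, and let $j$ satisfy $b^j\leq r<b^{j+1}$. Each gap is at most $2r$, and the copy has $k$ points spanning about $kr$, so it must cross a forbidden stretch at level $j$ or $j+1$ without landing in it. The construction has to make this impossible; the delicate case analysis is when $r$ is close to a power of $b$. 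Choosing the allowed fraction per level as $\delta^{1/L}$, with $L$ proportional to $k$ and $b$ polynomial in $1/\delta$, should give $N=e^{\Omega(k\log(1/\delta))}$.
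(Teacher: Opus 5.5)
Both halves of your proposal have a genuine gap. For the upper bound, your density increment only proves $N\le (Ck/\varepsilon)^{O(k\log(1/\delta))}=e^{O(k\log(k/\varepsilon)\log(1/\delta))}$. That is essentially Dumitrescu's bound, and it does not imply the statement when $\log k\gg\varepsilon^{-1}$ (e.g.\ $\varepsilon=1$, $k\to\infty$). The improvement is left as speculation, so the upper bound is not proved. The loss is built into the design. You zoom into one good block of length $\asymp\varepsilon|I|/k$, paying a scale factor $\asymp k/\varepsilon$ for a density gain of only $1+1/k$. Your heuristic fix cannot work as stated: a gain of order $\varepsilon$ per step with shrink factor $O(1/\varepsilon)$ would give a bound independent of $k$, contradicting the lower bound $e^{\Omega(k\log(1/\delta))}$.

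The missing idea is a porosity (mass-decrement) argument in which the scale drops by only a constant factor per step.
\begin{itemize}
\item In an interval $I$, test the single pattern of $k$ targets spaced $|I|/(3k)$ in the middle third of $I$, with windows of radius $\varepsilon|I|/(18k)$. Any choice of one point per window is a $(1+\varepsilon)$-copy.
\item So if there is no copy, some window misses $\mathcal{D}$. Removing it splits $I$ into two intervals, each of length roughly $|I|/3$ or more.
\item Recurse on \emph{all} pieces simultaneously for $\log_3(N/3k)$ generations. The union covering $\mathcal{D}$ loses a fraction $\gtrsim\varepsilon/k$ of its mass each generation.
\item Hence $\delta N\le(1-\varepsilon/(18k))^{\log_3(N/3k)}N$, i.e.\ $\delta\lesssim N^{-c\varepsilon/k}$, which gives $\log N\lesssim k\varepsilon^{-1}\log(1/\delta)$.
\end{itemize}

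Your lower-bound construction fails, and the issue is not a delicate case near powers of $b$. Suppose at some level $j$ the allowed digits contain $a,a+1,\dots,a+k-1$. Then for every $x\in\mathcal{D}$ with $j$-th digit $a$, the points $x+ib^j$ for $0\le i<k$ all lie in $\mathcal{D}$, since no carries occur. So $\mathcal{D}$ contains an exact $k$-term progression. With $b\asymp\delta^{-c}$ and allowed fraction $\delta^{1/L}$, each level allows about $\delta^{1/L-c}\ge\delta^{-c/2}$ consecutive digits once $L\ge 2/c$, and this is at least $k$ once $\delta\le k^{-2/c}$. If instead every allowed interval has fewer than $k$ digits, then $\delta\le((k-1)/b)^L$ while $N=b^L$. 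This gives $\log N\le\log(1/\delta)\,\log b/\log(b/(k-1))\le 2\log(1/\delta)$ once $b\ge k^2$, far below $k\log(1/\delta)$. A large base with $\asymp k$ levels therefore cannot work: consecutive levels are a factor $b$ apart in scale, and in between $\mathcal{D}$ looks full.

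What is needed is a constant scale ratio and $\asymp\log N$ generations, each removing a relative fraction $\asymp 1/k$.
\begin{itemize}
\item The paper removes from every current interval $I$ its middle subinterval of length $(4/k)|I|$. This gives $\mu(\mathcal{D})=(1-4/k)^{\Theta(\log N)}=N^{-\Theta(1/k)}$, i.e.\ $N=e^{\Theta(k\log(1/\delta))}$.
\item A crossing lemma replaces your case analysis. A map $f:[k]\to I^0\cup I^1$ meeting both halves has a consecutive pair at distance $>4|I|/k$.
\item Meanwhile, the half containing at least $k/2$ image points has, by pigeonhole over balls of radius $|I|/k$, two image points within $2|I|/k$. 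So $f$ has distortion $>2\ge 1+\varepsilon$.
\item Iterating confines $f([k])$ to a single final piece too small to contain $k$ points.
\end{itemize}
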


The upper bound is a strengthening of Theorem 6 of Dumitrescu \cite{dumitrescu2010approximate} who showed $Path(1+\varepsilon,\delta,k)  = e^{O_\varepsilon(k \log k \log(1/\delta))}$ (see also Proposition 3.7 of Rödl--Sales \cite{rodl2022blurred} for a different proof and  \cite{fraser2018arithmetic,fraser2021approximate,rodl2022blurred} for related results).

\subsection{Metric embeddings of trees into dense subsets of trees}\label{subSect-relaxedFurstenberg-Weiss}
\;

For each $N \in \n$ consider the graph of the complete binary tree of depth $N$. We will denote it by $Tree(N)$, and represent each vertex as a string $w$ of $0$'s and $1$'s of length $l(w) \in \{0,...,N-1\}$. 
The shortest path metric on the vertices of this graph is a \textbf{tree metric} which we denote by $d_T$.
Following \cite{furstenberg2003markov, pach2011remarks}, we define a probability measure on the vertices of $Tree(N)$ by
$$\mu(\Dcal):= \dfrac{1}{N}\sum_{w \in \Dcal} 2^{-l(w)}\;\;\;\text{for all }\Dcal \subset Tree(N).$$
Fix $k \in \n, 0 < \delta < 1, \varepsilon \geq 0$. \underline{\textbf{We denote by $Tree(1+\varepsilon,\delta,k)$}} the smallest $N$ such that for every subset $\Dcal \subset Tree(N)$ with $\mu(\Dcal) \geq \delta$ there exists a $(1+\varepsilon)$ bi-Lipschitz map $f: Tree(k) \to Tree(N)$ with $f(Tree(N)) \subset D$.
The Furstenberg--Weiss theorem \cite{furstenberg2003markov} states that $Tree(1, \delta, k)<\infty$ for all $k \in \n, 0 < \delta < 1$, while Pach--Solymosi--Tardos \cite{pach2011remarks} give quantitative bounds on $Tree(1, \delta, k)$ using the bounds for Szemerédi's theorem.
The following theorem concerns the $(1+\varepsilon)$-distorted setting:

\begin{thmB}[Metric embeddings of trees into dense subsets of trees]\label{thm:treeRamsey}
\;\\For all $k\geq 3,\; 0 < \delta < 1,\; 0 < \varepsilon <0.001$, we have:
\[e^{\Omega(k \log(1/\delta))} \leq Tree(1+\varepsilon,\delta,k) \leq e^{O( k \varepsilon^{-1} \log(1/\delta))}.\]
\end{thmB}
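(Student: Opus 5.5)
The plan is to deduce both bounds from the path result, Theorem~\ref{thm:pathSpaces}, by passing through level sets of the tree. The bridge is the tree replicas theorem of Pach--Solymosi--Tardos \cite{pach2011remarks}.

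\textbf{Upper bound.} Let $\Dcal\subset Tree(N)$ with $\mu(\Dcal)\ge\delta$, and for a leaf-path (a branch) $\gamma$ from the root to depth $N-1$ set $A_\gamma=\{l(w): w\in\gamma\cap\Dcal\}\subset\{0,\dots,N-1\}$. By the definition of $\mu$, if $\gamma$ is a uniformly random branch then $\e|A_\gamma|/N=\mu(\Dcal)\ge\delta$. Hence the set $L=\{\ell: \text{the fraction of vertices at level } \ell \text{ lying in } \Dcal \text{ is }\ge\delta/2\}$ satisfies $|L|\ge \delta N/2$.

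The replicas theorem of \cite{pach2011remarks} states the following. Suppose the level sets $\Dcal_\ell$ have relative density at least $\delta/2$ for each $\ell$ in a set of levels $\ell_1<\dots<\ell_m$. Then, after a density-increment/averaging step over subtrees, one can find levels $\ell_{i_1}<\dots<\ell_{i_k}$ forming a pattern in $L$ and a copy of $Tree(k)$ whose depth-$j$ vertices lie in $\Dcal$ at level $\ell_{i_j}$. Equivalently, one passes to a subset $L'$ of levels of density $\gtrsim\delta$ along a fixed branching structure.

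I would apply Theorem~\ref{thm:pathSpaces} to $L$ (density $\delta/2$ in $[N]$). This yields a $(1+\varepsilon)$-bi-Lipschitz image of $[k]$, namely levels $\ell_1<\dots<\ell_k$ whose gaps $g_j=\ell_{j+1}-\ell_j$ lie in $[r,(1+\varepsilon)r]$. Then I would embed $Tree(k)$ by sending a depth-$j$ vertex to a depth-$\ell_j$ vertex. Children go to descendants at depth $\ell_{j+1}$, chosen in distinct subtrees so that the branching happens immediately below the parent's image. Tree distances then become sums of gaps, each in $[r,(1+\varepsilon)r]$, so the map is $(1+O(\varepsilon))$-bi-Lipschitz. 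Here the condition $\varepsilon<0.001$ absorbs the constant incurred because the branch point sits one step below the image; this is negligible once $r\ge 1/\varepsilon$, which one can force by running the path theorem with slightly smaller $\varepsilon$.

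To guarantee that both children of each embedded vertex lie in $\Dcal$, I would use the replicas theorem's averaging, which produces a density version of the following statement: for $\delta'$-dense level sets, a random embedded copy of $Tree(k)$ over prescribed levels has all its vertices in $\Dcal$ with positive probability. The $\log(1/\delta)$ loss per level merges into the exponent, giving $N=e^{O(k\varepsilon^{-1}\log(1/\delta))}$.

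\textbf{Main obstacle.} The hardest step is this last one: the replicas theorem gives the \emph{same} level pattern for all branches, and I must check that its density hypothesis is satisfied in a form compatible with applying the path bound only once. I would first try to use it as a black box reducing to "a dense set of levels" and then invoke Theorem~\ref{thm:pathSpaces}.

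\textbf{Lower bound.} Take $A\subset[N]$ with $|A|\ge\delta N$ containing no $(1+\varepsilon)$-copy of $[k]$, supplied by the lower bound in Theorem~\ref{thm:pathSpaces} (up to a constant factor in $\varepsilon$). Let $\Dcal$ be the union of all levels $\ell\in A$, so that $\mu(\Dcal)=|A|/N\ge\delta$. A $(1+\varepsilon)$-bi-Lipschitz copy of $Tree(k)$ contains a root-to-leaf geodesic of $k$ points. Since $\varepsilon$ is tiny, its image is nearly geodesic, hence nearly monotone in level; the level map then gives a $(1+O(\varepsilon))$-copy of a path of length $\gtrsim k$ inside $A$, which is a contradiction.
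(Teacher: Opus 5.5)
The upper bound, as written, has a genuine gap. You apply Theorem~\ref{thm:pathSpaces} to the set $L$ of levels on which $\Dcal$ has relative density at least $\delta/2$. You then claim that a copy of $Tree(k)$ can be placed in $\Dcal$ on the prescribed levels $\ell_1<\dots<\ell_k$ because each of those levels is dense. Level densities say nothing about how $\Dcal$ on one level sits above $\Dcal$ on another, and the claim is false. Let $\Dcal$ consist of all $w$ with $l(w)\ge 1$ whose first letter equals $l(w)\bmod 2$. Every level $\ell\ge 1$ has relative density $1/2$, so $L$ is essentially all of $[N]$. But a vertex and one of its descendants can both lie in $\Dcal$ only if their levels have the same parity. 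So a level pattern with an odd gap, which the path theorem applied to $L$ is free to return, carries no copy of even $Tree(2)$.

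You also misstate the Pach--Solymosi--Tardos theorem. Its only hypothesis is $\mu(\Dcal)\ge\delta$, and its conclusion is a tree replica $f:Tree(N')\to\Dcal$ with $N'\ge\lfloor h^{-1}(\delta)N\rfloor$, whose levels are dictated by $\Dcal$ rather than chosen from $L$. Your aside about a dense set $L'$ of levels ``along a fixed branching structure'' is the right object. The paper's proof is your plan with the order reversed:
\begin{enumerate}
\item extract the replica first;
\item let $\Lambda\subset[N]$ be the set of its $N'$ levels, of density roughly $h^{-1}(\delta)\ge\delta^2/8$;
\item apply Theorem~\ref{thm:pathSpaces} to $\Lambda$;
\item restrict $f$ to the $k$ selected levels of $Tree(N')$ by precomposing with a level-selecting replica $Tree(k)\to Tree(N')$.
\end{enumerate}
A replica branches immediately below each image vertex, so least common ancestors are preserved and image distances are exactly sums of level gaps. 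The restricted map is therefore $(1+\varepsilon)$-bi-Lipschitz with no one-step offset to absorb. The density loss $\delta\mapsto h^{-1}(\delta)$ is paid once, before the path theorem is invoked, not level by level as your final step suggests. That is what keeps the exponent at $O(k\varepsilon^{-1}\log(1/\delta))$.

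Your lower bound follows the paper's route: take $\Dcal$ to be the union of the levels in the Cantor set of Section~\ref{sec:pathSpaces}, look at the image of a root-to-leaf path of $Tree(k)$, and pass to levels. However, the step ``nearly geodesic, hence nearly monotone in level'' is wrong. A geodesic in a tree climbs to the least common ancestor of its endpoints and then descends. Low-distortion (even rescaled isometric) copies of $Tree(k)$ can send a root-to-leaf path up and then down, since the image of the root need not be the highest image point. What is true, and what the paper proves using the tripod medians of consecutive triples $f(i),f(i+1),f(i+2)$, is this: each $f(i)$ lies within $O(\varepsilon r)$ of a point $g(i)$, and the $g(i)$ appear in order along the geodesic $\gamma$ from $f(1)$ to $f(k)$. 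Keep the ascending or descending part of $\gamma$ that carries at least $k/2$ of these points. On it, the level map is a $(1+O(\varepsilon))$-, hence $2$-, bi-Lipschitz copy of a path on $k/2$ points inside the Cantor set (built with $k/2$ in place of $k$). That construction excludes such copies at any distortion $\le 2$. This gives $e^{\Omega(k\log(1/\delta))}$ uniformly in $\varepsilon$, so no adjustment of $\varepsilon$ by a constant factor is needed.
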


\subsection{Large subsets of the Hamming cube embed poorly into spaces of non-positive curvature}
The subsequent geometric application of Theorem \ref{thm:MAIN} was proposed to us by A.~Naor \cite{naor2022personal}. Following \cite{BartalLinialMendelNaor2005}, given a class of metric spaces $\Mcal$, a finite metric space $(X,d)$, and $\alpha>1$ write 
\[R_{\Mcal}(X,\alpha) := \sup\{|\Dcal| : \Dcal \subset X\;\;\text{and}\;\; \exists M \in \Mcal \;\;\text{s.t.}\;\; c_M(\Dcal) \leq \alpha\}\]
where $c_M(\Dcal)$ denotes the infimal bi-Lipschitz distortion over all functions $f: \Dcal \to M$.
See \cite{bartal2006ramsey,BartalLinialMendelNaor2005,mendel2007ramsey,mendel2013ultrametricHausdorff,mendel2013ultrametric} for motivations for studying this parameter from the perspective of geometry (nonlinear Dvoretzky theorems, geometric measure theory, probability) and computer science (online algorithms, data structures).

Let $\mathrm{POS}$ denote the class of metric spaces of \textbf{non-negative Alexandrov curvature},  and let $\mathrm{CAT}(0)$ denote the class of metric spaces of \textbf{non-positive Alexandrov curvature} (the relevant standard definitions can be found for example in the monograph \cite{bridson2013metric}).
Bartal--Linial--Mendel--Naor \cite{BartalLinialMendelNaor2005} showed that
\begin{equation}\label{eq:quote BLMN}
    2^{N(1-\frac{C \log \alpha}{\alpha^2})} \leq R_{\mathrm{POS}}(Q_N,\alpha)\leq C 2^{N(1-\frac{c}{\alpha^2})}\;\;\;\text{for all }\alpha>1,N \in \n.
\end{equation}
where $C,c>0$ are universal constants.
The works of Gromov \cite{gromov2001cat,gromov2003random} and Kondo \cite{kondo2012cat} imply that the method of \cite{BartalLinialMendelNaor2005} for the upper bound fails when $\mathrm{POS}$ is replaced by $\mathrm{CAT}(0)$ , and Naor asked if one could prove an upper bound on the size of subsets of $Q_N$ which embed into $\mathrm{CAT}(0)$ spaces.
In Section \ref{sec:BMWTypeApplication} we prove
\begin{equation}\label{eq:RamseyForNonPositiveCurvature}
    R_{\mathrm{CAT}(0)}(Q_N,\alpha) \leq 2^{N(1-C/\alpha^4)}\;\;\;\text{for all }\alpha>1,\;\text{and}\;N\geq c \alpha^6,
\end{equation}
where $C,c>0$ are universal constants.

More generally, the upper bound in \cite{BartalLinialMendelNaor2005} restricts the embeddability into any space of nontrivial \textbf{Markov type}
\footnote{The notion of \textbf{Markov type} was introduced by Ball in~\cite{ball1992markov}. We will also use below the classical notion of \textbf{BMW type}
which is a variant of Enflo type~\cite{enflo1978infinite} 
introduced by Bourgain, Milman and Wolfson in~\cite{bourgain1986type}.
Both BMW type and Markov type are important  bi-Lipschitz invariants with multifaceted applications, but we do not need to recall their definitions here; see e.~g.~the survey~\cite{naor2012introduction}.
Theorem \ref{thm:applicationToBMWType} also works with the Enflo type constant instead of the BMW type constant, but we do not know how to prove the analogous statement of Proposition \ref{prop::union_of_subspaces_and_metric_type} for Enflo type.
}.
This implies the second inequality in \eqref{eq:quote BLMN} because Ohta proved \cite{ohta2009markov} that every metric space in $\mathrm{POS}$ has Markov type $2$. 
On the other hand, Gromov \cite{gromov2001cat,gromov2003random} and Kondo \cite{kondo2012cat} constructed metric spaces in $\mathrm{CAT}(0)$ that fail to have any nontrivial Markov type, hence the method of \cite{BartalLinialMendelNaor2005} does not apply to the setting of \eqref{eq:RamseyForNonPositiveCurvature}.
Here we prove:

\begin{thmB}[Large subsets of the Hamming cube cannot embed into spaces with BMW type]\label{thm:applicationToBMWType}
    \;\\There are universal constants $c,c'>0$ such that 
    \\ for any $\alpha>1$, $N \geq c \alpha^6$, $p>1$, and any metric space $(M,d)$, 
    \\\textbf{If} $\Dcal \subset Q_N$ admits $f: \Dcal \to (M,d)$ with bi-Lipschitz distortion $\leq\alpha$
    \\\textbf{then} the size of $\Dcal$ is small in the following sense:
    \[\log_2 |\Dcal|\leq N\left(1-\frac{c'}{(T_p(M)\;\alpha)^{2p/(p-1)}}\right)\]
    where $T_p(M)$ denotes the BMW type $p$ constant of $(M,d)$.
\end{thmB}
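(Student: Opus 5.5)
The plan is to combine the bi-Lipschitz upper bound of Theorem~\ref{thm:MAIN} with the BMW type inequality applied to a distorted copy of a small cube. Fix $\varepsilon = 1/8$, and set $T := T_p(M)$ and $\delta := |\Dcal|/2^N$. Suppose $f:\Dcal\to M$ has distortion $\le\alpha$, normalized so that $\|x-y\|_1\le d(f(x),f(y))\le \alpha\|x-y\|_1$. If $\Dcal$ is large enough, the embedding theorem provides a $(1+\varepsilon)$-bi-Lipschitz map $g:Q_k\to Q_N$ with $g(Q_k)\subset\Dcal$, for a value of $k$ chosen below. Then $h:=f\circ g: Q_k\to M$ satisfies
\[
r\|x-y\|_1\le d(h(x),h(y))\le (1+\varepsilon)\alpha r\|x-y\|_1 .
\]

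Next I apply the type inequality to $h$. Each of the $k$ edge directions contributes $\mathbb{E}\, d(h(x),h(x+e_j))^p\le ((1+\varepsilon)\alpha r)^p$. Every diagonal satisfies $d(h(x),h(x+\mathbf 1))\ge rk$. The BMW/Enflo-type inequality, which bounds the averaged $p$-th power of diagonals by $T^p$ times the sum over directions of the averaged $p$-th power of edges, then gives
\[
(rk)^p\le T^p\, k\,\bigl((1+\varepsilon)\alpha r\bigr)^p .
\]
Hence $k^{p-1}\le ((1+\varepsilon)T\alpha)^p$, that is, $k\le ((1+\varepsilon)T\alpha)^{p/(p-1)}$. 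So if I take
\[
k:=\bigl\lfloor 2\,((1+\varepsilon)T\alpha)^{p/(p-1)}\bigr\rfloor ,
\]
no such $g$ can exist. By the definition of $\Emb$, this forces $N<\Emb(1+\varepsilon,\delta,k)$.

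The main obstacle is converting this into the exponent $2p/(p-1)$ rather than $3p/(p-1)$. Used as stated, \eqref{eq:MAIN_biLipschitz_bound} gives $\log(1/\delta)\gtrsim N/k^3$, i.e.
\[
\log_2|\Dcal|\le N\bigl(1-c'(T\alpha)^{-3p/(p-1)}\bigr),
\]
which is weaker than claimed. I would first revisit the proof of the upper bound in Theorem~\ref{thm:MAIN} to extract the sharper form
\[
\Emb(1+\varepsilon,\delta,k)\lesssim k^3+k^2\varepsilon^{-2}\log(1/\delta).
\]
I expect the $k^3$ to come only from an additive term needed to make concentration or union-bound steps over the $2^k$ vertices work, while the density loss costs only $k^2\log(1/\delta)$. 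Granting this, the failure of the embedding yields $N\lesssim k^3+k^2\log(1/\delta)$. The hypothesis $N\ge c\alpha^6$ is exactly what absorbs the additive $k^3$ term when $k\asymp\alpha^2$ (the case $p=2$, $T\asymp1$), giving $\log(1/\delta)\gtrsim N/k^2$. Thus
\[
\log_2|\Dcal|=N-\log_2(1/\delta)\le N\bigl(1-c'(T\alpha)^{-2p/(p-1)}\bigr).
\]

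In the remaining regime, where $k^3$ could exceed $N$ (large $T$ or $p$ near $1$), I would check that the claimed bound holds trivially or by monotonicity. Decreasing $c'$ allows taking $k$ smaller, and if $c'(T\alpha)^{-2p/(p-1)}$ is already tiny compared with $1/N^{2/3}$, I would instead run the argument with $k\approx N^{1/3}$. I expect this bookkeeping, together with extracting the refined $k^2\log(1/\delta)$ dependence from the main theorem's proof, to be where the real work lies.
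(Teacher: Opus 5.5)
Your main line is the paper's: put a bounded-distortion copy of $Q_k$ with $k\asymp(T_p(M)\alpha)^{p/(p-1)}$ inside $\Dcal$, then apply Enflo's bound $c_M(Q_k)\ge k^{1-1/p}/T_p(M)$. The sharpened estimate you ``grant'' is exactly Theorem~\ref{thm:variation_of_MAIN}. But that sharpening is the crux, and you leave it as an expectation.

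The missing ingredient is isoperimetric. By Harper's inequality (Lemma~\ref{lemm:HarperIsoperimetry} with \eqref{eq:TailBoundForBMW}), $|\Dcal|\ge 2^{(1-\gamma)N}$ already forces $\mu(\Dcal_{\varepsilon_3})\ge 1/2$ for $\varepsilon_3\asymp\sqrt{\gamma}$. Choosing $k\asymp\gamma^{-1/2}$ makes $\varepsilon_3\lesssim 1/k$. Theorem~\ref{thm:epsilon_distortion} is then invoked only at density $1/2$, which is the sole place $N\gtrsim k^3$ is needed. The proposition ``small block error and small perturbation error yield small distortion'' absorbs the total perturbation $\varepsilon_2+\varepsilon_3\lesssim 1/k$.

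Equivalently, the proof of Theorem~\ref{thm:epsilon_distortion} uses its hypothesis on $N$ in two separate ways. It needs $\log(1/\delta)\lesssim \varepsilon^2N/k^2$, to apply Lemma~\ref{lem:measure_of_epsilon_neighborhood_of_set_is_huge} at radius $\varepsilon_2\asymp\varepsilon/k$. Separately it needs $\varepsilon^2 N\gtrsim k^3$. So $\varepsilon^{-2}$ should multiply both terms of your refined bound, which is harmless at $\varepsilon=1/8$.

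The $k^3$ does not come from a union bound over vertices. It comes from Proposition~\ref{prop:block_k_subspaces}: its threshold $\delta_k$ lies within about $2^{-k}$ of $1$, which forces $\varepsilon_2^2N\gtrsim k$ at scale $\varepsilon_2\asymp 1/k$.

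The genuine failure is your plan for the ``remaining regime''. With $k\approx N^{1/3}$, Enflo's bound only gives $c_M(\Dcal)\gtrsim N^{(p-1)/(3p)}/T_p(M)$. That is below $\alpha$ precisely when $N\lesssim (T_p(M)\alpha)^{3p/(p-1)}$, so there is no contradiction.

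No argument can work there, because the inequality is false in that regime. Take $M=\Dcal=Q_N$, $f=\mathrm{id}$, $p=2$, $\alpha=2$ and $N\ge 64c$. For a finite metric space the type-$2$ constant is at most its diameter divided by its minimal distance: bound $d(h(x),h(x+\mathbf 1))^2$ by $\mathrm{diam}^2$ times the indicator that $h(x)\neq h(x+\mathbf 1)$, and take a union bound along a path from $x$ to $x+\mathbf 1$. Hence $T_2(Q_N)\le N<\infty$, and the conclusion would read $N\le N\bigl(1-c'/(2T_2(Q_N))^4\bigr)<N$.

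What the paper's argument actually yields is the bound under the hypothesis $N\ge c\,(T_p(M)\alpha)^{3p/(p-1)}$. This comes from Theorem~\ref{thm:OLDNOTATIONapplicationToBMWType}, whose hypothesis is $N\ge c\gamma^{-3/2}$, with $\gamma\asymp (T_p(M)\alpha)^{-2p/(p-1)}$. It coincides with $N\ge c\alpha^6$ only when $p=2$ and $T_2(M)=O(1)$, e.g.\ CAT(0) spaces with $T_2=1$, which is the case needed for \eqref{eq:RamseyForNonPositiveCurvature}. So the paper's proof does not cover your remaining regime either. The right fix is to strengthen the hypothesis on $N$ as above, not to try to close that regime.
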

Inequality (\ref{eq:RamseyForNonPositiveCurvature}) follows from Theorem \ref{thm:applicationToBMWType} and the fact that any metric space $(M,d)$ of non-positive Alexandrov curvature has BMW type $2$ constant $T_2(M)=1$ (the proof essentially goes back to Enflo \cite{enflo1970nonexistence}, see e.g. \cite{ohta2009markov}).

\subsection{Union problems for metric type and universality}
\;

Next, we describe some applications of the Hales--Jewett theorem in metric geometry.
(The applications are essentially qualitative, so we do not need the fine quantitative bounds of Theorem \ref{thm:MAIN}.)
\\\textbf{Union problems} in metric geometry are problems of the following form:

\textit{If a metric space $(M,d)$ is expressed as the union of two subspaces $A,B \subset M$, i.e. $M = A \cup B$, and both $A$ and $B$ have a certain metric property, then does $M$ also have this metric property?}

The union problem has been answered affirmatively when the metric property is 
coarse embeddability into Hilbert space \cite{DadarlatGuentner+2007+1+15}, 
bi-Lipschitz embeddability into ultrametrics \cite{mendel2013ultrametric},
bi-Lipschitz embeddability into Hilbert space \cite{MakarychevMakarychev},
asymptotic dimension \cite{bell2001asymptotic}, 
and finite decomposition complexity \cite{guentner2013discrete}.
\footnote{The union problem for bi-Lipschitz embeddability into $L_1$ remains a well-known open problem.}
See also \cite{OstrovskiiRandrianantoanina+2022+313+329} for related results and a  discussion on union problems.

Given a metric space $(M,d)$, denote by $p_M$ the supremal $p\geq 1$ such that $M$ has BMW type $p$.
Whenever $p_M>1$ we say that $M$ has \textbf{nontrivial BMW type}.

\begin{propB}[The union problem for nontrivial BMW type] \label{prop::union_of_subspaces_and_metric_type}
    \;\\\textbf{If} $(M,d)$ is a metric space with $M = A \cup B$ for some $A,B \subset M$
    \textbf{then}
    $$p_{A}>1\;\;\text{and}\;\;p_{B}>1 \implies p_M > 1.$$
\end{propB}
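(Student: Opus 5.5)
The plan is to argue by contrapositive, through the Bourgain--Milman--Wolfson dichotomy: a metric space $X$ has trivial BMW type ($p_X=1$) if and only if it contains the Hamming cubes $Q_k$ with distortion arbitrarily close to $1$, uniformly in $k$. Precisely, $p_X=1$ if and only if for every $k\in\n$ and every $\varepsilon>0$ there is a $(1+\varepsilon)$-bi-Lipschitz map $Q_k\to X$. So we assume $p_M=1$ and aim to show that $p_A=1$ or $p_B=1$. By the dichotomy it suffices to show that one of $A,B$ contains $(1+\varepsilon)$-distorted copies of $Q_k$ for every $k$ and every $\varepsilon>0$.

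Fix $k$ and $\varepsilon>0$. Let $N=\Emb(1+\varepsilon,1/2,k)$, which is finite by Theorem \ref{thm:MAIN}. Since $p_M=1$, there is a $(1+\varepsilon)$-bi-Lipschitz map $g:Q_N\to M$. Colour each $x\in Q_N$ by whether $g(x)\in A$; if not, then $g(x)\in B$. One of the two colour classes $\Dcal$ has $\mu(\Dcal)\ge 1/2$. By the definition of $\Emb(1+\varepsilon,1/2,k)$ there is then a $(1+\varepsilon)$-bi-Lipschitz map $f:Q_k\to Q_N$ with $f(Q_k)\subset\Dcal$.

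The composition $g\circ f:Q_k\to M$ lands entirely in $A$ or entirely in $B$, and its distortion is at most $(1+\varepsilon)^2$, since rescalings multiply. Running this over the sequence of pairs $(k,\varepsilon)=(m,1/m)$, $m\in\n$, one of $A,B$ receives infinitely many of the resulting maps; say it is $A$. Because $Q_{k'}$ embeds isometrically into $Q_k$ for $k'\le k$, and smaller $\varepsilon$ only helps, $A$ then contains $(1+\varepsilon)$-copies of $Q_k$ for all $k$ and all $\varepsilon$. Hence $p_A=1$, which contradicts the hypothesis.

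The only delicate step is quoting the BMW dichotomy in the correct form. We need the direction stating that trivial type forces $Q_k$ to embed with distortion tending to $1$; this is the Bourgain--Milman--Wolfson theorem, and it is here that "nontrivial BMW type" is essential, which is why the statement is phrased for BMW type rather than Enflo type. The converse direction is immediate, because $Q_k$ fails type $p>1$ with constant growing in $k$. The density input is only the qualitative finiteness of $\Emb$, so a pigeonhole colouring argument suffices and no quantitative bound is needed.
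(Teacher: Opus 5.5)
Your proof is correct. Its skeleton matches the paper's:
\begin{enumerate}
\item assume $p_M=1$;
\item use the Bourgain--Milman--Wolfson characterization to place a bi-Lipschitz copy of a large cube $Q_N$ in $M$;
\item colour $Q_N$ by whether the image lies in $A$ or $B$, and extract a monochromatic copy of $Q_k$;
\item pigeonhole over $k$, and finish with Enflo's lower bound $k^{1-1/p}/T_p$ on the distortion of cubes.
\end{enumerate}

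The genuine difference is the Ramsey input. The paper uses the colouring Hales--Jewett theorem (Lemma~\ref{application_of_Hales--Jewett}(1)). That yields a monochromatic \emph{undistorted} copy of $Q_k$, so the paper only needs a $2$-distorted copy of $Q_n$ in $M$, and it ends with uniformly $2$-distorted cubes in $A$.

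You instead apply the density statement $\Emb(1+\varepsilon,1/2,k)<\infty$ to a colour class of measure at least $1/2$. This costs an extra factor $1+\varepsilon$ in distortion, which is harmless, and you correctly send $\varepsilon\to 0$ along your sequence. In return, the argument stays inside the paper's own results and comes with an explicit dimension $N=O(\varepsilon^{-2}k^3)$. That gives a quantitative form of the splitting: a $(1+\varepsilon)$-copy of $Q_N$ in $M=A\cup B$ forces a $(1+\varepsilon)^2$-copy of $Q_k$ in $A$ or in $B$. Hales--Jewett gives nothing comparable quantitatively.

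The paper's choice has the advantage that one colouring lemma also covers the $\ell_\infty$-grids used for the universality result.

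One cosmetic point: Theorem~\ref{thm:MAIN} is stated only for $\varepsilon<1/4$. For $m\le 4$ you should either start your sequence at $m=5$, or note that $\Emb(1+\varepsilon,\delta,k)$ is non-increasing in $\varepsilon$, or appeal to Remark~\ref{usingDensityHalesJewett}.
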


By an iterative application of Proposition \ref{prop::union_of_subspaces_and_metric_type}, the same result holds for finite unions.

Using BMW type, we also address the following natural question:
\textit{Can we decompose every metric space into a non-positive curvature subspace and a non-negative curvature subspace?}
The answer is no, even when we allow bi-Lipschitz perturbations of each of the two pieces.

\begin{propB}[Failure to decompose into non-positive and non-negative curvature]\label{prop:decomposition_into_positive_and_negative}
    \;\\For any $D>1$, there exists a metric space $(M,d)$ (namely the Hamming cube $Q_N$ for some $N \in \n$) which cannot split as the union of subsets $M = A \cup B$ such that 
    \\--$A$ embeds into a metric space of non-negative Alexandrov curvature with distortion $\leq D$ and
    \\--$B$ embeds into a metric space of non-positive Alexandrov curvature with distortion $\leq D$.
\end{propB}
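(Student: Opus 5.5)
We argue by contradiction and combine a Ramsey-type pigeonhole with the fact that Hamming cubes of large dimension cannot embed well into spaces with nontrivial BMW type. Fix $D>1$. Suppose that for every $N$ there is a splitting $Q_N = A\cup B$ in which $A$ embeds with distortion $\le D$ into some $M_A\in\mathrm{POS}$ and $B$ embeds with distortion $\le D$ into some $M_B\in\mathrm{CAT}(0)$. One of the two pieces has density at least $1/2$ in $Q_N$. The plan is to show that neither piece can be this large once $N$ is big enough.

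\emph{Case 1: $\mu(A)\ge 1/2$.} By the upper bound in \eqref{eq:quote BLMN}, a subset of $Q_N$ that embeds into a space of non-negative Alexandrov curvature with distortion $\le D$ has size at most $C2^{N(1-c/D^2)}$. For $N$ large (depending on $D$) this is smaller than $2^{N-1}$, which is a contradiction.

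\emph{Case 2: $\mu(B)\ge 1/2$.} We apply Theorem \ref{thm:applicationToBMWType} with $p=2$ and $\alpha=D$, using that every $\mathrm{CAT}(0)$ space has BMW type $2$ constant $T_2=1$, as recorded after that theorem. For $N\ge cD^6$ this gives $\log_2|B|\le N(1-c'/D^4)$, which is again smaller than $N-1$ once $N$ is large in terms of $D$. Equivalently, one can use \eqref{eq:RamseyForNonPositiveCurvature} directly.

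Choosing $N$ larger than both thresholds, of order $\max(cD^6,\,C''D^4)$ together with the constant from the POS bound, rules out both cases, so $Q_N$ admits no such splitting.

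A cleaner alternative, closer in spirit to the union proposition, avoids the quantitative density bounds. One would take $\Dcal$ to be whichever of $A,B$ is dense, apply Theorem \ref{thm:MAIN} (or density Hales--Jewett) to find an undistorted copy of $Q_k$ inside $\Dcal$, and then observe that $Q_k$ embeds with distortion $\le D$ into a space of BMW type $p>1$ with constant $T$ only if $k^{1-1/p}\lesssim TD$. Both POS spaces (which have Markov type $2$, hence BMW type $2$ with a bounded constant) and $\mathrm{CAT}(0)$ spaces have BMW type $2$ with a uniform constant, so the argument treats both pieces symmetrically. In either version, the only step needing care is checking that the constants are uniform over all spaces in each curvature class: for POS via Ohta's theorem, and for $\mathrm{CAT}(0)$ via Enflo's argument giving $T_2=1$. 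Once that uniformity is in place, the rest is routine.
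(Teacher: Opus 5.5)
Your main argument is correct, but it takes a genuinely different route from the paper's.

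The paper's argument is qualitative and uses colorings. It chooses $k$ with $\sqrt{k}/(1+\sqrt{2})>D$ and takes $n$ from part (1) of Lemma \ref{application_of_Hales--Jewett}. It colors $Q_n$ by $A$/$B$ and extracts a monochromatic rescaled isometric copy of $Q_k$. It then rules out both colors at once with Enflo's bound $\mathrm{dist}\ge k^{1/2}/T_2$, using $T_2\le 1+\sqrt{2}$ for POS (Ohta) and $T_2=1$ for $\mathrm{CAT}(0)$.

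You instead pass to the piece of density $\ge 1/2$ and invoke two quantitative size bounds: \eqref{eq:quote BLMN} for POS and Theorem \ref{thm:applicationToBMWType} for $\mathrm{CAT}(0)$. Your thresholds check out: $N>D^2\log_2(2C)/c$ in Case 1, and $N\ge cD^6$, $N>D^4/c'$ in Case 2.

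What your route buys is an explicit polynomial threshold $N=O(D^6)$. The paper's $N$ is a Hales--Jewett number for an alphabet of size $2^k$ with $k\asymp D^2$, which is astronomically large. What your route costs is that it rests on the heaviest results available. Theorem \ref{thm:applicationToBMWType} goes through Harper's inequality, Theorem \ref{thm:variation_of_MAIN} and the block-copy machinery of Section \ref{sec:UpperBoundEpsilonDistorted}. The paper's proof needs only the coloring Hales--Jewett theorem and the cube distortion lower bound.

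BLMN is also unnecessary in your Case 1. POS spaces have BMW type $2$ with a uniform constant, so Theorem \ref{thm:applicationToBMWType} alone handles both pieces symmetrically.

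Your ``cleaner alternative'' is essentially the paper's proof, with density (Theorem \ref{thm:MAIN} or density Hales--Jewett) in place of the coloring lemma. That substitution is legitimate, since the density statement with $\delta=1/2$ implies the $2$-coloring one. If you use the $(1+\varepsilon)$-bi-Lipschitz part of Theorem \ref{thm:MAIN} there, the composite distortion is $(1+\varepsilon)D$. This recovers the same $O(D^6)$ threshold with much less machinery than your main route.
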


For the next result, we call a metric space $M$ \textbf{universal} if for any $\varepsilon>0$, every finite metric space embeds into $M$ with bi-Lipschitz distortion $< 1 + \varepsilon$.
Note that this version of universality coincides with  trivial metric cotype\footnote{The notion of \textbf{metric cotype} was introduced by M. Mendel and A. Naor \cite{mendel2008metric} and is an important bi-Lipschitz invariant.
We do not need to recall the definition here, because we will not use it; see e.~g.~the survey~\cite{naor2012introduction}.}.

\begin{propB}[A universal metric space cannot be split into two non-universal subsets]\label{prop::union_of_subspaces_and_metric_cotype}
    \;\\\textbf{If} $(M,d)$ is a metric space with $M = A \cup B$ for some $A,B \subset M$
    and $M$ is universal,
    \\\textbf{then} either $A$ is universal or else $B$ is universal.
\end{propB}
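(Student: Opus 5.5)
We argue by contradiction. Suppose $M = A \cup B$ is universal but neither $A$ nor $B$ is. Then there are $\varepsilon_A,\varepsilon_B>0$ and finite metric spaces $X_A, X_B$ such that $X_A$ does not embed into $A$ with distortion $<1+\varepsilon_A$, and $X_B$ does not embed into $B$ with distortion $<1+\varepsilon_B$. Set $\varepsilon=\min(\varepsilon_A,\varepsilon_B)$.

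\textbf{Step 1: a single test space.} We replace $X_A$ and $X_B$ by one finite metric space $X$ that embeds isometrically into both, for instance the $\ell_\infty$-sum $X_A\oplus_\infty X_B$. Restricting embeddings to each factor shows that $X$ embeds with distortion $<1+\varepsilon$ into neither $A$ nor $B$. Next, every finite metric space embeds isometrically into $\ell_\infty^m$ for $m=|X|$ (Fr\'echet). If $X$ has rational distances, a rescaled copy of $X$ embeds isometrically into a Hamming cube. In general, Bourgain's / Kwapie\'n-type arguments only give $\ell_1$ for $\ell_1$-metrics, so the cube route is not the right one. Instead we work with \emph{combinatorial subspaces}, i.e.\ Hales--Jewett lines, in a product grid such as $[n]^m$ with the $\ell_\infty$ metric. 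We approximate $X$ to within distortion $1+\varepsilon/3$ by a subset of a grid $\{0,1,\dots,L\}^m\subset \ell_\infty^m$.

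\textbf{Step 2: Hales--Jewett coloring.} Let $W$ be a large $\ell_\infty$-grid $\{0,\dots,L\}^{mN}$, viewed as $(\{0,\dots,L\}^m)^N$ with metric $\max$ over coordinates, or better as a product where each ``letter'' is a point of the alphabet $\Sigma=\{0,\dots,L\}^m$. By universality, $W$ embeds into $M$ via some $g$ with distortion $<1+\varepsilon/3$. Color $w\in W$ red if $g(w)\in A$ and blue otherwise (so that blue points lie in $B$). By the Hales--Jewett theorem for alphabet $\Sigma$ and $2$ colors, for $N$ large there is a monochromatic combinatorial line $\{w(\sigma):\sigma\in\Sigma\}$. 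The key metric check is that the map $\sigma\mapsto w(\sigma)$ is an isometry from $(\Sigma,\ell_\infty)$ into $W$ with the $\ell_\infty$-over-blocks metric: the fixed coordinates cancel, and each moving block contributes exactly $\|\sigma-\sigma'\|_\infty$, so the maximum is unchanged. (With an $\ell_1$-type sum one would instead get a rescaling by the number of moving coordinates, which is also harmless.) Composing with $g$ and with the approximation from Step 1 embeds $X$ into $A$ or into $B$ with distortion $<(1+\varepsilon/3)^2<1+\varepsilon$, a contradiction.

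\textbf{Main obstacle.} The delicate point is ensuring that the line map is genuinely distortion-free, which requires choosing the metric on $W$ as a max or sum over blocks so that constant coordinates contribute nothing. One must also check that the approximation of an arbitrary finite metric by a grid subset in $\ell_\infty^m$ is fine enough: scaling $X$ so that its minimum distance is large compared with the grid spacing, rounding each coordinate of the Fr\'echet embedding changes distances by at most $1$, giving distortion $1+O(1/L)$. Once these are in place, the Hales--Jewett step is purely qualitative and standard.
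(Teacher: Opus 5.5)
Your argument is correct and is essentially the paper's proof. You embed a large $\ell_\infty$-grid into $M$, colour it by membership in $A$, extract a monochromatic Hales--Jewett line over the alphabet $\{0,\dots,L\}^m$ (an isometric copy of that grid), and finish with the Fr\'echet embedding plus rounding. Your contradiction set-up with the single test space $X_A\oplus_\infty X_B$ replaces the paper's pigeonholing over pairs $(m,k)$, and it also avoids the paper's implicit dependence of the choice of $A$ versus $B$ on the fixed $\varepsilon$.

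You should delete the stray aside that rational-distance metrics embed isometrically, after rescaling, into Hamming cubes. It is false (the $K_{2,3}$ graph metric is not even $\ell_1$-embeddable), though you never use it.
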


The proofs of Propositions \ref{prop::union_of_subspaces_and_metric_type}, \ref{prop:decomposition_into_positive_and_negative}, and \ref{prop::union_of_subspaces_and_metric_cotype} are in Section \ref{sec:remarksProofs}.
Propositions \ref{prop::union_of_subspaces_and_metric_type} and \ref{prop::union_of_subspaces_and_metric_cotype} are examples of the following (vague) general statement:
if a metric property admits a test space characterization (see Chapter 9 in  \cite{ostrovskii2013metric} for the definition), and there exists a $2$-coloring Ramsey theorem for this family of test spaces (e.g. the $2$-coloring counterpart of Question \ref{question:vague_direction}), then this metric property is preserved by taking finite unions of subsets.

\subsection{Paper Organization}
Section \ref{sec:preliminaries} sets up some general notation for the proofs that follow.
In Section \ref{sec:UpperBoundBoundedRescaling} we prove the upper bound for undistorted Hamming cubes of bounded rescaling.
In Section \ref{sec:UpperBoundEpsilonDistorted} we prove the upper bound for $(1+\varepsilon)$-distorted Hamming cubes.
In Section \ref{sec:lowerBounds} we prove the lower bounds for embedding Hamming cubes.
In Section \ref{sec:pathSpaces} we discuss metric embeddings of path spaces, and in Section \ref{sec:treesRamsey} we discuss metric embeddings of tree metrics.
In Section \ref{sec:BMWTypeApplication} we prove the application regarding $CAT(0)$ spaces.
In Section \ref{sec:remarksProofs} we discuss union problems and prove remarks made in the introduction.
Finally, in Section \ref{sec:conclusion}, we list some questions and directions for further study.

\subsection*{Acknowledgements}
We thank Pandelis Dodos for several fruitful discussions regarding the problem.
We thank Assaf Naor for the application regarding non-positive curvature and its context.
Cosmas Kravaris thanks the Hausdorff Research Institute for Mathematics for their hospitality and giving him the opportunity to meet Pandelis Dodos during the Workshop on Analysis and Geometry on Discrete Spaces on October 7-11, 2024.

\section{\textbf{General notation and conventions}}\label{sec:preliminaries}

We denote by $Q_N=\{0,1\}^N$ the $N$-dimensional hypercube equipped with the Hamming metric
\[ d_N(x,y) = \norm{x-y}_1 = \abs*{\Set*{i\in[N]: x_i\neq y_i}},\;\;\;\text{for all }x,y\in Q_N.\]
Given any set $X$ by $\abs{X}$ we denote its cardinality. For every $n\in\mathbb{N}$, we set $[n]\coloneqq \{1,\dots,n\}$, and  we will identify subsets of $[n]$ with vectors in $ Q_n$ which we equip with the Hamming distance, i.e. for $x,y\in Q_n$ their distance is given by $\norm{x-y}=\abs*{x\bigtriangleup y}$. Given any non-negative integer $k$ we define $\binom{[n]}{k}\coloneqq\{x\in Q_n: |x|=k\}$; moreover, for every $t\meg 0$, we set $[n]^{\mik t}\coloneqq \{x\in Q_n: |x|\mik t\}$, and similarly we define the sets $[n]^{<t}, [n]^{\meg t}, [n]^{>t}$. 

Throughout the paper the symbol $\mu$ denotes the uniform probability measure corresponding to the ambient space under consideration which will always be clear from context, with the binary tree $Tree_N$ on $N$ levels being the only exception (the measure $\mu$ for the tree was given in subsection \ref{subSect-relaxedFurstenberg-Weiss}). Finally we write $\log$ for logarithm in base $2$, and we use $\ln$ for the natural logarithm, while the symbols $\prob$ and $\ave$ denote probability and expectation with respect to auxiliary probability spaces.

Recall that the \textbf{bi-Lipschitz distortion} of an injective map between two metric spaces $f: (M_1,d_1) \to (M_2,d_2)$ is defined to be:
\[distortion(f) := \left(\sup_{x_1\neq x_2, x_1,x_2 \in M_1} \dfrac{d_1(x_1,x_2)}{d_2(f(x_1),f(x_2))}\right) \cdot \left(\sup_{x_1\neq x_2, x_1,x_2 \in M_1} \dfrac{d_2(f(x_1),f(x_2))}{d_1(x_1,x_2)}\right).\]
We write $c_{M_2}(M_1)$ for the infimal distortion over all injective functions $f: M_1 \to M_2$.
\\Whenever $M_2 = L_2$ is the Hilbert space, we write $c_2(M_1):= c_{M_2}(M_1)$.

\section{\textbf{Upper bound for the bounded rescaling undistorted copies of cubes}}\label{sec:UpperBoundBoundedRescaling}

The map which we find and sends $Q_k$ inside $\Dcal \subset Q_N$ has a very special form.
Suppose $N = nk$ and write $Q_N = \prod_{i=1}^kQ_n$ meaning that each string $x \in Q_N$ is expressed as the concatenation of $k$ strings of length $n$, $x = (x^{1},x^2,...,x^k)$.
Call the image of an embedding $f: Q_k \to \{0,1\}^{nk}$ an \textbf{equitable block copy} whenever there exist $x_0^1,x_1^1,x_0^2,x_1^2,...,x_0^k,x_1^k \in Q_n$ such that 
$$||x^1_0 - x^1_1||_1 = ||x^2_0 - x^2_1||_1 = \cdots =||x^k_0 - x^k_1||_1$$
and the embedding $f$ is given by: $f(\alpha) = (x_{\alpha_1}^1, x_{\alpha_2}^2, ..., x_{\alpha_k}^k)$ for all $\alpha \in Q_k$.
It is clear that an equitable block copy is a rescaled isometric embedding of $Q_k$ into $Q_N$ with rescaling $r:=||x^1_0 - x^1_1||_1$. We will need the following definition.

\begin{defnB}\label{defn:section_on_element}
Let $n,N\in\mathbb{N}$ and $\Dcal\subset Q_{n+N}$. For every $x\in Q_n$ the \textbf{section} of $\Dcal$ on $x$ is the set:
\begin{equation}\label{eq:section_on_element}
\Dcal_{x}\coloneqq\Set{y\in Q_N:(x,y)\in\Dcal}.
\end{equation}
\end{defnB} 

In order to find the pairs of strings $(x_0^1,x_1^1),(x_0^2,x_1^2),...,(x_0^k,x_1^k)$ we use induction. The goal of the base case of the induction is to find a pair $(x_0^1,x_1^1) \in Q_n \times Q_n$ such that $||x_0^1-x_1^1||_1 = r$ and the density $\mu(\Dcal_{x_0^1} \cap \Dcal_{x_1^1})$ is as large as possible. Lemma \ref{lem:existence_of_pair_with_big_intersection} gives sufficient conditions for a family of events in a probability space to contain a pair whose intersection has  high probability. In Lemma \ref{lem:for_the_inductive_step} which is the heart of the proof, we first find a layer of $Q_n$ near the middle for which the expectation of the size of the sections is large. Then for each subset of $[n]$ in the layer just below, we consider all of its one-element extensions, and by a double counting argument, we find a subset where the expectation over all of its one-element extensions is also high. This allows us to apply Lemma \ref{lem:existence_of_pair_with_big_intersection} and achieve the goal mentioned above. The rest of the proof is a straightforward induction scheme. 

\begin{lemB}\label{lem:existence_of_pair_with_big_intersection}
Let $(A_i)_{i\in[m]}$ be events in a probability space $(\Omega,\Fcal,\mu)$ with $\frac{1}{m}\sum_{i=1}^m \mu(A_i)\meg \delta$.
\\\textbf{If} $m> 2/\delta$, \textbf{then} there exist distinct $i,j$ such that
\[
\mu(A_i\cap A_j) > \frac{\delta^{2}}{2}.
\]
\end{lemB}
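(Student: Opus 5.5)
We argue by the standard second-moment (Cauchy--Schwarz) double counting. Set $X := \sum_{i=1}^m \mathbf{1}_{A_i}$, a nonnegative random variable on $(\Omega,\Fcal,\mu)$. By hypothesis its mean is
\[
\int X\,d\mu = \sum_i \mu(A_i) \meg \delta m .
\]
By Cauchy--Schwarz (equivalently Jensen),
\[
\int X^2\,d\mu \meg \Bigl(\int X\,d\mu\Bigr)^2 \meg \delta^2 m^2 .
\]

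Next we expand the second moment:
\[
\int X^2\,d\mu = \sum_{i} \mu(A_i) + \sum_{i\neq j}\mu(A_i\cap A_j),
\]
where the second sum runs over the $m(m-1)$ ordered pairs of distinct indices. The diagonal term is at most $m$. Combining this with the previous paragraph gives
\[
\sum_{i\neq j}\mu(A_i\cap A_j) \meg \delta^2 m^2 - m .
\]
Averaging over the $m(m-1)$ ordered pairs, some pair $i\neq j$ satisfies
\[
\mu(A_i\cap A_j)\meg \frac{\delta^2 m^2 - m}{m(m-1)} = \frac{\delta^2 m - 1}{m-1}.
\]

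It remains to check that $\frac{\delta^2 m-1}{m-1} > \frac{\delta^2}{2}$ when $m>2/\delta$. Clearing the positive denominator, the inequality is equivalent to
\[
2\delta^2 m - 2 > \delta^2 m - \delta^2, \qquad\text{i.e.}\qquad \delta^2 m > 2-\delta^2 .
\]
This is the one delicate point. The hypothesis $m>2/\delta$ only gives $\delta^2 m > 2\delta$, which suffices exactly when $2\delta \meg 2-\delta^2$, so the bare second-moment bound above does not by itself close the argument for small $\delta$.

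To repair this, I would use a sharper diagonal estimate. The diagonal term is exactly $\sum_i\mu(A_i) = \int X\,d\mu =: S$, not merely at most $m$. This yields
\[
\sum_{i\neq j}\mu(A_i\cap A_j) \meg S^2 - S ,
\]
and $S^2 - S$ is increasing in $S$ once $S\meg 1$, which holds since $S\meg \delta m>2$. Hence
\[
\sum_{i\neq j}\mu(A_i\cap A_j) \meg \delta m(\delta m-1) > \tfrac{1}{2}\delta^2 m^2 ,
\]
because $\delta m - 1 > \tfrac{\delta m}{2}$ is equivalent to $\delta m>2$. Dividing by the number of ordered pairs $m(m-1)<m^2$ gives an average strictly greater than $\delta^2/2$. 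Some pair $i\neq j$ must therefore satisfy $\mu(A_i\cap A_j)>\delta^2/2$, which proves the lemma. Handling the diagonal term through $S$ rather than through the crude bound $m$ is the only step requiring care.
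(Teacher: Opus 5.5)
Your proof is correct and, once you discard the crude diagonal bound, it is essentially the paper's argument. Both apply Jensen to $X=\sum_i \mathbf{1}_{A_i}$, keep the exact diagonal term $\sum_i\mu(A_i)$ to get $\sum_{i\neq j}\mu(A_i\cap A_j)\geq \delta m(\delta m-1)$, and average over the $m(m-1)$ ordered pairs. The only cosmetic difference is the final estimate: the paper uses monotonicity of $g(x)=(x-1)/(x-\delta)$ to get $g(m\delta)>g(2)>1/2$, while you use $\delta m-1>\delta m/2$ together with $m(m-1)<m^2$; both are valid.
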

\begin{proof}
From Jensen's inequality,
\[
\mathbb{E}\Bigl[\Bigl(\sum_{i=1}^m \mathbf{1}_{A_i}\Bigr)^{2}\Bigr]
   \meg \Bigl(\mathbb{E}\sum_{i=1}^m \mathbf{1}_{A_i}\Bigr)^{2}
   = \Bigl(\sum_{i=1}^m\mu(A_i)\Bigr)^{2}
   \meg (m\delta)^{2}.
\]
Expanding the square,
\[
\mathbb{E}\Bigl[\Bigl(\sum_{i=1}^m \mathbf{1}_{A_i}\Bigr)^{2}\Bigr]
   = \sum_{i=1}^m\mu(A_i) + \sum_{i\neq j}\mu(A_i\cap A_j).
\]
Hence
\[
\sum_{i\neq j}\mu(A_i\cap A_j)
   \meg \left(\sum_{i=1}^m\mu(A_i)\right)^{2} - \sum_{i=1}^m\mu(A_i)
   \meg m\delta(m\delta - 1).
\]
The average over the $m(m-1)$ unordered pairs is therefore
\[
\frac{1}{m(m-1)}\sum_{i\neq j}\mu(A_i\cap A_j)
   \meg \frac{m\delta(m\delta - 1)}{m(m-1)}
   = \delta^{2}\,\frac{m\delta-1}{m\delta-\delta}.
\]
The function $g(x)=(x-1)/(x-\delta)$ is increasing for $x>\delta$.  Since $m\delta>2$ by hypothesis, $g(m\delta)\meg g(2)=(2-1)/(2-\delta)>1/2$.  Consequently the average exceeds $\delta^{2}/2$, so some specific pair must satisfy $\mu(A_i\cap A_j)>\delta^{2}/2$.
\end{proof}


\begin{lemB}[the inductive step]\label{lem:for_the_inductive_step}
Let $n,N\in\mathbb{N}$, $\delta\in(0,1)$ and $\Dcal\subseteq Q_n\times Q_N$ with $\mu(\Dcal)\meg\delta$\;\\\textbf{If} we have
\[
n \meg \max\Bigl\{8\ln\!\Bigl(\frac{4}{\delta}\Bigr),\;\frac{16}{\delta}\Bigr\},
\]
\textbf{then} there exist $x_0,x_1\in Q_n$ with $\norm{x_0-x_1}_1=2$ such that
\[
\mu\bigl(\Dcal_{x_0}\cap\Dcal_{x_1}\bigr) \meg \frac{\delta^{2}}{8}.
\]
\end{lemB}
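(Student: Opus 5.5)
The plan follows the outline given before Lemma \ref{lem:existence_of_pair_with_big_intersection}. Set $f(x)=\mu(\Dcal_x)$ for $x\in Q_n$, so that $\ave_{x\in Q_n} f(x)=\mu(\Dcal)\meg\delta$.

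\textbf{Step 1: find a good middle layer.} I would first show there is a layer $\binom{[n]}{\ell}$ with $\ell$ close to $n/2$ on which the average of $f$ is at least $\delta/2$. By Hoeffding's inequality, the binomial mass of the layers with $|\ell-n/2|>\sqrt{n\ln(4/\delta)/2}$ is at most $2e^{-2t^2/n}=\delta/2$. The hypothesis $n\meg 8\ln(4/\delta)$ ensures this window lies well inside $[1,n-1]$, and in fact inside $[n/4,3n/4]$. The layers outside the window contribute at most $\delta/2$ to $\ave f$. So the layers inside the window, which carry total binomial weight at most $1$, have $f$-weighted average at least $\delta/2$. Hence some layer $\ell$ in the window, with $\ell\meg 1$, satisfies $\ave_{x\in\binom{[n]}{\ell}}f(x)\meg\delta/2$.

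\textbf{Step 2: double counting.} Consider the bipartite graph between $\binom{[n]}{\ell-1}$ and $\binom{[n]}{\ell}$ given by inclusion. It is biregular: each $z$ in the lower layer has $n-\ell+1$ upper neighbours $z\cup\{i\}$, and each $x$ in the upper layer has $\ell$ lower neighbours. Summing $f$ over the edges therefore gives
\[
\ave_{z\in\binom{[n]}{\ell-1}}\ \ave_{i\notin z} f(z\cup\{i\}) \;=\; \ave_{x\in\binom{[n]}{\ell}} f(x)\;\meg\;\delta/2 .
\]
So some $z$ has average $\meg\delta/2$ over its $m:=n-\ell+1$ one-element extensions.

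\textbf{Step 3: apply Lemma \ref{lem:existence_of_pair_with_big_intersection}.} Take the events $A_i=\Dcal_{z\cup\{i\}}$ for $i\notin z$, which have average measure $\meg\delta/2$. Lemma \ref{lem:existence_of_pair_with_big_intersection} applies with $\delta/2$ in place of $\delta$ provided $m>4/\delta$. Since $\ell\mik 3n/4$ from Step 1, we get $m\meg n/4\meg 4/\delta$, which is exactly where the hypothesis $n\meg 16/\delta$ enters. A little slack is needed to make the inequality strict; this can be handled by keeping $\ell$ strictly below $3n/4$ or by a slightly finer window estimate. The lemma then gives $i\neq j$ with $\mu(A_i\cap A_j)>(\delta/2)^2/2=\delta^2/8$. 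Set $x_0=z\cup\{i\}$ and $x_1=z\cup\{j\}$; these satisfy $\norm{x_0-x_1}_1=2$.

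\textbf{Main obstacle.} The delicate part is Step 1. One has to check that the concentration window and the constants $8\ln(4/\delta)$ and $16/\delta$ mesh so that the chosen layer is both nonempty from below ($\ell\meg 1$) and leaves enough room above ($n-\ell+1>4/\delta$). If the Hoeffding window is too wide for this, the fallback is cruder: use only the upper tail and pick the layer with $\ell\mik n/2+\sqrt{n\ln(4/\delta)/2}$. This already gives $m\gtrsim n/2-O(\sqrt{n\ln(4/\delta)})$, which suffices under the stated hypotheses.
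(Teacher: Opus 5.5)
Your proof is correct and follows the paper's argument: Hoeffding to find a layer in $[n/4,3n/4]$ with average section density at least $\delta/2$ (your $\delta$-adapted window of width $\sqrt{n\ln(4/\delta)/2}$ sits inside the paper's fixed window $n/4$ exactly when $n\meg 8\ln(4/\delta)$). This is followed by double counting over the $(\ell-1)$-sets and then Lemma~\ref{lem:existence_of_pair_with_big_intersection} applied with $\delta/2$. Your worry about strictness in Step 3 is unnecessary: $\ell\mik 3n/4$ and $n\meg 16/\delta$ already give $m=n-\ell+1\meg n/4+1>4/\delta$.
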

\begin{proof}
Let $\mu$ denote uniform measure on $ Q_n$ as well.  By Hoeffding's bound,
\[
\mu\Bigl(\bigl\{x: \bigl|\abs{x}-\tfrac{n}{2}\bigr|\meg \tfrac{n}{4}\bigr\}\Bigr)
   = 2\tailsize{1/2}{n}
   \mik 2\exp\!\Bigl(-\frac{n}{8}\Bigr).
\]
Our assumption on $n$ guarantees $2\exp(-n/8)\mik \delta/2$.  Therefore the average section density on the ``middle strip'' $\{x:|\abs{x}-n/2|<n/4\}$ satisfies
\[
\ave_{|\abs{x}-n/2|<n/4}\mu(\Dcal_x)
   \meg \frac{\delta-(\delta/2)\cdot1}{1-\delta/2}
   > \frac{\delta}{2}.
\]
Consequently there exists a layer $\binom{[n]}{j}$ (with $j$ between $n/4$ and $3n/4$) such that
\[
\ave_{x\in\binom{[n]}{j}}\mu(\Dcal_x) > \frac{\delta}{2}.
\]

Fix such a $j$.  For each $(j-1)$-set $y$ let
\[
\Ccal_y\coloneqq\bigl\{x\in\tbinom{[n]}{j}: y\subset x\bigr\}.
\]
Every $x\in\binom{[n]}{j}$ contains exactly $j$ different $(j-1)$-subsets, so
\[
\sum_{x\in\binom{[n]}{j}}\mu(\Dcal_x)
   = \frac{1}{j}\sum_{y\in\binom{[n]}{j-1}}
        \sum_{x\in\Ccal_y}\mu(\Dcal_x).
\]
Dividing by $\binom{n}{j}$ gives
\[
\ave_{x\in\binom{[n]}{j}}\mu(\Dcal_x)
   = \ave_{y\in\binom{[n]}{j-1}}
        \ave_{x\in\Ccal_y}\mu(\Dcal_x).
\]
Hence there exists a $(j-1)$-set $y$ for which
\[
\ave_{x\in\Ccal_y}\mu(\Dcal_x) > \frac{\delta}{2}.
\]

Now $\Ccal_y$ consists of all $j$-sets containing $y$; its size is $n-j+1\ge n/4$.  Moreover, any two distinct elements $x,x'\in\Ccal_y$ satisfy $\norm{x-x'}=2$ (they differ in exactly one element outside $y$).  Because $n/4 > 2/(\delta/2)=4/\delta$ by the hypothesis on $n$, Lemma~\ref{lem:existence_of_pair_with_big_intersection} applied to the family $\{\Dcal_x: x\in\Ccal_y\}$ with density $\delta/2$ yields $x_0,x_1\in\Ccal_y$ with
\[
\mu(\Dcal_{x_0}\cap\Dcal_{x_1}) > \frac{(\delta/2)^{2}}{2}= \frac{\delta^{2}}{8},
\]
and $\norm{x_0-x_1}=2$ as required.
\end{proof}

We now iterate Lemma~\ref{lem:for_the_inductive_step} to build an undistorted copy of $Q_k$ in any large enough $\Dcal$.

\begin{thmB}\label{thm:upper_bound_for_Emb(1,delta,k)}
Define $\gamma(\delta)\coloneqq \max\Bigl\{8\ln\Bigl(\frac{4}{\delta}\Bigr),\;\frac{16}{\delta}\Bigr\}$.  For $k\ge1$ and $\delta\in(0,1)$ set
\[
N(k,\delta)\coloneqq 2+\sum_{i=1}^{k}
                     \gamma\!\Bigl(\frac{\delta^{2^{i-1}}}{8^{\,2^{i-1}-1}}\Bigr).
\]
Every $\Dcal\subseteq Q_{N(k,\delta)}$ with $\mu(\Dcal)>\delta$ contains an isometric copy of $Q_k$ with rescaling $2$.  Consequently,
\[
\Emb(1,\delta,k) \mik N(k,\delta)
                \mik 2^{\,2^{k}\log(8/\delta)}.
\]
\end{thmB}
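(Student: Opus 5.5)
We build an equitable block copy with rescaling $2$ by peeling off blocks one coordinate group at a time, using Lemma~\ref{lem:for_the_inductive_step} at each stage. Set $\delta_1:=\delta$ and $\delta_{i+1}:=\delta_i^2/8$. By induction, $\delta_i=\delta^{2^{i-1}}/8^{2^{i-1}-1}$. Write $n_i:=\lceil\gamma(\delta_i)\rceil$; rounding is absorbed by the additive constant in $N(k,\delta)$, and any leftover coordinates are placed in a final block that is not used. Split $Q_{N}=Q_{n_1}\times Q_{n_2}\times\cdots\times Q_{n_k}\times Q_{\mathrm{rest}}$.

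\textbf{Inductive construction.} Step $1$ views $\Dcal\subseteq Q_{n_1}\times Q_{N-n_1}$ with $\mu(\Dcal)\geq\delta_1$. Since $n_1\geq\gamma(\delta_1)$, Lemma~\ref{lem:for_the_inductive_step} gives $x_0^1,x_1^1\in Q_{n_1}$ at distance $2$ such that $\Dcal^{(1)}:=\Dcal_{x_0^1}\cap\Dcal_{x_1^1}\subseteq Q_{N-n_1}$ has density at least $\delta_2$.

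Inductively, suppose we have pairs $(x_0^j,x_1^j)$ for $j\le i$, each at distance $2$, together with a set $\Dcal^{(i)}\subseteq Q_{N-n_1-\cdots-n_i}$ of density at least $\delta_{i+1}$. This set has the property that $(x^1_{\alpha_1},\dots,x^i_{\alpha_i},z)\in\Dcal$ for every $\alpha\in Q_i$ and every $z\in\Dcal^{(i)}$. Apply the lemma to $\Dcal^{(i)}\subseteq Q_{n_{i+1}}\times Q_{\mathrm{remaining}}$ and set $\Dcal^{(i+1)}:=\Dcal^{(i)}_{x_0^{i+1}}\cap\Dcal^{(i)}_{x_1^{i+1}}$. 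The invariant is preserved, because $z\in\Dcal^{(i+1)}$ means that both $(x_0^{i+1},z)$ and $(x_1^{i+1},z)$ lie in $\Dcal^{(i)}$.

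\textbf{Conclusion of the construction.} After $k$ steps, $\Dcal^{(k)}$ has positive density, so it is nonempty; this is also where the strict inequality $\mu(\Dcal)>\delta$ is harmless. If the remaining dimension is $0$, interpret $Q_0$ as a single point of measure $1$. Pick any $z\in\Dcal^{(k)}$ and define $f(\alpha)=(x^1_{\alpha_1},\dots,x^k_{\alpha_k},z)$. Then
\[
\|f(\alpha)-f(\beta)\|_1=\sum_{i:\alpha_i\ne\beta_i}\|x_0^i-x_1^i\|_1=2\|\alpha-\beta\|_1,
\]
so $f$ is an isometric copy of $Q_k$ with rescaling $2$, and $f(Q_k)\subseteq\Dcal$.

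\textbf{Numerical bound.} Since $\delta_i\le 1$, we have $\gamma(\delta_i)\le 16/\delta_i$ up to the $\ln$ term; indeed $8\ln(4/x)\le 16/x$ for $x\le1$, so $\gamma(x)=16/x$. Hence
\[
N(k,\delta)\le 2+16\sum_{i=1}^k\frac{8^{2^{i-1}-1}}{\delta^{2^{i-1}}}\le 2+16\sum_{i=1}^k\Bigl(\frac8\delta\Bigr)^{2^{i-1}}.
\]
The sum is dominated by its last term, since the terms grow doubly exponentially with ratio at least $8$. So $N\le 2+32\,(8/\delta)^{2^{k-1}}$. For $k\ge 2$ this is at most $(8/\delta)^{2^k}=2^{2^k\log(8/\delta)}$, because $(8/\delta)^{2^{k-1}}\ge 64$ absorbs the constant $32$ and the additive $2$. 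The case $k=1$ is checked directly: $N\le 2+16\cdot 8/\delta\le 64/\delta^2$.

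\textbf{Main obstacle.} The only delicate point is bookkeeping: the density entering each step must be $\delta_i$ and not something smaller, and the final arithmetic must absorb constants and ceilings. The combinatorial content is entirely carried by Lemma~\ref{lem:for_the_inductive_step}.
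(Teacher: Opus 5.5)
Your argument is the paper's proof unrolled. The paper runs a formal induction on $k$: it uses $N(k,\delta)-\gamma(\delta)=N(k-1,\delta^2/8)$ and applies the inductive hypothesis to $\Dcal_{x_0}\cap\Dcal_{x_1}$ with density $\delta^2/8$. That is exactly your peeling with the invariant on $\Dcal^{(i)}$, and your final estimate is the same domination-by-the-last-term computation.

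There are two bookkeeping slips to fix.

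First, your $k=1$ check claims $2+16\cdot 8/\delta\mik 64/\delta^2$. This is false for $\delta$ above roughly $1/2$; for example, at $\delta=0.9$ the left side is about $144$ and the right side about $79$. You lost a factor $8$ earlier, since $16\cdot 8^{2^{i-1}-1}\delta^{-2^{i-1}}$ equals $2(8/\delta)^{2^{i-1}}$, not merely something at most $16(8/\delta)^{2^{i-1}}$. Using the exact value $N(1,\delta)=2+16/\delta$, the needed inequality $2+16/\delta\mik 64/\delta^2$ does hold.

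Second, the ceilings $\lceil\gamma(\delta_i)\rceil$ can cost almost $k$ in total, which the additive constant $2$ does not absorb once $k\meg 3$. This is harmless. The proof of Lemma~\ref{lem:for_the_inductive_step} only uses $2e^{-n/8}\mik\delta/2$ and $n-j+1>4/\delta$ with $j<3n/4$. Both still hold for $n=\lfloor\gamma(\delta)\rfloor=\lfloor 16/\delta\rfloor\meg 16/\delta-1$, so you can take floors instead. The paper's own proof simply ignores integrality.
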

\begin{proof}
We prove by induction on $k$ that if $N\meg N(k,\delta)$ and $\mu(\Dcal)>\delta$, then $\Dcal$ contains an isometric copy of $Q_k$ with rescaling $2$.

\paragraph{Base $k=1$.}  Take $n_1=\gamma(\delta)$.  Since $N(k,\delta)\meg n_1+2$, we can write $Q_N = Q_{n_1}\times Q_{N-n_1}$ with $N-n_1\meg2$.  Apply Lemma~\ref{lem:for_the_inductive_step} to $\Dcal$ viewed as a subset of $Q_{n_1}\times Q_{N-n_1}$; the lemma gives $x_0,x_1\in Q_{n_1}$ with $\norm{x_0-x_1}=2$ and $\mu(\Dcal_{x_0}\cap\Dcal_{x_1})>\delta^{2}/8>0$.  Hence there exists $y\in Q_{N-n_1}$ such that $(x_0,y),(x_1,y)\in\Dcal$.  Mapping $0\mapsto (x_0,y)$, $1\mapsto (x_1,y)$ provides the desired isometric embedding of $Q_1$.

\paragraph{Inductive step.}  Assume the statement holds for $k-1$.  Set $n_1=\gamma(\delta)$ and note that
\[
N(k,\delta)-n_1 = 2+\sum_{i=2}^{k}\gamma\!\Bigl(\frac{\delta^{2^{i-1}}}{8^{\,2^{i-1}-1}}\Bigr)
                = N\!\Bigl(k-1,\frac{\delta^{2}}{8}\Bigr).
\]
Write $Q_N = Q_{n_1}\times Q_{N-n_1}$ with $N\meg N(k,\delta)$.  By Lemma~\ref{lem:for_the_inductive_step} applied to $\Dcal\subseteq Q_{n_1}\times Q_{N-n_1}$, there exist $x_0,x_1\in Q_{n_1}$ with $\norm{x_0-x_1}=2$ and
\[
\mu\bigl(\Dcal_{x_0}\cap\Dcal_{x_1}\bigr) > \frac{\delta^{2}}{8}.
\]
Define $\widetilde{\Dcal}\coloneqq \Dcal_{x_0}\cap\Dcal_{x_1}\subseteq Q_{N-n_1}$.  Since $N-n_1\meg N(k-1,\delta^{2}/8)$, the inductive hypothesis applied to $\widetilde{\Dcal}$ with density parameter $\delta^{2}/8$ yields an isometric embedding $g:Q_{k-1}\to\widetilde{\Dcal}$ satisfying $\norm{g(\beta)-g(\beta')}=2\norm{\beta-\beta'}$.

Now define $f:Q_k\to\Dcal$ by
\[
f(\alpha_1,\beta) \coloneqq 
\begin{cases}
(x_0,g(\beta)) & \text{if }\alpha_1=0,\\[2pt]
(x_1,g(\beta)) & \text{if }\alpha_1=1,
\end{cases}
\qquad (\beta\in Q_{k-1}).
\]
Both $(x_0,g(\beta))$ and $(x_1,g(\beta))$ belong to $\Dcal$ because $g(\beta)\in\widetilde{\Dcal}$.  For two vectors $\alpha=(\alpha_1,\beta)$, $\alpha'=(\alpha_1',\beta')$ we have
\[||f(\alpha)-f(\alpha')||
   = ||x_{\alpha_1}-x_{\alpha_1'}|| + ||g(\beta)-g(\beta')||
   = 2|\alpha_1-\alpha_1'| + 2||\beta-\beta'||
   = 2||\alpha-\alpha'||,\]
Hence $\norm{f(\alpha)-f(\alpha')}=2\norm{\alpha-\alpha'}$, so $f$ is an isometric embedding of $Q_k$ with rescaling $2$.

Finally, we get a bound on $N(k,\delta)$.
For $\delta\le1$ we have $\gamma(\delta)\mik 16/\delta+8\ln(4/\delta)\mik 32/\delta$.  Then
\[
N(k,\delta)\mik
2+32\sum_{i=1}^{k}\frac{8^{\,2^{i-1}-1}}{\delta^{\,2^{i-1}}}
=2+4\sum_{i=1}^{k}\left(\frac{8}{\delta}\right)^{2^{i-1}}
\mik 2+\frac{32}{7}\left(\frac{8}{\delta}\right)^{2^{k-1}}
\mik \left(\frac{8}{\delta}\right)^{2^{k}}
=2^{\,2^{k}\log(8/\delta)}.
\]
This gives the claimed double‑exponential upper bound.
\end{proof}

\section{\textbf{Upper bound for $(1+\varepsilon)$-distorted copies of cubes}}\label{sec:UpperBoundEpsilonDistorted}

The key difference between $(1+\varepsilon)$-distorted and undistorted copies of Hamming cubes is that the former can be infinitesimally perturbed and still maintain the same geometric structure. The proof for the upper bound on $\Emb(1+\varepsilon, \delta, k)$ has two stages. The first stage follows an induction scheme similar to the one we used for the undistorted copies but this time we exploit the flexibility we mentioned earlier to find a first exponential upper bound that guarantees \textbf{roughly equitable block copies} which are given by $x_0^1,x_1^1,x_0^2,x_1^2,...,x_0^k,x_1^k \in Q_n$ satisfying: 
\[(1-\varepsilon_1)\dfrac{n}{2} \leq ||x^i_0 - x^i_1||_1 \leq (1+\varepsilon_1)\dfrac{n}{2}\;\;\;\text{for each }i=1,...,k.\]

This is the content of Proposition \ref{prop:block_k_subspaces}. We call the parameter $\varepsilon_1$ \textbf{block error}. In the second stage we show that given a \textbf{perturbation error} $\varepsilon_2$, if we can find a roughly equitable block copy with block error $\varepsilon_1$ inside the $\varepsilon_2$-neighborhood of $\Dcal$ (for the definition see below), then we can find an $\varepsilon=\Theta(\varepsilon_1+k\varepsilon_2)$ distorted copy in $\Dcal$. This is the content of Proposition \ref{prop:subspace_in_Dε_then_subspace_in_D_two_eps}. The proof is completed by combining these two results.  

\subsection{More notation and some basic facts about concentration of measure}\label{subSec:concentrationMeasureFacts}
\begin{defn}\label{defn:tailsize}
Let $n\in\mathbb{N}$, $\varepsilon\in[0,1]$ and $\mu$ the uniform probability measure on $Q_n$. We define:
\begin{equation}\label{eq:tailsize}
\tailsize{\varepsilon}{n}\coloneqq \mu\left([n]^{\mik (1-\varepsilon)\frac{n}{2}}\right)=\mu\left([n]^{\meg (1+\varepsilon)\frac{n}{2}}\right).
\end{equation}
\end{defn}

We will use the following rough bound for the tails of the binomial distribution: 
\begin{lemB}[Hoeffding bounds]\label{lem:Hoeffding_bounds_for_tails}
For every $n\in\mathbb{N}$ and $\varepsilon\in[0,1]$ it holds that  
\begin{equation}\label{eq:Hoeffding_bounds_for_uniform_tails}
\tailsize{\varepsilon}{n}\mik \exp\left(-\frac{\varepsilon^2 n}{2}\right).
\end{equation}
\end{lemB}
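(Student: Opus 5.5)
The plan is the standard exponential-moment (Chernoff) argument specialized to a sum of independent uniform bits. By the symmetry $x\mapsto [n]\setminus x$ the two expressions in \eqref{eq:tailsize} agree, so it suffices to bound the upper tail
\[\mu\bigl([n]^{\meg (1+\varepsilon)\frac n2}\bigr).\]
Write $|x|=\sum_{i=1}^n X_i$, where under $\mu$ the $X_i$ are i.i.d.\ uniform on $\{0,1\}$, and set $Y_i:=X_i-\tfrac12\in\{\pm\tfrac12\}$. The event becomes $\sum_i Y_i\meg \varepsilon n/2$.

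For any $\lambda>0$, Markov's inequality applied to $e^{\lambda\sum_i Y_i}$ and independence give
\[\mu\Bigl(\sum_i Y_i\meg \tfrac{\varepsilon n}{2}\Bigr)\mik e^{-\lambda\varepsilon n/2}\,\bigl(\e\, e^{\lambda Y_1}\bigr)^n = e^{-\lambda\varepsilon n/2}\cosh(\lambda/2)^n.\]

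The only nontrivial input is the elementary inequality $\cosh t\mik e^{t^2/2}$. It follows by comparing Taylor series termwise, using $(2m)!\meg 2^m m!$. This is Hoeffding's lemma for a variable in an interval of length $1$.

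With it, the bound becomes $\exp\bigl(-\lambda\varepsilon n/2+\lambda^2 n/8\bigr)$. Optimizing at $\lambda=2\varepsilon$ gives $\exp(-\varepsilon^2 n/2)$, which is exactly \eqref{eq:Hoeffding_bounds_for_uniform_tails}. The cases $\varepsilon=0$ (trivial bound $1$) and $\varepsilon=1$ are covered by the same computation.

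There is no real obstacle here. The one point to check is that the variance-proxy constant is right: a range-$1$ variable gives $\lambda^2/8$, and hence the exponent $2\cdot(\varepsilon n/2)^2/n=\varepsilon^2 n/2$. That is why the paper's statement has $\varepsilon^2 n/2$ rather than a weaker constant.
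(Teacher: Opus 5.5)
Your proof is correct: the complement symmetry, the computation $\mathbb{E}\,e^{\lambda Y_1}=\cosh(\lambda/2)$, the bound $\cosh t\le e^{t^2/2}$ via $(2m)!\ge 2^m m!$, and the choice $\lambda=2\varepsilon$ together give exactly $\exp(-\varepsilon^2 n/2)$. The paper gives no proof of this lemma and only invokes it as the standard Hoeffding bound, so your exponential-moment argument is the standard derivation behind that citation.
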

$\mbox{}$
\begin{defnB}\label{defn:epsilon_neighborhood_of_set}
Let $n\in\mathbb{N}$, $\Dcal\subseteq  Q_n$ and $\varepsilon\in[0,1]$. The \textbf{$\varepsilon$-neighborhood} of $\Dcal$ is the set:
\begin{equation}\label{eq:epsilon_neighborhood_of_set}
\Dcal_{\varepsilon}\coloneqq \{x\in Q_n: \exists x'\in\Dcal \text{ such that } \norm{x-x'}\mik \varepsilon n\}.
\end{equation}
\end{defnB}

\begin{lemB}[Harper's isoperimetric inequality \cite{HARPER1966385}]\label{lemm:HarperIsoperimetry}
    Let $N \in \n$ and let $Q_N$ be the Hamming cube of dimension $N$.
    Denote by $Ball(r) \subset Q_N$ the ball of radius $r\in \n$  centered at $\vec{0}$ (which consists of all indicators of subsets of $[N]$ of size $\leq r$).
    For any subset $\Acal \subset Q_N$ we have
    $$|\Acal| \geq |Ball(r)| \implies |\Acal_{\varepsilon}| > |Ball(r+ \varepsilon N)|\;\;\;\text{for all }\Acal \subset Q_N, N \in \n, 0<\varepsilon<1.$$
\end{lemB}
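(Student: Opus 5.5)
The plan is to reduce the statement to the one-step vertex-isoperimetric inequality and then iterate it. Writing $\Gamma(\Acal):=\{x\in Q_N:\exists x'\in\Acal,\ \norm{x-x'}\mik 1\}$ for the closed $1$-neighborhood, we have $\Acal_\varepsilon=\Gamma^{(t)}(\Acal)$ with $t=\lfloor \varepsilon N\rfloor$. Here $Ball(r+\varepsilon N)$ is read as the ball of radius $r+t$. So it suffices to prove the following claim: if $|\Acal|\meg |Ball(s)|$, then $|\Gamma(\Acal)|\meg |Ball(s+1)|$. Applying it $t$ times then gives $|\Acal_\varepsilon|\meg |Ball(r+t)|$.

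I should flag at the outset that the strict inequality cannot hold in general. When $r+t\meg N$ both sides equal $2^N$, and when $\Acal=Ball(r)$ equality holds exactly. What I would actually prove, and what the later arguments need, is the non-strict version. Strictness can only be recovered in a restricted regime, for instance when $|\Acal|>|Ball(r)|$ and $r+t<N$. In that case the argument below gives $|\Gamma(\Acal)|>|Ball(s+1)|$ at each step, by comparing with an initial segment of the simplicial order that strictly contains the ball.

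For the one-step claim I would use compressions. For $i\in[N]$, the $i$-compression $C_i$ replaces each $x\in\Acal$ with $x_i=1$ by $x-e_i$ whenever $x-e_i\notin\Acal$. This map preserves $|\Acal|$, and the standard case check on a pair $\{y,y\pm e_i\}$ gives $\Gamma(C_i\Acal)\subseteq C_i\Gamma(\Acal)$, so $|\Gamma(C_i\Acal)|\mik|\Gamma(\Acal)|$. Each nontrivial compression strictly decreases $\sum_{x\in\Acal}|x|$, so iterating terminates at a down-set with the same size and no larger neighborhood. Similarly, the compressions $C_{ij}$ that replace $j$ by $i<j$ whenever the result is not already present preserve size and do not increase $|\Gamma|$. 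After applying both kinds we may assume $\Acal$ is a down-set that is stable under all left shifts.

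The main obstacle is the final comparison: showing that among sets with these stabilities, $|\Gamma(\Acal)|$ is at least that of the initial segment of the simplicial order (first by weight, then lexicographically) of the same size. My first attempt would be induction on $N$. Split $\Acal=\Acal^0\cup(\Acal^1+e_N)$ according to the last coordinate. Then use
\[
\Gamma(\Acal)^0=\Gamma(\Acal^0)\cup\Acal^1,\qquad \Gamma(\Acal)^1=\Gamma(\Acal^1)\cup\Acal^0,
\]
and stability gives $\Acal^1\subseteq\Acal^0$, which yields $|\Gamma(\Acal)|\meg|\Gamma(\Acal^0)|+|\Gamma(\Acal^1)|$. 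By induction on $N-1$ dimensions, each term is at least the neighborhood size of the corresponding simplicial initial segment. It then remains to check the numerical fact that the two $(N-1)$-dimensional segments recombine to the $N$-dimensional segment of size $|\Acal|$. This is the standard Harper/Katona bookkeeping, and I expect it to be the most delicate part. Once the claim is proved for all sizes, the case $|\Acal|\meg|Ball(s)|$ follows, since the simplicial segment of that size contains $Ball(s)$ and hence its neighborhood contains $Ball(s+1)$.
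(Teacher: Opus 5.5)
You are right that the strict inequality is false as stated. For $\Acal=Ball(r)$ one has $\Acal_\varepsilon=Ball(r+\lfloor\varepsilon N\rfloor)$, so equality holds. The non-strict version is Harper's theorem, and it is all the paper uses: in the proof of Theorem~\ref{thm:variation_of_MAIN} it only needs $\mu(\Dcal_{\varepsilon_3})\meg 1/2$. The paper gives no argument of its own and simply cites Harper. Your preliminary reductions are standard and correct: iterating the one-step claim, and using that the down-compressions $C_i$ and the shifts $C_{ij}$ preserve $|\Acal|$ and do not increase $|\Gamma(\Acal)|$.

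The gap is in the final comparison, and it is not bookkeeping. Write $S_m(a)$ for the initial segment of size $a$ of the simplicial order on $Q_m$.

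Your bound $|\Gamma(\Acal)|\meg|\Gamma(\Acal^0)|+|\Gamma(\Acal^1)|$ throws away $\Acal^0$ from $\Gamma(\Acal)^1=\Gamma(\Acal^1)\cup\Acal^0$. The ``numerical fact'' you would then need is $|\Gamma(S_{N-1}(a_0))|+|\Gamma(S_{N-1}(a_1))|\meg|\Gamma(S_N(a_0+a_1))|$, and this is false. Take $\Acal=\{\vec 0\}$, a down-set stable under all shifts. Then $a_0=1$ and $a_1=0$, so the left side is $N$, while $|\Gamma(\Acal)|=|Ball(1)|=N+1$. Similarly, $\{00,10\}\subset Q_2$ gives $2$ against $4$. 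So the induction cannot close as you set it up.

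If you keep the discarded term, $\Acal^1\subseteq\Acal^0$ gives
\[
|\Gamma(\Acal)|=|\Gamma(\Acal^0)|+|\Gamma(\Acal^1)\cup\Acal^0|\meg|\Gamma(S_{N-1}(a_0))|+\max\{|\Gamma(S_{N-1}(a_1))|,a_0\}.
\]
The right-hand side is exactly $|\Gamma(B)|$ for the set $B\subset Q_N$ whose two sections in direction $N$ are $S_{N-1}(a_0)$ and $S_{N-1}(a_1)$. This holds because neighborhoods of initial segments are initial segments, so the two sets in each union are nested. What remains is therefore Harper's theorem itself for such $B$.

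The standard proof handles this in two steps:
\begin{enumerate}
\item It compresses the sections in \emph{every} coordinate direction to simplicial initial segments. This does not increase $|\Gamma|$, by the $(N-1)$-dimensional case together with the nestedness above.
\item It proves a structural lemma: a set compressed in every direction is an initial segment of the simplicial order, apart from a few explicit exceptional sets that are checked by hand.
\end{enumerate}
Your down-set and left-shift reductions do not make the sections into initial segments. For example, $\{\emptyset,\{1\},\{2\},\{1,2\}\}\subset Q_4$ is a shifted down-set whose lower section in direction $4$ is not an initial segment of $Q_3$. This structural step, or an equivalent argument, is the missing idea, and it is where the actual content of Harper's theorem lies.
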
 

A standard application of Harper's inequality is the following estimate:

\begin{lemB}[Concentration of measure phenomenon for the Hamming cube]\label{lem:measure_of_epsilon_neighborhood_of_set_is_huge}
\;\\Let $n\in\mathbb{N}$ be a positive integer, $\varepsilon\in[0,1]$, and $\Dcal\subseteq  Q_n$. If $\mu\left(\Dcal\right)\meg \tailsize{\varepsilon}{n}$ then
\begin{equation}
\mu\left(\Dcal_{\varepsilon}\right)\meg 1-\tailsize{\varepsilon}{n}.
\end{equation}
\end{lemB}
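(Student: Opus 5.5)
The plan is to reduce to Harper's inequality (Lemma~\ref{lemm:HarperIsoperimetry}) by comparing $\Dcal$ with a Hamming ball centered at $\vec 0$, and then to use the symmetry $x\mapsto \vec 1 - x$ of $Q_n$ to identify the complement of the enlarged ball with an upper tail.

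\textbf{Step 1: from density to a ball.} Set $r\coloneqq\lfloor (1-\varepsilon)n/2\rfloor$. By Definition~\ref{defn:tailsize},
\[
\tailsize{\varepsilon}{n}=\mu\bigl([n]^{\mik (1-\varepsilon)n/2}\bigr)=\mu\bigl(Ball(r)\bigr).
\]
So the hypothesis $\mu(\Dcal)\meg\tailsize{\varepsilon}{n}$ says exactly that $|\Dcal|\meg |Ball(r)|$.

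\textbf{Step 2: apply Harper.} Lemma~\ref{lemm:HarperIsoperimetry} then gives $|\Dcal_\varepsilon|>|Ball(r+\varepsilon n)|$. Here $\Dcal_\varepsilon$ is the neighborhood of Definition~\ref{defn:epsilon_neighborhood_of_set}, whose effective integer radius is $t\coloneqq\lfloor\varepsilon n\rfloor$. Hence $\mu(Q_n\setminus\Dcal_\varepsilon)$ is at most the measure of $\{x:|x|>r+\varepsilon n\}$.

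\textbf{Step 3: identify the complement with the tail.} The map $x\mapsto\vec 1-x$ preserves $\mu$ and sends level $m$ to level $n-m$. We therefore want the inclusion
\[
\{x:|x|>r+\varepsilon n\}\subseteq [n]^{\meg(1+\varepsilon)n/2},
\]
whose measure is $\tailsize{\varepsilon}{n}$. This would give $\mu(\Dcal_\varepsilon)\meg 1-\tailsize{\varepsilon}{n}$.

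\textbf{Main obstacle: rounding.} Ignoring floors, $r+\varepsilon n=(1+\varepsilon)n/2$ and the inclusion in Step~3 is exact. With floors, $r+t$ can fall short of $(1+\varepsilon)n/2$ by up to almost $2$. Then the complement of $Ball(r+t)$ may contain one or two layers with $|x|<(1+\varepsilon)n/2$, and the inclusion fails. I would first try to recover this loss using the strict inequality in Harper's lemma, which as stated is phrased with the real radius $r+\varepsilon N$.

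If that does not close the gap, I would adopt the standard convention for applying Harper in this way. Either one interprets the tail and neighborhood radii so that $\lfloor(1-\varepsilon)n/2\rfloor+\lfloor\varepsilon n\rfloor\meg\lceil(1+\varepsilon)n/2\rceil-1$, for instance when $\varepsilon n$ and $(1\pm\varepsilon)n/2$ are integers. Or one accepts a loss of one level, which is harmless in the later applications because there only the Hoeffding bound of Lemma~\ref{lem:Hoeffding_bounds_for_tails} is used.

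Apart from this bookkeeping, the proof is a direct two-line consequence of Harper's inequality and symmetry.
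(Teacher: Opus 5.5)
The rounding issue you flag in Step 3 is a genuine gap, and it cannot be closed, because the lemma is false as literally stated. Take $n=2$, $\varepsilon=1/4$, $\Dcal=\{(0,0)\}$. Then $\tailsize{1/4}{2}=\mu([2]^{\mik 3/4})=1/4=\mu(\Dcal)$, but $\varepsilon n=1/2<1$ forces $\Dcal_{1/4}=\Dcal$. Hence $\mu(\Dcal_{1/4})=1/4<3/4=1-\tailsize{1/4}{2}$. A non-degenerate instance is $n=10$, $\varepsilon=1/4$, $\Dcal=Ball(3)$: here $\mu(\Dcal)=176/1024=\tailsize{1/4}{10}$ and $\Dcal_{1/4}=Ball(5)$ has measure $638/1024$, while $1-\tailsize{1/4}{10}=848/1024$.

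In general, set $a=(1-\varepsilon)n/2$ and $b=\varepsilon n$. Your Steps 1--3, together with the extremal set $Ball(\lfloor a\rfloor)$, show that the statement holds exactly when $\lfloor a\rfloor+\lfloor b\rfloor\meg\lceil a+b\rceil-1$, i.e.\ when the fractional parts of $a$ and $b$ sum to at most $1$. Otherwise $Ball(\lfloor a\rfloor)$ meets the hypothesis with equality. Its neighbourhood $Ball(\lfloor a\rfloor+\lfloor b\rfloor)$ then misses the nonempty layer of weight $\lfloor a\rfloor+\lfloor b\rfloor+1<a+b$ (only one layer can be missing, not two).

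In particular your first fallback cannot work. The strict inequality in Lemma~\ref{lemm:HarperIsoperimetry} as quoted is not valid: for $\Acal=Ball(r)$ and $\varepsilon N<1$ one has $\Acal_\varepsilon=\Acal$, so $|\Acal_\varepsilon|=|Ball(r+\varepsilon N)|$. Even a true strict inequality between cardinalities gains one point, not a whole layer of size $\binom{n}{\lfloor a\rfloor+\lfloor b\rfloor+1}$.

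Your remaining two options are therefore not bookkeeping but a change of statement, and you should commit to one of them.
\begin{itemize}
\item Add the hypothesis $\lfloor a\rfloor+\lfloor b\rfloor\meg\lceil a+b\rceil-1$ (for instance, $\varepsilon n$ and $(1-\varepsilon)n/2$ integers). Under it, your Harper-plus-reflection argument is a complete proof, using Harper with $\meg$.
\item Prove unconditionally $\mu(\Dcal_\varepsilon)\meg\mu\bigl([n]^{\mik\lfloor a\rfloor+\lfloor b\rfloor}\bigr)\meg 1-\mu\bigl([n]^{>(1+\varepsilon)n/2-2}\bigr)$. Then redo the Hoeffding step where the lemma is used in Theorem~\ref{thm:epsilon_distortion}; there $\varepsilon_2 N$ is large, so the shift by $2$ only affects constants.
\end{itemize}

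For comparison, the paper does not apply Harper once from the ball of radius $(1-\varepsilon)n/2$; it goes through the median. It first gets $\mu(\Dcal_{\varepsilon/2})\meg 1/2$ by contradiction, since the $\varepsilon/2$-neighbourhood of $Q_n\setminus\Dcal_{\varepsilon/2}$ is disjoint from $\Dcal$. It then applies a concentration estimate from \cite{DK2022} to $\Dcal_{\varepsilon/2}$ and uses $(\Dcal_{\varepsilon/2})_{\varepsilon/2}\subseteq\Dcal_\varepsilon$. Your route is shorter and rounds the radius once rather than splitting it into two halves. The paper's sketch also ignores integer parts, so by the counterexample above it needs the same correction.
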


\begin{proof}
    The proof can be deduced by applying Equation 3.5 found in the body of the proof of Proposition 3.1 of \cite{DK2022}. Specifically if one assumes that $\mu\left(\Acal_{\frac{\varepsilon}{2}}\right)\meg\frac{1}{2}$ the result follows if one applies this equation to the set $\left(\Acal_{\frac{\varepsilon}{2}}\right)_{\frac{\varepsilon}{2}}$. On the other hand assuming that $\mu\left(\Acal_{\frac{\varepsilon}{2}}\right)<\frac{1}{2}$ leads to a contradiction by noticing that $\left(Q_n\setminus\Acal_{\frac{\varepsilon}{2}}\right)_{\frac{\varepsilon}{2}}\cap\Acal=\emptyset$.
\end{proof}

The following notation is helpful in describing the roughly equitable block copies. 

\begin{defnB}
Let $n,k\in\mathbb{N}$, $x\in Q_n$, $\Dcal\subset Q_{kn}$ and $\varepsilon_1\in[0,1]$. We define:
\begin{align}
\Vcal_{\pm\varepsilon_1}^n\left(x\right)&\coloneqq\Set*{x'\in Q_n:\norm{x-x'}\in\left((1-\varepsilon)\frac{n}{2},(1+\varepsilon)\frac{n}{2}\right)}\label{eq:varepsilon_central_neighorhood_of_element}\\
\Vcal_{\pm\varepsilon_1}^{k,n}&\coloneqq\Set*{\left(x_0^i,x_1^i\right)_{i\in[k]}\in(Q_n\times Q_n)^k:\forall i\in[k]\,\left(x_0^i\in \Vcal_{\pm\varepsilon_1}^n\left(x_1^i\right)\right)}\\
\Vcal_{\pm\varepsilon_1}^{k,n}(\Dcal)&\coloneqq\Set*{\left(x_0^i,x_1^i\right)_{i\in[k]}\in\Vcal_{\pm\varepsilon_1}^{k,n}:\forall\alpha\in Q_k\left((x_{\alpha_i}^i)_{i\in[k]}\in\Dcal\right)}\label{eq:set_of_varepsilon_central_pairs}
\end{align}
\end{defnB}

We have the following easy fact about the sizes of $\Vcal_{\pm\varepsilon_1}^n\left(x\right)$ and $\Vcal_{\pm\varepsilon_1}^{k,n}$.

\begin{factB}\label{fact:size_of_varepsilon_neighborhood_and_central_neighborhood_of_element}
Let $n\in\mathbb{N}$, $x\in Q_n$ and $\varepsilon_1\in[0,1]$. Then:
\begin{align}
\mu\left(\Vcal_{\pm\varepsilon_1}^n\left(x\right)\right)&=1-\mu\left([n]^{\mik (1-\varepsilon_1)\frac{n}{2}}\right)-\mu\left([n]^{\meg (1+\varepsilon_1)\frac{n}{2}}\right)=1-2\tailsize{\varepsilon_1}{n}\label{eq:size_of_varepsilon_central_neighorhood_of_element}\\
\mu\left(\Vcal_{\pm\varepsilon_1}^{k,n}\right)&=(1-2\tailsize{\varepsilon_1}{n})^k.\label{eq:size_of_set_of_varepsilon_central_pairs}
\end{align}
\end{factB}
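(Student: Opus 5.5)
The fact follows directly from the definitions and the symmetry of the Hamming cube; both identities are computed by translating to the origin.

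For the first identity, fix $x\in Q_n$ and use the translation $x'\mapsto x'\oplus x$ (coordinatewise addition mod $2$). This map is an isometry of $Q_n$ and preserves the uniform measure $\mu$. It sends $x$ to $\vec 0$, and it sends $\Vcal_{\pm\varepsilon_1}^n(x)$ bijectively onto $\Vcal_{\pm\varepsilon_1}^n(\vec 0)=\{z\in Q_n:\ (1-\varepsilon_1)\tfrac n2<|z|<(1+\varepsilon_1)\tfrac n2\}$, since $\norm{x-x'}=|x\oplus x'|$. The complement of this set is the disjoint union of $[n]^{\mik(1-\varepsilon_1)\frac n2}$ and $[n]^{\meg(1+\varepsilon_1)\frac n2}$; these are disjoint when $\varepsilon_1>0$. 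Their measures are equal by the complementation symmetry $z\mapsto \vec 1\oplus z$, which sends weight $w$ to $n-w$, and by Definition~\ref{defn:tailsize} each equals $\tailsize{\varepsilon_1}{n}$. This gives \eqref{eq:size_of_varepsilon_central_neighorhood_of_element}.

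For the second identity, regard $\Vcal_{\pm\varepsilon_1}^{k,n}$ as a subset of $(Q_n\times Q_n)^k$ with the uniform product measure. It is the product over $i\in[k]$ of the set $P=\{(x_0,x_1)\in Q_n\times Q_n: x_0\in \Vcal^n_{\pm\varepsilon_1}(x_1)\}$, so its measure is $\mu(P)^k$. To compute $\mu(P)$, apply Fubini: conditioning on $x_1$, the $x_0$-section of $P$ is $\Vcal^n_{\pm\varepsilon_1}(x_1)$, whose measure is $1-2\tailsize{\varepsilon_1}{n}$ independently of $x_1$ by the first part. Averaging over $x_1$ gives $\mu(P)=1-2\tailsize{\varepsilon_1}{n}$, and this yields \eqref{eq:size_of_set_of_varepsilon_central_pairs}.

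The one point that needs care is the boundary convention. The definition uses open inequalities, while $\tailsize{\varepsilon}{n}$ is defined with the closed sets $\mik$ and $\meg$. The complement of the open interval therefore matches the closed tails exactly, so nothing is lost at the endpoints. The degenerate case $\varepsilon_1=0$ is the exception: there both tails contain the middle layer when $n$ is even, and the two closed tails are no longer disjoint. The identity should be read for $\varepsilon_1>0$, which is the regime in which it is used.
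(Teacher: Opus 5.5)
Your proof is correct and is essentially the paper's argument: the paper likewise uses translation invariance of $\mu$ to send $\Vcal_{\pm\varepsilon_1}^n(x)$ to the vectors whose weight lies in the open interval $((1-\varepsilon_1)\frac n2,(1+\varepsilon_1)\frac n2)$, and it leaves implicit the product/Fubini step for the second identity, which you supply. Your boundary caveat is also right and is a genuine correction to the statement as written: for $\varepsilon_1=0$ and $n$ even, $\Vcal_{\pm 0}^n(x)=\emptyset$ while $1-2\tailsize{0}{n}=-\binom{n}{n/2}2^{-n}<0$, so the fact needs $\varepsilon_1>0$ (or $n$ odd); this is harmless, since every application in the paper has $\varepsilon_1>0$.
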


This follows directly from the translation invariance of the measure $\mu$: for a fixed $x$, the map $x'\mapsto x-x'$ is a bijection from $ Q_n$ to itself, and it sends $\Vcal_{\pm\varepsilon_1}^n(x)$ to the set of vectors whose Hamming weight lies in $((1-\varepsilon_1)n/2,(1+\varepsilon_1)n/2)$, whose measure is exactly $1-2\tailsize{\varepsilon_1}{n}$.

We will also need a natural probability measure on $\Vcal_{\pm\varepsilon_1}^{k,n}$ which we define by:
\begin{equation}\label{eq:mu*}
\mu^{k,n}_{\varepsilon_1}\left(\Dcal\right)\coloneqq\frac{\abs*{\Vcal_{\pm\varepsilon_1}^{k,n}\left(\Dcal\right)}}{\abs*{\Vcal_{\pm\varepsilon_1}^{k,n}}}.
\end{equation}

\begin{lemB}[Counting the number of roughly balanced pairs $(x_0^1,x_1^1)$ inside a subset]\label{lem:block_εsubspaces_one_dimensional}
\;\\Let $n\in\mathbb{N}$, $\varepsilon_1\in[0,1]$ and $\Dcal\subset Q_n$. Then it holds that
\[
\mu^{1,n}_{\varepsilon_1}\left(\Dcal\right)\meg\left(1-\frac{2\tailsize{\varepsilon_1}{n}}{\mu\left(\Dcal\right)}\right)\left(\mu\left(\Dcal\right)\right)^2.
\]
In particular \textbf{if} $\mu\left(\Dcal\right)>2\tailsize{\varepsilon_1}{n}$ \textbf{then} $\Vcal_{\pm\varepsilon_1}^{1,n}\left(\Dcal\right)\neq\emptyset$.
\end{lemB}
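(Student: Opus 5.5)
We bound $\mu^{1,n}_{\varepsilon_1}(\Dcal)$ by direct counting over ordered pairs. By definition,
\[
\abs*{\Vcal^{1,n}_{\pm\varepsilon_1}(\Dcal)} = \sum_{x\in\Dcal}\abs*{\Vcal^n_{\pm\varepsilon_1}(x)\cap\Dcal},
\]
since a pair $(x_0,x_1)$ lies in $\Vcal^{1,n}_{\pm\varepsilon_1}(\Dcal)$ exactly when both $x_0,x_1\in\Dcal$ and $x_0\in\Vcal^n_{\pm\varepsilon_1}(x_1)$. The relation is symmetric, so the ordering convention is immaterial.

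By Fact~\ref{fact:size_of_varepsilon_neighborhood_and_central_neighborhood_of_element}, the denominator is $\abs*{\Vcal^{1,n}_{\pm\varepsilon_1}} = 2^n\cdot 2^n(1-2\tailsize{\varepsilon_1}{n})$. Since $1-2\tailsize{\varepsilon_1}{n}\le 1$, it is enough to lower bound the numerator by $4^n\bigl(\mu(\Dcal)^2-2\tailsize{\varepsilon_1}{n}\mu(\Dcal)\bigr)$. This expression equals $4^n\bigl(1-2\tailsize{\varepsilon_1}{n}/\mu(\Dcal)\bigr)\mu(\Dcal)^2$, as required.

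For each fixed $x\in\Dcal$, the complement of $\Vcal^n_{\pm\varepsilon_1}(x)$ has measure $2\tailsize{\varepsilon_1}{n}$, again by the Fact together with translation invariance. Hence
\[
\abs*{\Vcal^n_{\pm\varepsilon_1}(x)\cap\Dcal}\ \ge\ \abs{\Dcal}-2\tailsize{\varepsilon_1}{n}2^n = 2^n\bigl(\mu(\Dcal)-2\tailsize{\varepsilon_1}{n}\bigr).
\]
Summing over the $2^n\mu(\Dcal)$ points $x\in\Dcal$ gives a numerator of at least $4^n\mu(\Dcal)\bigl(\mu(\Dcal)-2\tailsize{\varepsilon_1}{n}\bigr)$. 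Dividing by the denominator, and using $(1-2\tailsize{\varepsilon_1}{n})^{-1}\ge1$ when the bracket is nonnegative, yields the displayed inequality. If the bracket is negative, the inequality is trivial because the left side is nonnegative.

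For the ``in particular'' clause: when $\mu(\Dcal)>2\tailsize{\varepsilon_1}{n}$ the right-hand side is strictly positive, so $\Vcal^{1,n}_{\pm\varepsilon_1}(\Dcal)$ is nonempty. There is no real obstacle in this argument. The only care needed is the bookkeeping of the measure normalization in \eqref{eq:mu*}, and the fact that the bound for each fixed $x$ loses only the tail mass and does not use Cauchy--Schwarz.
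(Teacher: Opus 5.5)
Your proof is correct and follows essentially the paper's argument. For each $x\in\Dcal$, the set $\Vcal^n_{\pm\varepsilon_1}(x)\cap\Dcal$ loses at most the tail mass $2\tailsize{\varepsilon_1}{n}$; you sum over $x\in\Dcal$, divide by $\abs{\Vcal^{1,n}_{\pm\varepsilon_1}}=(1-2\tailsize{\varepsilon_1}{n})4^n$, and drop the factor $(1-2\tailsize{\varepsilon_1}{n})^{-1}\ge 1$. Your separate treatment of the case $\mu(\Dcal)<2\tailsize{\varepsilon_1}{n}$ is a small improvement, since the paper's last step (``since $1-2\tailsize{\varepsilon_1}{n}\le 1$'') tacitly requires the numerator to be nonnegative.
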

\begin{proof}
For every $x\in\Dcal$, from Equation \eqref{eq:size_of_varepsilon_central_neighorhood_of_element} of Fact \ref{fact:size_of_varepsilon_neighborhood_and_central_neighborhood_of_element} we have
\[
\mu\bigl(\Vcal_{\pm\varepsilon_1}^n\left(x\right)\bigr)=1-2\tailsize{\varepsilon_1}{n}.
\]
The set $\Vcal_{\pm\varepsilon_1}^n(x)$ misses precisely the points $x'$ with $\norm{x-x'}\mik (1-\varepsilon_1)n/2$ or $\norm{x-x'}\meg (1+\varepsilon_1)n/2$; these two tails together have measure $2\tailsize{\varepsilon_1}{n}$.  Consequently,
\[
\mu\bigl(\Vcal_{\pm\varepsilon_1}^n(x)\cap\Dcal\bigr)\meg \mu(\Dcal)-2\tailsize{\varepsilon_1}{n}.
\]

Now we count pairs.  Every element of $\Vcal_{\pm\varepsilon_1}^{1,n}(\Dcal)$ is of the form $(x,x')$ with $x\in\Dcal$ and $x'\in\Dcal\cap\Vcal_{\pm\varepsilon_1}^n(x)$. Hence
\[
\bigl|\Vcal_{\pm\varepsilon_1}^{1,n}(\Dcal)\bigr| \meg \sum_{x\in\Dcal} \bigl|\Dcal\cap\Vcal_{\pm\varepsilon_1}^n(x)\bigr|
\meg |\Dcal|\cdot\min_{x\in\Dcal}\bigl|\Dcal\cap\Vcal_{\pm\varepsilon_1}^n(x)\bigr|.
\]
Dividing by $|\Vcal_{\pm\varepsilon_1}^{1,n}|=(1-2\tailsize{\varepsilon_1}{n})\,2^{2n}$ (by \eqref{eq:size_of_set_of_varepsilon_central_pairs}) gives
\[
\mu^{1,n}_{\varepsilon_1}(\Dcal)
\meg \frac{|\Dcal|\bigl(\mu(\Dcal)-2\tailsize{\varepsilon_1}{n}\bigr)2^{n}}
            {(1-2\tailsize{\varepsilon_1}{n})2^{2n}}
 = \frac{\mu(\Dcal)\bigl(\mu(\Dcal)-2\tailsize{\varepsilon_1}{n}\bigr)}
        {1-2\tailsize{\varepsilon_1}{n}}.
\]
Since $1-2\tailsize{\varepsilon_1}{n}\mik 1$, we obtain
\[
\mu^{1,n}_{\varepsilon_1}(\Dcal) \meg \mu(\Dcal)\bigl(\mu(\Dcal)-2\tailsize{\varepsilon_1}{n}\bigr)
 = \Bigl(1-\frac{2\tailsize{\varepsilon_1}{n}}{\mu(\Dcal)}\Bigr)\bigl(\mu(\Dcal)\bigr)^2.
\]
In particular, if $\mu(\Dcal)>2\tailsize{\varepsilon_1}{n}$ then $\mu^{1,n}_{\varepsilon_1}(\Dcal)>0$, and therefore $\Vcal_{\pm\varepsilon_1}^{1,n}(\Dcal)\neq\emptyset$.
\end{proof}

The next proposition extends the block construction to $k$ dimensions by induction.  The inductive hypothesis is that under suitable density conditions we can find $k$ pairs $(x_0^i,x_1^i)$ with $\norm{x_0^i-x_1^i}\approx n/2$ such that all $2^k$ concatenations belong to $\Dcal$.

\begin{propB}[Finding roughly equitable block copies inside extremely dense subsets]\label{prop:block_k_subspaces}
$\mbox{}$\newline Let $n,k\in\mathbb{N}$ be such that $k\mik n$, $\varepsilon_1\in[\frac{2}{\sqrt{n}},1]$, and $\Dcal\subseteq\{0,1\}^{kn}$ with $\mu\left(\Dcal\right)=\delta$. \textbf{If} we have
\begin{equation}\label{eq:block_delta_bound_k}
\delta > \delta_k\coloneqq \left(2^k\tailsize{\varepsilon_1}{n}\right)^{\frac{1}{2^{k-1}}}\;\;\;\text{and}\;\;\;3k<\varepsilon_1^2 n
\end{equation}
\textbf{then} $\Dcal$ contains a roughly equitable block copy, i.e. $\Vcal_{\pm\varepsilon_1}^{k,n}\left(\Dcal\right)\neq\emptyset$.
\end{propB}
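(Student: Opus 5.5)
We argue by induction on $k$, proving the stronger quantitative statement
\[
\mu^{k,n}_{\varepsilon_1}(\Dcal)\meg \delta^{2^k}-c_k\,\tailsize{\varepsilon_1}{n}\quad\text{for some explicit } c_k,
\]
or at least a lower bound positive whenever $\delta>\delta_k$. The base case $k=1$ is Lemma \ref{lem:block_εsubspaces_one_dimensional}: if $\delta^2>2\tailsize{\varepsilon_1}{n}$ then $\delta>2\tailsize{\varepsilon_1}{n}$, since $\delta\le 1$ and $\delta^2\le\delta$. So $\Vcal^{1,n}_{\pm\varepsilon_1}(\Dcal)\neq\emptyset$, with the quantitative bound $\mu^{1,n}_{\varepsilon_1}(\Dcal)\meg\delta^2-2\tailsize{\varepsilon_1}{n}\delta$.

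For the inductive step, write $Q_{kn}=Q_n\times Q_{(k-1)n}$ and use sections as in Definition \ref{defn:section_on_element}. For each pair $(x_0,x_1)\in\Vcal^{1,n}_{\pm\varepsilon_1}$ set $\Dcal_{x_0,x_1}:=\Dcal_{x_0}\cap\Dcal_{x_1}\subset Q_{(k-1)n}$. A $k$-dimensional roughly equitable block copy in $\Dcal$ is exactly such a pair together with a $(k-1)$-dimensional copy in $\Dcal_{x_0,x_1}$. It therefore suffices to find one pair with $\mu(\Dcal_{x_0,x_1})>\delta_{k-1}$.

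Average $\mu(\Dcal_{x_0}\cap\Dcal_{x_1})$ over pairs drawn uniformly from $\Vcal^{1,n}_{\pm\varepsilon_1}$. Over all of $Q_n\times Q_n$, Cauchy--Schwarz gives
\[
\ave_{x_0,x_1}\mu(\Dcal_{x_0}\cap\Dcal_{x_1})=\ave_{y}\bigl(\mu\{x:(x,y)\in\Dcal\}\bigr)^2\meg\delta^2 .
\]
Restricting to the central pairs loses at most the excluded mass $2\tailsize{\varepsilon_1}{n}$, exactly as in the proof of Lemma \ref{lem:block_εsubspaces_one_dimensional}. Hence some central pair satisfies
\[
\mu(\Dcal_{x_0,x_1})\meg\frac{\delta^2-2\tailsize{\varepsilon_1}{n}}{1-2\tailsize{\varepsilon_1}{n}}\meg\delta^2-2\tailsize{\varepsilon_1}{n}.
\]
We then need $\delta^2-2\tailsize{\varepsilon_1}{n}>\delta_{k-1}=(2^{k-1}\tailsize{\varepsilon_1}{n})^{1/2^{k-2}}$. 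The hypothesis gives
\[
\delta^2>\delta_k^2=(2^k\tailsize{\varepsilon_1}{n})^{1/2^{k-2}}=2^{1/2^{k-2}}\,\delta_{k-1},
\]
so it suffices that $(2^{1/2^{k-2}}-1)\,\delta_{k-1}\meg 2\tailsize{\varepsilon_1}{n}$.

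This inequality is the main obstacle. The gain factor $2^{1/2^{k-2}}-1\approx 2^{-(k-2)}\ln 2$ is tiny, while $\delta_{k-1}$ is roughly $\tailsize{\varepsilon_1}{n}^{1/2^{k-2}}$, which is much larger than $\tailsize{\varepsilon_1}{n}$. The condition reduces to
\[
\tailsize{\varepsilon_1}{n}^{1-1/2^{k-2}}\lesssim 2^{-k}\cdot 2^{(k-1)/2^{k-2}} .
\]
This is where the hypotheses $3k<\varepsilon_1^2 n$ and $\varepsilon_1\ge 2/\sqrt n$ enter. Through Lemma \ref{lem:Hoeffding_bounds_for_tails} they give $\tailsize{\varepsilon_1}{n}\le e^{-\varepsilon_1^2 n/2}<e^{-3k/2}$, small enough that $\tailsize{\varepsilon_1}{n}^{1-2^{2-k}}\le e^{-3k/4}\le 2^{-k}$ for $k\ge3$, with small $k$ checked separately. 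The case $k=2$ needs $\delta^2>2\tailsize{\varepsilon_1}{n}+\delta_1$ where $\delta_1=2\tailsize{\varepsilon_1}{n}$, i.e. $\delta^2>4\tailsize{\varepsilon_1}{n}$, which is exactly $\delta>\delta_2$; here one should use the sharper bound $\delta^2-2\tailsize{\varepsilon_1}{n}\delta$ to get strict inequality.

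If this bookkeeping fails by constants, the fallback is to carry a slightly stronger inductive hypothesis with explicit slack, for instance $\delta>(2^{k}\tailsize{\varepsilon_1}{n})^{1/2^{k-1}}$ implying the density bound $\mu(\Dcal_{x_0,x_1})\ge\delta^2(1-\tailsize{\varepsilon_1}{n}/\delta^2)$. The hypothesis $k\le n$ is used only to keep the block decomposition meaningful.
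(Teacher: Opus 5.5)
Your proposal is correct and is essentially the paper's proof. Both argue by induction on $k$: split $Q_{kn}=Q_n\times Q_{(k-1)n}$, average to find a central pair $(x_0,x_1)$ whose common section has density $>\delta_{k-1}$, then close the numerics with Hoeffding's bound and $3k<\varepsilon_1^2 n$. The only difference is in the averaging bound: you use Cauchy--Schwarz over all of $Q_n\times Q_n$ minus the non-central mass, losing $2\alpha$ with $\alpha:=\tailsize{\varepsilon_1}{n}$, whereas the paper applies the one-dimensional counting lemma to each section plus Jensen for the convex $t\mapsto t^2-2\alpha t$, losing only $2\alpha\delta$. Your cruder loss already suffices, since $2^{1/2^{k-2}}-1\geq 4\ln 2\cdot 2^{-k}$ makes $\alpha^{1-2^{2-k}}\leq 2^{-k}$ enough, so no fallback is needed; also, because $\delta_1=2\alpha$, the base case is just the lemma, and for $k=2$ strictness comes directly from $\delta>\delta_2$.
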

\begin{proof}
We prove by induction on $k$ that under the stated conditions $\mu^{k,n}_{\varepsilon_1}(\Dcal)>0$, which immediately implies $\Vcal_{\pm\varepsilon_1}^{k,n}(\Dcal)\neq\emptyset$.

\textbf{Base case $k=1$:}  
Lemma~\ref{lem:block_εsubspaces_one_dimensional} gives $\mu_{\varepsilon_1}^{1,n}(\Dcal)\meg (1-2\tailsize{\varepsilon_1}{n}/\delta)\delta^2$.  Because $\delta>\delta_1=2\tailsize{\varepsilon_1}{n}$ (by \eqref{eq:block_delta_bound_k} with $k=1$), the factor in parentheses is positive, so $\mu_{\varepsilon_1}^{1,n}(\Dcal)>0$.

\textbf{Inductive step:} Assume $k\meg 2$ and the statement holds for $k-1$.  Write $Q_{kn}=Q_n\times Q_{(k-1)n}$.  For each $x\in Q_{(k-1)n}$ let $\Dcal_x=\{y\in Q_n:(y,x)\in\Dcal\}$ be the section of $\Dcal$ over $x$.  Define, for a pair $(y,y')\in\Vcal_{\pm\varepsilon_1}^{1,n}$,
\[
V_{(y,y')}\coloneqq\bigl\{x\in Q_{(k-1)n}: \{y,y'\}\subset\Dcal_x\bigr\}.
\]
The key observation is the following double‑counting identity:
\[
\sum_{x\in Q_{(k-1)n}} \bigl|\Vcal_{\pm\varepsilon_1}^{1,n}(\Dcal_x)\bigr|
   = \sum_{(y,y')\in\Vcal_{\pm\varepsilon_1}^{1,n}} |V_{(y,y')}|.
\]
Indeed, the left‑hand side counts triples $(x,y,y')$ with $x\in Q_{(k-1)n}$ and $(y,y')\in\Vcal_{\pm\varepsilon_1}^{1,n}(\Dcal_x)$; the right‑hand side counts the same triples grouped by $(y,y')$ first.  Dividing both sides by $|\Vcal_{\pm\varepsilon_1}^{1,n}|\cdot 2^{(k-1)n}$ yields
\begin{equation}\label{eq:aveDxVyy'}
\ave_{x}\mu_{\varepsilon_1}^{1,n}(\Dcal_x)=\ave_{(y,y')}\mu\bigl(V_{(y,y')}\bigr),
\end{equation}
where the averages are taken with respect to the uniform measures on $Q_{(k-1)n}$ and $\Vcal_{\pm\varepsilon_1}^{1,n}$, respectively.

Now we lower‑bound the left‑hand side.  For each $x$, Lemma~\ref{lem:block_εsubspaces_one_dimensional} gives
\[
\mu_{\varepsilon_1}^{1,n}(\Dcal_x)\meg f\bigl(\mu(\Dcal_x)\bigr)
\quad\text{with}\quad f(t)\coloneqq\Bigl(1-\frac{2\tailsize{\varepsilon_1}{n}}{t}\Bigr)t^{2}\;(t>0).
\]
The function $f$ is convex on $(0,\infty)$.  Moreover, Jensen's inequality applied to the convex function $f$ together with $\ave_x\mu(\Dcal_x)=\delta$ gives:
\[
\ave_{x}\mu_{\varepsilon_1}^{1,n}(\Dcal_x)\meg f(\delta)
        =\Bigl(1-\frac{2\tailsize{\varepsilon_1}{n}}{\delta}\Bigr)\delta^{2}.
\]

Combining this with \eqref{eq:aveDxVyy'} we obtain
\begin{equation}\label{eq:lower_bound_on__Ave_Vyy'}
\ave_{(y,y')}\mu\bigl(V_{(y,y')}\bigr)\meg \Bigl(1-\frac{2\tailsize{\varepsilon_1}{n}}{\delta}\Bigr)\delta^{2}.
\end{equation}

If the right‑hand side of \eqref{eq:lower_bound_on__Ave_Vyy'} exceeds $\delta_{k-1}$, then there exists a pair $(y,y')$ for $\mu(V_{(y,y')})>\delta_{k-1}$.  For such a pair we can apply the inductive hypothesis to the set $V_{(y,y')}\subseteq Q_{(k-1)n}$ and conclude that $\Vcal_{\pm\varepsilon_1}^{k,n}(\Dcal)\neq\emptyset$.

Thus it suffices to verify that
\[
\Bigl(1-\frac{2\tailsize{\varepsilon_1}{n}}{\delta_k}\Bigr)\delta_k^{2} \;>\; \delta_{k-1}.
\]
The following auxiliary claim will help.

\begin{claimB}\label{claim:delta_relations}
For $\varepsilon_1\meg 2/\sqrt{n}$ and $k\meg 2$,
\[
\delta_{k-1}=2^{-1/2^{\,k-2}}\,\delta_k^{2},
\qquad
\frac{2\tailsize{\varepsilon_1}{n}}{\delta_k}\mik (\tailsize{\varepsilon_1}{n})^{1/2}.
\]
\end{claimB}
\begin{proof}[Proof of Claim~\ref{claim:delta_relations}]
The first equality follows directly from the definition of $\delta_k$. For the second inequality we need to show $2\tailsize{\varepsilon_1}{n}^{1/2}\mik \delta_k$.  Raising both sides to the power $2^{k-1}$ gives the equivalent condition
\[
2^{2^{k-1}}\cdot\tailsize{\varepsilon_1}{n}^{\,2^{k-2}}\mik 2^{k}\tailsize{\varepsilon_1}{n}
\;\Longleftrightarrow\;
2^{2^{k-1}-k}\mik \tailsize{\varepsilon_1}{n}^{\,1-2^{k-2}} .
\]
For $k=2$ this reads $2^{2-2}=1\mik \tailsize{\varepsilon_1}{n}^{0}=1$, which is true.  For $k\meg 3$ note the $1-2^{k-2}<0$, and that the inequality becomes
\[
\tailsize{\varepsilon_1}{n}\mik \left(\frac{1}{2}\right)^{\frac{2^{k-1}-k}{2^{k-2}-1}}.
\]
The right‑hand side is at least $\frac{1}{4}$ for $k\meg 3$, furthermore by Hoeffding's bound (Lemma~\ref{lem:Hoeffding_bounds_for_tails}), $\tailsize{\varepsilon_1}{n}\mik\exp(-\varepsilon_1^{2}n/2)$.  Now the condition $\varepsilon_1\meg \frac{2}{\sqrt{n}}$ yields the desired bound.
\end{proof}

Using Claim~\ref{claim:delta_relations},
\[
\Bigl(1-\frac{2\tailsize{\varepsilon_1}{n}}{\delta_k}\Bigr)\delta_k^{2}
\meg \bigl(1-(\tailsize{\varepsilon_1}{n})^{1/2}\bigr)\delta_k^{2}
    = \bigl(1-(\tailsize{\varepsilon_1}{n})^{1/2}\bigr)2^{1/2^{k-2}}\delta_{k-1}.
\]
Now again by Hoeffding's bound and the constraint $3k<\varepsilon_1^2 n$, 
we have that $\tailsize{\varepsilon_1}{n}^{\frac{1}{2}}\mik e^{-\frac{3k}{4}}$, hence it is enough to show that:
\begin{equation}
\bigl(1-e^{-\frac{3k}{4}}\bigr)2^{1/2^{k-2}}\geq 1.
\end{equation}
This holds for every $k\geq 2$ and consequently the product exceeds $\delta_{k-1}$, completing the induction.
\end{proof}

The above block construction can be used to bound $\Emb(1+\varepsilon,\delta,k)$ but gives us very bad bounds which are of the order of $2^k$. The next proposition shows how to exploit the concentration of measure phenomenon and the nice block structure that is guaranteed by Proposition \ref{prop:block_k_subspaces} to obtain much better bounds.

\begin{propB}[Small block error and small perturbation error yield small distortion]\label{prop:subspace_in_Dε_then_subspace_in_D_two_eps}
\;\\Let \(n,k\in\mathbb{N}\) and \(\mathcal{D}\subset\{0,1\}^{kn}\). Let \(\varepsilon_1,\varepsilon_2\in(0,1)\) and set
\(
\varepsilon \coloneqq 9\left(\varepsilon_1+\varepsilon_2 k\right).
\)
Assume that
\(
\varepsilon \mik \frac{1}{4}.
\)
\\\textbf{If} we have a roughly equitable block copy inside the $\varepsilon_2$-neighborhood of $\Dcal$, i.e. $\mathcal{V}_{\pm\varepsilon_1}^{k,n}\bigl(\mathcal{D}_{\varepsilon_2}\bigr)\neq\emptyset$, 
\\\textbf{then} 
$\Dcal$ contains a $(1+\varepsilon)$-distorted and $r:=\frac{n}{2}\bigl(1-\varepsilon_1-4\varepsilon_2 k\bigr)$-rescaled copy of the Hamming cube $Q_k$, i.e. 
there exists an injective function \(f: Q_k\rightarrow\mathcal{D}\) such that for every \(\alpha,\alpha'\in Q_k\) it holds that
\begin{equation}\label{eq:subspace_in_Dε_then_embedding_in_D_two_eps}
r \,\norm*{\alpha-\alpha'}
\;\mik\;
\norm*{f\left(\alpha\right)-f\left(\alpha'\right)}
\;\mik\;
r\left(1+\varepsilon\right)\norm*{\alpha-\alpha'}.
\end{equation}
\end{propB}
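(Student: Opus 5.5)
Start from a witness $(x_0^i,x_1^i)_{i\in[k]}\in\Vcal_{\pm\varepsilon_1}^{k,n}(\Dcal_{\varepsilon_2})$. Write $g(\alpha)\coloneqq(x^i_{\alpha_i})_{i\in[k]}\in\Dcal_{\varepsilon_2}\subset Q_{kn}$. By definition of the $\varepsilon_2$-neighborhood, for every $\alpha\in Q_k$ we choose some $f(\alpha)\in\Dcal$ with $\norm{f(\alpha)-g(\alpha)}\mik\varepsilon_2 kn$; the ambient dimension is $kn$, which is exactly why the perturbation error enters multiplied by $k$. The plan is to compare $f$ with the exact block map $g$ and absorb the perturbation through the triangle inequality.

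First I record the distances of $g$. For $\alpha,\alpha'$ with $m\coloneqq\norm{\alpha-\alpha'}\meg1$,
\[
\norm{g(\alpha)-g(\alpha')}=\sum_{i:\alpha_i\neq\alpha'_i}\norm{x_0^i-x_1^i}\in\Bigl(m(1-\varepsilon_1)\tfrac n2,\;m(1+\varepsilon_1)\tfrac n2\Bigr).
\]
Two perturbations then change the distance by at most $2\varepsilon_2kn$, so
\[
m(1-\varepsilon_1)\tfrac n2-2\varepsilon_2kn\;\mik\;\norm{f(\alpha)-f(\alpha')}\;\mik\;m(1+\varepsilon_1)\tfrac n2+2\varepsilon_2kn .
\]
Since $m\meg1$, I bound the additive error by $2\varepsilon_2kn\mik m\cdot 4\varepsilon_2k\cdot\frac n2$. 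This gives the lower bound $m\frac n2(1-\varepsilon_1-4\varepsilon_2k)=mr$, exactly the claimed rescaling. It also gives the upper bound $m\frac n2(1+\varepsilon_1+4\varepsilon_2k)$.

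What remains is the ratio estimate $\frac{1+\varepsilon_1+4\varepsilon_2k}{1-\varepsilon_1-4\varepsilon_2k}\mik1+\varepsilon$. Set $t\coloneqq\varepsilon_1+4\varepsilon_2k\mik\varepsilon/9\mik1/36$. Then the ratio equals $1+\frac{2t}{1-t}\mik1+3t\mik1+\varepsilon$. The slack in the constant $9$ makes this comfortable.

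Injectivity follows from the lower bound, since $r>0$ when $t<1$. The argument is essentially routine. The only point requiring care, and the one I expect to be the main (mild) obstacle, is converting the additive perturbation error $2\varepsilon_2kn$ into a multiplicative error. This conversion uses $m\meg1$, and it is the reason the rescaling $r$ carries the term $4\varepsilon_2k$: the worst case is adjacent vertices $m=1$, where the perturbation is largest relative to the distance.
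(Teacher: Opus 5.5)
Your setup and the triangle-inequality step coincide with the paper's proof, including converting the additive error $2\varepsilon_2kn$ into the multiplicative term $4\varepsilon_2k$ via $m\ge 1$ and deducing injectivity from $r>0$. The gap is in the final ratio estimate. You claim $t=\varepsilon_1+4\varepsilon_2k\le \varepsilon/9$, but $\varepsilon/9=\varepsilon_1+\varepsilon_2k\le t$. The correct bound is only $t\le 4(\varepsilon_1+\varepsilon_2k)=4\varepsilon/9\le 1/9$. With this bound, your estimate $\frac{2t}{1-t}\le 3t$ yields only $\frac{1+t}{1-t}\le 1+\frac{4}{3}\varepsilon$, which does not prove the statement.

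There is also no ``comfortable slack'' in the constant $9$. Take $\varepsilon=1/4$ and let $\varepsilon_1\to 0$, so $\varepsilon_2k\to 1/36$. Then $t\to 1/9$, and the ratio of your two bounds satisfies $\frac{1+t}{1-t}\to\frac54=1+\varepsilon$. For small $u\coloneqq\varepsilon_1+\varepsilon_2k$ the worst case $\frac{1+4u}{1-4u}\approx 1+8u$ already needs a constant of about $8$. Your bound $1+3t\le 1+12u$ would require the constant $12$ in place of $9$.

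The fix is to use the sharper inequality that $t\le 1/9$ provides. Since $1-t\ge 8/9$, you get $\frac{2t}{1-t}\le \frac94\,t\le \frac94\cdot\frac{4\varepsilon}{9}=\varepsilon$. This is exactly the paper's Step 3, which verifies $1+4u\le(1-4u)(1+9u)$, i.e.\ $u(1-36u)\ge 0$, for $u\le 1/36$. With that correction your argument is the paper's argument.
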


\begin{proof}
Since \(\mathcal{V}_{\pm\varepsilon_1}^{k,n}\bigl(\mathcal{D}_{\varepsilon_2}\bigr)\neq\emptyset\), there exist pairs
\[
(x_0^i,x_1^i)\in Q_n\times Q_n\quad\text{for each }i\in[k]
\]
such that
\begin{equation}\label{eq:pair_distance_bounds_two_eps_new}
\frac{n}{2}(1-\varepsilon_1) \;\mik\; \norm{x_0^i-x_1^i} \;\mik\; \frac{n}{2}(1+\varepsilon_1)
\quad\forall i\in[k],
\end{equation}
and for every \(\alpha=(\alpha_i)_{i\in[k]}\in Q_k\), the concatenated vector
\[
X(\alpha)\coloneqq (x_{\alpha_i}^i)_{i\in[k]}\in\mathcal{D}_{\varepsilon_2}.
\]
By the definition of \(\mathcal{D}_{\varepsilon_2}\), for each \(\alpha\in Q_k\) there exists \(Y(\alpha)\in\mathcal{D}\) such that
\begin{equation}\label{eq:approximation_error_two_eps_new}
\norm{Y(\alpha)-X(\alpha)} \;\mik\; \varepsilon_2 k n.
\end{equation}
Fix one such \(Y(\alpha)\) for each \(\alpha\) and define \(f(\alpha)\coloneqq Y(\alpha)\).

\smallskip
\noindent\textbf{Step 1: Adjacent vertices.}
Suppose \(\alpha,\alpha'\in Q_k\) differ in exactly one coordinate. Then $X(\alpha)$ and $X(\alpha')$ differ in exactly one block, and by \eqref{eq:pair_distance_bounds_two_eps_new},
\[
\norm{X(\alpha)-X(\alpha')} = \norm{x_0^i-x_1^i}
\in\left[\frac{n}{2}(1-\varepsilon_1),\,\frac{n}{2}(1+\varepsilon_1)\right].
\]

Using the triangle inequality and \eqref{eq:approximation_error_two_eps_new}, we obtain
\begin{align*}
\norm{f(\alpha)-f(\alpha')}
&\mik \norm{f(\alpha)-X(\alpha)} + \norm{X(\alpha)-X(\alpha')} + \norm{X(\alpha')-f(\alpha')} \\
&\mik \varepsilon_2kn + \frac{n}{2}(1+\varepsilon_1) + \varepsilon_2kn 
= \frac{n}{2}\bigl(1+\varepsilon_1+4\varepsilon_2k\bigr),
\end{align*}
and similarly
\begin{align*}
\norm{f(\alpha)-f(\alpha')}
&\meg \norm{X(\alpha)-X(\alpha')} - \norm{f(\alpha)-X(\alpha)} - \norm{f(\alpha')-X(\alpha')} \\
&\meg \frac{n}{2}(1-\varepsilon_1) - 2\varepsilon_2kn 
 = \frac{n}{2}\bigl(1-\varepsilon_1-4\varepsilon_2k\bigr) 
 = r.
\end{align*}
Thus, whenever $\norm{\alpha-\alpha'}=1$,
\begin{equation}\label{eq:distance_one_bounds_two_eps_new}
r \;\mik\; \norm{f(\alpha)-f(\alpha')}
\;\mik\; \frac{n}{2}\bigl(1+\varepsilon_1+4\varepsilon_2k\bigr).
\end{equation}

\smallskip
\noindent\textbf{Step 2: General pairs.}
Let now \(\alpha,\alpha'\in Q_k\) be arbitrary and set $d\coloneqq\norm{\alpha-\alpha'}$.  Choose a geodesic path
\(
\alpha=\alpha^0,\alpha^1,\dots,\alpha^d=\alpha'
\)
in the hypercube with consecutive vertices differing in one coordinate.  Summing the block‑wise distances along this path gives
\begin{equation}\label{eq:X_distance_bounds_two_eps_new}
\frac{n}{2}(1-\varepsilon_1)d \;\mik\; \norm{X(\alpha)-X(\alpha')}
\;\mik\; \frac{n}{2}(1+\varepsilon_1)d.
\end{equation}
Applying the triangle inequality together with \eqref{eq:approximation_error_two_eps_new} yields
\begin{align*}
\norm{f(\alpha)-f(\alpha')}
&\mik \norm{f(\alpha)-X(\alpha)} + \norm{X(\alpha)-X(\alpha')} + \norm{X(\alpha')-f(\alpha')} \\
&\mik 2\varepsilon_2kn + \frac{n}{2}(1+\varepsilon_1)d 
\mik \frac{n}{2}\bigl(1+\varepsilon_1+4\varepsilon_2k\bigr)d,
\end{align*}
because $2\varepsilon_2kn = (n/2)\cdot 4\varepsilon_2k \mik (n/2)\cdot 4\varepsilon_2k\,d$ (since $d\ge1$).  For the lower bound,
\begin{align*}
\norm{f(\alpha)-f(\alpha')}
&\meg \norm{X(\alpha)-X(\alpha')} - \norm{f(\alpha)-X(\alpha)} - \norm{f(\alpha')-X(\alpha')} \\
&\meg \frac{n}{2}(1-\varepsilon_1)d - 2\varepsilon_2kn 
= \frac{n}{2}\bigl((1-\varepsilon_1)d - 4\varepsilon_2k\bigr) 
\meg \frac{n}{2}\bigl(1-\varepsilon_1-4\varepsilon_2k\bigr)d 
= r\,d,
\end{align*}
where the last inequality uses $d\ge1$ to replace $(1-\varepsilon_1)d-4\varepsilon_2k$ by $(1-\varepsilon_1-4\varepsilon_2k)d$.

Thus for all \(\alpha,\alpha'\in Q_k\),
\begin{equation}\label{eq:raw_bounds_two_eps_new}
r\,\norm{\alpha-\alpha'}
\;\mik\;
\norm{f(\alpha)-f(\alpha')}
\;\mik\;
\frac{n}{2}\bigl(1+\varepsilon_1+4\varepsilon_2k\bigr)\norm{\alpha-\alpha'}.
\end{equation}

\smallskip
\noindent\textbf{Step 3: Bounding the upper factor by \(r(1+\varepsilon)\).}
Recall that $r = \frac{n}{2}(1-t)$ with $t\coloneqq \varepsilon_1+4\varepsilon_2k$, and the upper factor in \eqref{eq:raw_bounds_two_eps_new} is $\frac{n}{2}(1+t)$.  Set $u\coloneqq \varepsilon_1+\varepsilon_2k$; then by definition $\varepsilon=9u$.  The assumption $\varepsilon\le1/4$ gives $u\le 1/36$.  Moreover,
\[
t = \varepsilon_1+4\varepsilon_2k \mik \varepsilon_1+4(\varepsilon_1+\varepsilon_2k) = 5\varepsilon_1+4\varepsilon_2k \mik 4u,
\]
where the last inequality uses $\varepsilon_1\le u$.

We need to show
\[
\frac{1+t}{1-t} \mik 1+9u = 1+\varepsilon .
\]
Since the right‑hand side is a decreasing function of $t$ for fixed $u$, it suffices to check the inequality at the largest possible $t$, namely $t=4u$.  Thus we verify that for $0\le u\le 1/36$,
\[
1+4u \mik (1-4u)(1+9u).
\]
Expanding the right side gives $1+5u-36u^{2}$, so the inequality reduces to
\[
1+4u \mik 1+5u-36u^{2}
\;\Longleftrightarrow\;
0 \mik u-36u^{2}
\;\Longleftrightarrow\;
u(1-36u)\ge0,
\]
which holds precisely when $u\le 1/36$.  Therefore
\[
\frac{n}{2}(1+\varepsilon_1+4\varepsilon_2k) = \frac{n}{2}(1+t) \mik \frac{n}{2}(1-t)(1+9u) = r(1+\varepsilon).
\]

Combining this with \eqref{eq:raw_bounds_two_eps_new} yields the desired bi‑Lipschitz bounds \eqref{eq:subspace_in_Dε_then_embedding_in_D_two_eps}.  Finally, $r>0$ because $t\le4u\le4/36=1/9$, so $1-t\ge8/9>0$; consequently $f$ is injective (distinct points have positive distance).
\end{proof}

Now we put the two propositions together to obtain the main theorem of this section.

\begin{thmB}\label{thm:epsilon_distortion}
Let $k\in\mathbb{N}$, $\delta\in(0,1)$ and $\varepsilon\in(0,\frac{1}{4})$. Define
\begin{equation}\label{eq:thm_epsilon_distortion}
N(\varepsilon,\delta,k)=C\varepsilon^{-2}k^3\log\left(\frac{1}{\delta}\right),
\end{equation}
with $C=4\cdot 18^2$. Then for every $N\in\mathbb{N}$ with $N\meg N(\varepsilon,\delta,k)$ and every $\Dcal\subseteq Q_N$ with $\mu\left(\Dcal\right)>\delta$ there exists a $(1+\varepsilon)$-bi-Lipschitz embedding $f: Q_k\to\Dcal$ with rescaling $r = (1-\varepsilon_1 - 4 \varepsilon_2 k) N/(2k)$.
Consequently,
\[
\Emb(1+\varepsilon,\delta,k)\mik C\varepsilon^{-2}k^{3}\log\!\Bigl(\frac{1}{\delta}\Bigr).
\]
\end{thmB}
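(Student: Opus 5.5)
Combining Proposition \ref{prop:block_k_subspaces} and Proposition \ref{prop:subspace_in_Dε_then_subspace_in_D_two_eps}, the plan runs as follows. Fix $\varepsilon_1\coloneqq \varepsilon/18$ and $\varepsilon_2\coloneqq \varepsilon/(18k)$, so that $9(\varepsilon_1+\varepsilon_2 k)=\varepsilon\le 1/4$. Write $N=kn+s$ with $0\le s<k$. Since the extra $s$ coordinates can be absorbed by passing to a section of $\Dcal$ of density $>\delta$ (averaging over the last $s$ coordinates), we may assume $N=kn$ with $n\ge N/k-1$, so that $n\gtrsim C\varepsilon^{-2}k^2\log(1/\delta)$.

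\textbf{Step 1: blow up $\Dcal$ by concentration of measure.} Apply Lemma \ref{lem:measure_of_epsilon_neighborhood_of_set_is_huge} in $Q_{kn}$ with parameter $\varepsilon_2$. The Hoeffding bound (Lemma \ref{lem:Hoeffding_bounds_for_tails}) gives
\[
\tailsize{\varepsilon_2}{kn}\le \exp\bigl(-\varepsilon_2^2 kn/2\bigr)=\exp\bigl(-\varepsilon^2 n/(2\cdot 18^2 k)\bigr).
\]
This is $<\delta$ as soon as $n\ge 2\cdot 18^2\varepsilon^{-2}k\ln(1/\delta)$, which our choice of $C$ guarantees with room to spare. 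Hence $\mu(\Dcal_{\varepsilon_2})\ge 1-\tailsize{\varepsilon_2}{kn}\ge 1-\exp(-\varepsilon^2 n/(648k))$. \textbf{This is the crux:} the neighbourhood is now extremely dense, with complement exponentially small in $n/k$. That is precisely the regime in which the wasteful induction of Proposition \ref{prop:block_k_subspaces} becomes affordable.

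\textbf{Step 2: find a roughly equitable block copy in $\Dcal_{\varepsilon_2}$.} Apply Proposition \ref{prop:block_k_subspaces} to $\Dcal_{\varepsilon_2}$ with block error $\varepsilon_1$. Its hypotheses must be checked.
\begin{itemize}
\item $\varepsilon_1\ge 2/\sqrt n$ and $k\le n$: both are immediate from $n\gtrsim\varepsilon^{-2}k^2$.
\item $3k<\varepsilon_1^2 n=\varepsilon^2 n/324$: also immediate, since $n\ge 4\cdot 18^2\varepsilon^{-2}k^2$.
\item The density condition $\mu(\Dcal_{\varepsilon_2})>\delta_k=(2^k\tailsize{\varepsilon_1}{n})^{1/2^{k-1}}$.
\end{itemize}
The density condition is the main obstacle, because $\delta_k$ is close to $1$. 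Writing $\delta_k=\exp\bigl(2^{-(k-1)}(k\ln 2+\ln\tailsize{\varepsilon_1}{n})\bigr)$, it suffices to have $1-\delta_k\gtrsim 2^{-(k-1)}(\varepsilon_1^2 n/2-k\ln2)\ge 2^{-k}\varepsilon^2 n/(1300)$ exceed the missing mass $\exp(-\varepsilon^2 n/(648k))$. Using $1-e^{-x}\ge x/2$ for small $x$, and $1-e^{-x}\ge 1/2$ otherwise, this reduces to an inequality of the shape
\[
\exp\bigl(-\varepsilon^2 n/(648k)\bigr)<c\,2^{-k}\min\{1,\varepsilon^2 n\}.
\]
This holds once $\varepsilon^2 n/k\gtrsim k$, i.e. $n\gtrsim\varepsilon^{-2}k^2$. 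Since $n\ge C\varepsilon^{-2}k^2\log(1/\delta)$, it holds when $\log(1/\delta)$ is bounded below. For $\delta$ close to $1$, one first replaces $\delta$ by $\min\{\delta,1/2\}$, which only costs constants. I expect the bookkeeping of constants, to land exactly on $C=4\cdot18^2$, to be the fiddly part; the $k^3$ comes from $k$ blocks each of length $\asymp \varepsilon^{-2}k^2\log(1/\delta)$.

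\textbf{Step 3: conclude.} Step 2 gives $\Vcal^{k,n}_{\pm\varepsilon_1}(\Dcal_{\varepsilon_2})\neq\emptyset$. Proposition \ref{prop:subspace_in_Dε_then_subspace_in_D_two_eps} then yields an injective $f:Q_k\to\Dcal$ with distortion $\le 1+\varepsilon$ and rescaling $r=\frac n2(1-\varepsilon_1-4\varepsilon_2k)$, which equals $(1-\varepsilon_1-4\varepsilon_2k)N/(2k)$ when $k\mid N$. The bound on $\Emb(1+\varepsilon,\delta,k)$ follows.
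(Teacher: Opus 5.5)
Your proposal follows the paper's proof essentially step for step: the same choice $\varepsilon_1=\varepsilon/18$, $\varepsilon_2=\varepsilon/(18k)$, and the same averaging reduction to $N=kn$. It also makes the same use of Lemma~\ref{lem:measure_of_epsilon_neighborhood_of_set_is_huge} to make the complement of $\Dcal_{\varepsilon_2}$ exponentially small in $\varepsilon^2 n/k$, checks the hypotheses of Proposition~\ref{prop:block_k_subspaces} in the same way (your comparison via $1-e^{-x}\ge\min\{x/2,1/2\}$ plays the role of the paper's logarithmic comparison with $1-e^{-4k}$), and concludes via the same block-error/perturbation-error proposition.

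One caveat applies equally to the paper's ``we may assume $\delta<1/4$''. Replacing $\delta$ by $\min\{\delta,1/2\}$ does not merely cost constants: it replaces $\log(1/\delta)$ by $\max\{\log(1/\delta),1\}$ in the required size of $N$. This change is genuinely needed. For $\delta$ close to $1$ and $k\ge 2$ one has $N(\varepsilon,\delta,k)<1$, so $N=1$ and $\Dcal=Q_1$ are allowed, yet $Q_1$ cannot contain a copy of $Q_2$. The statement should therefore be read with $\max\{\log(1/\delta),1\}$ in place of $\log(1/\delta)$.
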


The rescaling estimate is crucial in the proof of Theorem \ref{thm:applicationToBMWType}.

\begin{proof}
Let $N\in\mathbb{N}$ with $N\meg N(\varepsilon,\delta,k)$ and $\Dcal\subseteq Q_N$ with $\mu\left(\Dcal\right)>\delta$, first note that we may assume that $\delta<\frac{1}{4}$ else we can work on a subset of $\mathcal{D}$. Furthermore note we can assume that $N=nk$ for $n=\left\lfloor\frac{N}{k}\right\rfloor\in\mathbb{N}$ with $n\meg C\varepsilon^{-2}k^2\log\left(\frac{1}{\delta}\right)-1$, this can be done with a standard averaging argument. As a first remark note that the constraint \ref{eq:thm_epsilon_distortion} we have imposed on $N$ implies that:
\begin{equation}\label{eq:thm_epsilon_distortion_epsilon_constrain}
\varepsilon\meg\sqrt{\frac{Ck^3\log(\frac{1}{\delta})}{N}}.
\end{equation}
The goal is to apply Propositions \ref{prop:block_k_subspaces} and \ref{prop:subspace_in_Dε_then_subspace_in_D_two_eps}. To this end we define $\varepsilon_1=\frac{\varepsilon}{18}$ and $\varepsilon_2=\frac{\varepsilon}{18k}$ and note that the following equation is satisfied 
\begin{equation}\label{eq:thm_epsilon_distortion_constrain1_fouskoma}
9(\varepsilon_1+\varepsilon_2 k)= \varepsilon<\frac{1}{4}.
\end{equation}
So we may apply Proposition \ref{prop:subspace_in_Dε_then_subspace_in_D_two_eps} as long as the assumptions for Proposition \ref{prop:block_k_subspaces} are satisfied. First note that $n\geq k$. Furthermore from the assumption that $\delta<\frac{1}{4}$
\begin{equation}\label{eq:thm_epsilon_distortion_constrain1_block}
\varepsilon_1^2 n=\frac{\varepsilon^2}{18^2}\left\lfloor\frac{N}{k}\right\rfloor\meg \frac{\varepsilon^2}{18^2} \left(C\varepsilon^{-2}k^2\log\left(\frac{1}{\delta}\right)-1\right)>7k^2>3k.
\end{equation}
In particular we also have that: 
\begin{equation}\label{eq:thm_epsilon_distortion_constrain2_block}
\varepsilon_1>k\sqrt{\frac{7}{n}}>\frac{2}{\sqrt{n}}.
\end{equation}
So we only need to verify that: 
\begin{equation}\label{eq:thm_epsilon_distortion_constrain4_block_main}
\mu\left(\Dcal_{\varepsilon_2}\right)>\left(2^k\tailsize{\varepsilon_1}{n}\right)^{2^{-(k-1)}}.
\end{equation}

From \eqref{eq:thm_epsilon_distortion}, the definition of $\varepsilon_2$, the definition of $\tailsize{\varepsilon_2}{N}$ and Hoeffding’s bound in Equation \ref{eq:Hoeffding_bounds_for_uniform_tails} we have:
\begin{align}\label{eq:thm_epsilon_distortion_from_N_to_delta}
C\varepsilon^{-2}k^3\log\left(\frac{1}{\delta}\right)		&\mik N \Longleftrightarrow 
\log\left(\frac{1}{\delta}\right)
\mik \frac{\varepsilon^2 N}{Ck^3}\\
&\Longleftrightarrow\delta	\meg \exp\left(-\frac{\varepsilon^2}{18^2k^2}\frac{N}{2}\right)^{\frac{2\cdot 18^2}{Ck}} \meg \tailsize{\varepsilon_2}{N}^{\frac{2\cdot 18^2}{Ck}}.
\end{align}
From the definition of $C$ we have that $\frac{2\cdot 18^2}{Ck}\mik 1$ hence by Lemma \ref{lem:measure_of_epsilon_neighborhood_of_set_is_huge} and another application of Hoeffding's bound we have that:
\begin{equation}\label{eq:thm_epsilon2_distortion_fouskoma_size}
\mu\left(\Dcal_{\varepsilon_2}\right)\meg 1-\tailsize{\varepsilon_2}{N}\meg 1-\exp\left(-\varepsilon_2^2\frac{N}{2}\right)=1-\exp\left(-\varepsilon^2\frac{N}{2\cdot 18^2k^2}\right).
\end{equation}
Plugging in the constraint  \ref{eq:thm_epsilon_distortion} for $N$ and noting that for $\delta<\frac{1}{4}$, it holds that $\log\left(\frac{1}{\delta}\right)>2$ we get that:
\begin{equation}\label{eq:thm_epsilon_distortion_constrain4_block_main_new}
\mu\left(\Dcal_{\varepsilon_2}\right)\meg 1-\exp\left(-\varepsilon^2\frac{C\varepsilon^{-2}k^3\log\left(\frac{1}{\delta}\right)}{2\cdot 18^2k^2}\right)>1-\exp\left(-4k\right).
\end{equation}
So for Equation \ref{eq:thm_epsilon_distortion_constrain4_block_main} to hold it is enough to show that: 
\begin{equation}\label{eq:thm_epsilon_distortion_constrain4_block_main_new_new}
1-\exp\left(-4k\right)\meg \left(2^k\tailsize{\varepsilon_1}{n}\right)^{2^{-(k-1)}}
\end{equation}
Taking logarithms and using one more time Hoeffding's bound for $\tailsize{\varepsilon_1}{n}$, we have that it is enough to show that:
\begin{equation}\label{eq:thm_epsilon_distortion_constrain4_block_main_new_new_new}
\log\left(1-\exp\left(-4k\right)\right)\meg \frac{1}{2^{k-1}}\left(k-\varepsilon_1^2\frac{n}{2\ln2}\right),
\end{equation}
we rearrange and have that the conclusion of the theorem will follow if we prove that:
\begin{equation}\label{eq:thm_epsilon_distortion_constrain4_block_main_new_new_new_new}
2^{k-1}\log\left(\frac{1}{1-\exp\left(-4k\right)}\right)\mik \left(\varepsilon_1^2\frac{n}{2\ln2}-k\right)
\end{equation}
Now note that from the calculations we made in Equation \ref{eq:thm_epsilon_distortion_constrain1_block} $2k^2< \varepsilon_1^2\frac{n}{2\ln2}-k$. Furthermore with an application of the inequality $\log(1+x)\mik\frac{x}{\ln 2}$ for $x\meg -1$ we have that
\[\log\left(\frac{1}{1-\exp\left(-4k\right)}\right)=\log\left(1+\frac{\exp\left(-4k\right)}{1-\exp\left(-4k\right)}\right)\mik\frac{\exp\left(-4k\right)}{\ln 2\left(1-\exp\left(-4k\right)\right)}.\] 
From the above consideration Equation \ref{eq:thm_epsilon_distortion_constrain4_block_main_new_new_new_new} will follow if: 
\begin{equation}\label{eq:thm_epsilon_distortion_constrain4_block_main_new_new_new_new_new}
2^{k-1}\frac{\exp\left(-4k\right)}{\ln 2\left(1-\exp\left(-4k\right)\right)}\mik 2k^2.
\end{equation}
This holds and the proof is complete.
\end{proof}

\section{\textbf{Lower bounds for undistorted cubes}}\label{sec:lowerBounds}

\subsubsection{\textbf{Lower bounds (finding dense subsets with no cubes)}}
We use the probabilistic method. We flip $2^M$ i.i.d. Bernoulli($\delta$) random variables, one for each $x \in Q_N$, and set $\Dcal$ to be the set of all points $x$ whose Bernoulli random variable is $1$.
We get a random subset with expected density $\e\mu(\Dcal) = \delta$.
Our goal is to show that $\Dcal$ does not contain any undistorted or bounded rescaling undistorted copy of $Q_k$.
\\We can bound the probability that $\Dcal$ contains a copy because
\underline{undistorted copies are rigid}.
We show that every undistorted map $f: Q_k \to Q_N$  must be of the form:
$$f(\alpha) = b + \sum_{i\in[k]:\alpha_i=1}\mathbf{1}_{I_i}\;\;\;\text{for all }\alpha \in Q_k$$
where $b\in Q_N$ and $I_1,\ldots,I_k$ are disjoint subsets of $[N]$ of equal size. The rescaling factor of the embedding is $r = |I_1| = \ldots = |I_k|$.

For the lower bound for undistorted copies, we simply apply a union bound over all copies.
For the lower bound for undistorted copies of bounded rescaling, we observe that for each fixed copy, the number of other copies it intersects is small and apply the Lovász local lemma. We start with the characterization of the undistorted copies.

\begin{propB}[Characterization of undistorted copies]\label{prop:classification_zero_distorted_copies}
Let $k,N\in\mathbb{N}$ with $N\meg k$ and $1\leq r \leq N$ and let $f: Q_k\to Q_N$ for which there exists $r\in\mathbb{N}$ such that for every $\alpha,\alpha'\in Q_k$ it holds:
\begin{equation}\label{eq:classification_zero_distorted_copies_embedding}
\norm{f(\alpha)-f(\alpha')}=r\norm{\alpha-\alpha'}.
\end{equation}
Then there exists $b\in Q_N$ and $k$ disjoint subsets $I_1,\ldots,I_k$ of $[N]$, each of size $r$, such that the map $f$ is given by:
\begin{equation}\label{eq:classification_zero_distorted_copies_embedding_characterization_of_embedding}
f(\alpha)=b+\!\!\!\!\!\!\!\sum_{i\in[k]:\alpha_i=1}\mathbf{1}_{I_i}.
\end{equation}      
\end{propB}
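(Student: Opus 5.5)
We translate so that $b\coloneqq f(\vec 0)$ and work with $g(\alpha)\coloneqq f(\alpha)-b$ (addition mod $2$, i.e.\ symmetric difference). Since translation is an isometry of $Q_N$, $g$ satisfies the same identity \eqref{eq:classification_zero_distorted_copies_embedding} and $g(\vec 0)=\vec 0$. Hence $\norm{g(\alpha)}=r\norm{\alpha}$ for every $\alpha$. We set $I_i\coloneqq g(e_i)$, viewed as a subset of $[N]$, where $e_i$ is the $i$-th unit vector; then $|I_i|=r$. Moreover, for $i\neq j$ we have $\norm{g(e_i)-g(e_j)}=r\norm{e_i-e_j}=2r$, i.e.\ $|I_i\triangle I_j|=2r=|I_i|+|I_j|$. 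This forces $I_i\cap I_j=\emptyset$, so the sets $I_1,\dots,I_k$ are pairwise disjoint.

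The remaining task is to show $g(\alpha)=\bigcup_{i:\alpha_i=1}I_i$ for every $\alpha$. We plan to do this by induction on $|\alpha|$, the cases $|\alpha|\le 1$ being settled above. The key tool is the \emph{geodesic equality case} in the Hamming cube. If $\norm{x-z}=\norm{x-y}+\norm{y-z}$, then $y$ lies in the interval between $x$ and $z$: coordinatewise $y$ agrees with $x$ wherever $x,z$ agree, i.e.\ $x\cap z\subseteq y\subseteq x\cup z$ as sets. Indeed, on each coordinate the triangle inequality $|x_t-z_t|\le|x_t-y_t|+|y_t-z_t|$ holds, and equality of the sums forces equality in every coordinate.

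For the inductive step, fix $\alpha$ with $|\alpha|=m\ge2$ and set $S=\{i:\alpha_i=1\}$. Since $\norm{\alpha}=m$, we have $\norm{g(\alpha)}=rm$. For each $i\in S$ we have $\norm{g(\alpha)-g(e_i)}=r(m-1)$ and $\norm{g(e_i)}=r$, so $g(e_i)$ lies on a geodesic from $\vec 0$ to $g(\alpha)$. By the equality case, $I_i\subseteq g(\alpha)$. Hence $\bigcup_{i\in S}I_i\subseteq g(\alpha)$. By disjointness the left side has size $rm=|g(\alpha)|$, so equality holds. (This argument does not even need the induction beyond the base cases $e_i$; it only uses distances from $\vec 0$ and from the $e_i$.) This yields \eqref{eq:classification_zero_distorted_copies_embedding_characterization_of_embedding} after translating back by $b$.

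The only delicate point is the coordinatewise equality argument in the geodesic characterization, which is elementary. We also note that $r$ must be a positive integer, since $r=\norm{f(e_1)-f(\vec 0)}$, and that disjointness of $k$ sets of size $r$ implicitly requires $kr\le N$, which is consistent with the hypotheses.
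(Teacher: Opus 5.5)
Your proof is correct and follows the same route as the paper: translate so that $f(\vec 0)=\vec 0$, take $I_i$ to be the support of $f(e_i)$, and get disjointness from $\norm{f(e_i)-f(e_j)}=2r$. You then show $I_i$ is contained in the support of $f(\alpha)$ via the equality case of the triangle inequality, which is the same computation as the paper's $\abs{A_{e_i+e_j}\cap A_{e_i}}=r$. The only difference is that you run this containment-plus-cardinality argument directly for every weight $m$, which spells out the step the paper only asserts (``by induction on the Hamming weight''), and shows no induction is actually needed.
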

\begin{proof}
Let $f: Q_k\to Q_N$ be as in the hypothesis of Proposition \ref{prop:classification_zero_distorted_copies}. Let $0^k$ be the zero vector in $ Q_k$. Without loss of generality we may assume that $f(0^k)=0^N$ (otherwise translate by $b=f(0^k)$). For every $i\in[k]$ let $e_i\in Q_k$ be the standard basic vector whose $i$-th coordinate is $1$ and all others are $0$. For each $\alpha\in Q_k$, denote by
\[
A_\alpha=\{m\in[N]: f(\alpha)_m=1\}
\]
the support of $f(\alpha)$. Define $I_i\subset[N]$ by
\[
I_i=A_{e_i}=\Set*{m\in[N]:f(e_i)_m=1}.
\]
Note that for every $i\in[k]$ we have that $\norm{f(e_i)}=\abs{A_{e_i}}=r$ and for every $i\neq j$ with $i,j\in[k]$ it holds that $\norm{f(e_i)-f(e_j)}=2r$. This implies that $I_i\cap I_j=\emptyset$ for $i\neq j$. Furthermore, $\norm{f(e_i+e_j)}=r\norm{e_i+e_j}=2r$.

We compute
\[
r=r\norm{e_i+e_j-e_i}=\norm{f(e_i+e_j)-f(e_i)}.
\]
On the other hand
\[
\norm{f(e_i+e_j)-f(e_i)}=
\abs{A_{e_i+e_j}\triangle A_{e_i}}
=\abs{A_{e_i+e_j}}+\abs{A_{e_i}}-2\abs{A_{e_i+e_j}\cap A_{e_i}}.
\]
Since $\abs{A_{e_i+e_j}}=2r$ and $\abs{A_{e_i}}=r$, we deduce that
\[
r=2r+r-2\abs{A_{e_i+e_j}\cap A_{e_i}}
\quad\Longrightarrow\quad
\abs{A_{e_i+e_j}\cap A_{e_i}}=r.
\]
Similarly $\abs{A_{e_i+e_j}\cap A_{e_j}}=r$, so we can compute that
\[
\norm{f(e_i+e_j)-(f(e_i)+f(e_j))}
=\abs{A_{e_i+e_j}\triangle(A_{e_i}\cup A_{e_j})}=0.
\]
Thus $f(e_i+e_j)=f(e_i)+f(e_j)$ for all $i\neq j$. Because $i$ and $j$ were arbitrary and the supports $I_i=A_{e_i}$ are pairwise disjoint, we can now prove by induction on the Hamming weight of $\alpha$ that
\[
f(\alpha)=\sum_{i:\alpha_i=1}f(e_i)
=\sum_{i:\alpha_i=1}\mathbf{1}_{I_i}
\]
for all $\alpha\in Q_k$, showing that $f$ is affine-linear with disjointly supported basis vectors, as claimed.
\end{proof}

\begin{remB}\label{rem:zero_distorted_copies_are_affine_subspaces}
Proposition \ref{prop:classification_zero_distorted_copies} shows that every isometric copy of $Q_k$ inside $Q_N$ (with a fixed scaling $r$) is, up to translation, an affine subspace spanned by $k$ disjointly supported vectors of weight $r$.  In particular, the number of such copies inside $Q_N$ can be crudely bounded as follows: choose a translation vector $b$ ($2^N$ possibilities) and for each coordinate $m\in[N]$ decide whether it is fixed by $b$ or belongs to one of the $k$ disjoint blocks $I_i$; this gives at most $(k+1)^N$ choices.  Hence there are no more than $(2(k+1))^N$ rescaled isometric copies of $Q_k$ in $Q_N$.
By the same reasoning, there are no more than $2^N R N^{Rk}$ rescaled isometric copies with rescaling $\leq R$ (there are $2^N$ choices for $b$ and for each of the $R$ possible rescalings there are $\leq N^{Rk}$ possible sets $I_1,...,I_k$.)
\end{remB}
$\mbox{}$\\
We now use a probabilistic construction to show that $\Emb(1,\delta,k)$ must be exponentially large in $k$.

\begin{thmB}[Lower bound for undistorted copies]\label{thm:lower_bound_on_Emb(1,delta,k)_weaker}
Let $k\ge 1$ and $0 < \delta < 1/2$. Then 
\begin{equation}\label{eq:thm53-weaker-claim}
\Emb(1,\delta,k) \;\meg\; \frac{1}{100}\,\frac{2^k \log\bigl(\tfrac{1}{\delta}\bigr)}{\log (k+1)}.
\end{equation}
\end{thmB}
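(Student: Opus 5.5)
We use the probabilistic construction outlined at the start of this section: for a given $N$, include each $x\in Q_N$ in $\Dcal$ independently with probability $\delta'$. The aim is to show that if $N$ is below the right-hand side of \eqref{eq:thm53-weaker-claim}, then with positive probability $\Dcal$ has density $\geq\delta$ and contains no rescaled isometric copy of $Q_k$. The tool is a union bound over all copies, whose number is controlled by Remark \ref{rem:zero_distorted_copies_are_affine_subspaces}.

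By Proposition \ref{prop:classification_zero_distorted_copies}, every undistorted copy is an image $\{b+\sum_{i:\alpha_i=1}\mathbf{1}_{I_i}\}$ with $I_1,\dots,I_k$ disjoint and nonempty. Hence each copy consists of exactly $2^k$ distinct points, and by Remark \ref{rem:zero_distorted_copies_are_affine_subspaces} there are at most $(2(k+1))^N$ copies. The expected number of copies contained in $\Dcal$ is therefore at most
\[
(2(k+1))^N(\delta')^{2^k}=2^{N(1+\log(k+1))-2^k\log(1/\delta')}.
\]
Taking $\delta'=2\delta$ (legitimate since $\delta<1/2$), this is $<1/2$ provided
\[
N\bigl(1+\log(k+1)\bigr)\leq 2^k\log\bigl(1/(2\delta)\bigr)-1 .
\]
For $N\leq \frac{1}{100}2^k\log(1/\delta)/\log(k+1)$ the left side is at most $\frac{2}{100}2^k\log(1/\delta)$ when $k\geq1$, since then $1+\log(k+1)\leq 2\log(k+1)$. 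So the union bound succeeds whenever $\log(1/(2\delta))\geq \frac12\log(1/\delta)$, which holds for $\delta\leq1/4$.

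Density is handled separately. We have $\e|\Dcal|=2\delta 2^N$, and by Chernoff $\prob(|\Dcal|<\delta2^N)\leq e^{-\delta 2^N/4}$, which is below $1/2$ once $\delta2^N\geq 3$. Combining this with the previous paragraph, with positive probability $\Dcal$ is $\delta$-dense and contains no copy, so $\Emb(1,\delta,k)>N$.

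The main obstacle is the edge regimes rather than the core estimate. The first is when $\delta2^N$ is small, i.e. $N<\log(3/\delta)$. Here the claimed bound is beaten directly by the trivial bound $\Emb\geq k+\log(1/\delta)$, or by a deterministic choice: take any $\lceil\delta2^N\rceil$ points, for instance those inside a Hamming ball of radius less than $1$ when $k\geq1$, so they contain no copy. Alternatively one can check that the claimed quantity is then below $\log(1/\delta)$ unless $2^k/\log(k+1)$ is large, and in that case increase $N$ to the threshold. The second regime is $1/4<\delta<1/2$, where $2\delta$ may be close to $1$. There I would use $\delta'=(1+\delta)/2$ in place of $2\delta$, or note that $\log(1/\delta)\leq1$ makes the target at most $2^k/(100\log(k+1))$, and handle it by monotonicity in $\delta$ via the case $\delta=1/4$, adjusting constants, which the factor $1/100$ allows.
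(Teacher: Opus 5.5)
Your main-regime argument is the paper's. It uses a random subset and the rigidity of Proposition~\ref{prop:classification_zero_distorted_copies}: each copy has exactly $2^k$ points, and there are at most $(2(k+1))^N$ copies. A union bound and a Chernoff bound for the density finish it. With $p=2\delta$ your computation is correct for $\delta\le 1/4$ and $\delta 2^N\ge 3$.

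\textbf{The gap.} It is in the regime $1/4<\delta<1/2$. Your choice $p=2\delta$ forces you to treat this range separately: as $\delta\to 1/2$, $\log(1/(2\delta))\to 0$ while the target stays of order $2^k/\log(k+1)$. Your second proposed patch there does not work, for two reasons.
\begin{itemize}
\item The claim $\log(1/\delta)\le 1$ is false in this range, where $\log(1/\delta)\in(1,2)$.
\item Monotonicity in $\delta$ runs the other way. Every $\delta_2$-dense set is $\delta_1$-dense for $\delta_1<\delta_2$, so $\Emb(1,\delta,k)$ is non-increasing in $\delta$. A lower bound at $\delta=1/4$ therefore says nothing about $\Emb(1,\delta,k)$ for $\delta>1/4$: a counterexample must have density at least $\delta$, and a $1/4$-dense set does not qualify. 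A valid monotonicity reduction would have to go through the endpoint $\delta=1/2$, which is exactly where $p=2\delta$ breaks down.
\end{itemize}

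Your first patch, $p=(1+\delta)/2<3/4$, does work, but it needs to be checked.
\begin{itemize}
\item A positive integer $N$ below the target exists only when $k\ge 8$, since the target is then below $\frac{2}{100}2^k/\log(k+1)$.
\item The union bound needs $N\log(2(k+1))<2^k\log(1/p)-1$. This holds because the left side is at most $\frac{4}{100}2^k$, while $\log(1/p)>\log(4/3)>0.4$.
\item In the Chernoff step the relative deviation is $(1-\delta)/(1+\delta)>1/3$ and the mean exceeds $\frac58 2^N$. This gives failure probability below $1/2$ once $N\ge 5$. The remaining $N\le 4<k$ are covered by the trivial bound.
\end{itemize}

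\textbf{The paper's cleaner route.} The paper avoids the case split by taking $p=3\delta/2$ for every $\delta<1/2$. Since $\log(1/\delta)>1\ge\frac53\log\frac32$, one has $\log(1/p)\ge\frac25\log(1/\delta)$. Hence the expected number of copies is at most $2^{-\frac{19}{50}2^k\log(1/\delta)}$, uniformly in $\delta$.

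It also handles small $N$ more cleanly. If $N<k-1+\log(1/\delta)$, an affine $(k-1)$-dimensional subcube has density $2^{k-1-N}>\delta$ and only $2^{k-1}$ points. Otherwise the constraint $N\le\frac{1}{100}2^k\log(1/\delta)/\log(k+1)$ forces $k\ge 9$. Then $\delta 2^N\ge 2^{k-1}$, so the Chernoff failure probability is at most $e^{-2^{k-1}/12}$.

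\textbf{A small correction.} Drop the aside about a Hamming ball of radius less than $1$. That ball is a single point, and for $k=1$ any two distinct points already form an undistorted copy of $Q_1$. It is your appeal to the trivial bound $k+\log(1/\delta)$ that actually covers $N<\log(3/\delta)$.
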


\begin{proof}
Let $N$ be an integer such that
\begin{equation}\label{eq:thm53-weaker-N}
N \;\mik\; \frac{1}{100}\,\frac{2^k \log\bigl(\tfrac{1}{\delta}\bigr)}{\log (k+1)}.
\end{equation}
We show that there exists $\Dcal\subseteq Q_N$ with $\mu(\Dcal)>\delta$ and containing no rescaled isometric copy of $Q_k$.

If
\begin{equation}\label{eq:thm53-weaker-smallN}
N<k-1+\log\bigl(\tfrac{1}{\delta}\bigr),
\end{equation}
then an affine $(k-1)$-dimensional subcube $\Dcal\subseteq Q_N$ satisfies $\mu(\Dcal)=2^{k-1-N}>\delta$ and $|\Dcal|=2^{k-1}<2^k$, so it contains no rescaled isometric copy of $Q_k$. Thus we may assume
\begin{equation}\label{eq:thm53-weaker-largeN}
N\;\meg\;k-1+\log\bigl(\tfrac{1}{\delta}\bigr).
\end{equation}
For $1\le k\le 8$ one has \(\frac{1}{100}\frac{2^k}{\log(k+1)}<1\), so \eqref{eq:thm53-weaker-N} and \eqref{eq:thm53-weaker-largeN} cannot hold simultaneously. Hence we may also assume $k\ge 9$.

Now choose a random subset $\Dcal\subseteq Q_N$ by including each point independently with probability $p\coloneqq 3\delta/2$. Then $\mathbb{E}|\Dcal|=p2^N$, and by the lower-tail Chernoff bound for sums of independent Bernoulli random variables \cite[Theorem~4.5]{MitzenmacherUpfal2005},
\begin{equation}\label{eq:thm53-weaker-chernoff}
\prob\bigl(|\Dcal|\mik \delta 2^N\bigr)
\mik
\exp\!\Bigl(-\frac{\delta 2^N}{12}\Bigr)
\mik
\exp\!\Bigl(-\frac{2^{k-1}}{12}\Bigr)
<\frac14,
\end{equation}
where the second inequality follows from \eqref{eq:thm53-weaker-largeN}.

Let $\mathcal M$ be the collection of all rescaled isometric copies of $Q_k$ inside $Q_N$. By Remark~\ref{rem:zero_distorted_copies_are_affine_subspaces}, $|\mathcal M|\mik (2(k+1))^N$. Let
\[
P_{\text{bad}}\coloneqq \prob\bigl(\exists M\in \mathcal M:\ M\subseteq \Dcal\bigr).
\]
We claim that $P_{\text{bad}}<1/4$. For each fixed $M\in\mathcal M$ one has
\[
\prob(M\subseteq \Dcal)=\bigl(\tfrac{3\delta}{2}\bigr)^{2^k},
\]
so by the union bound,
\begin{equation}\label{eq:thm53-weaker-secondterm}
P_{\text{bad}}\mik (2(k+1))^N\bigl(\tfrac{3\delta}{2}\bigr)^{2^k}.
\end{equation}
Since $e^{-2}<1/4$, it is enough to show that
\[
(2(k+1))^N\bigl(\tfrac{3\delta}{2}\bigr)^{2^k}\mik e^{-2}.
\]
Taking logarithms, this is equivalent to
\[
N\log\bigl(2(k+1)\bigr)+2^k\log\bigl(\tfrac{3\delta}{2}\bigr)\mik -2.
\]
This indeed holds by \eqref{eq:thm53-weaker-N}, the elementary inequalities
\[
\log\bigl(\tfrac{3\delta}{2}\bigr)=\log\bigl(\tfrac32\bigr)-\log\bigl(\tfrac1\delta\bigr)\mik -\frac25\log\bigl(\tfrac1\delta\bigr),
\qquad
\log\bigl(2(k+1)\bigr)\mik 2\log(k+1),
\]
and the facts that $k\ge 9$ and $\log(1/\delta)>1$, since these give
\[
N\log\bigl(2(k+1)\bigr)+2^k\log\bigl(\tfrac{3\delta}{2}\bigr)
\mik
\frac{1}{50}\,2^k\log\bigl(\tfrac1\delta\bigr)-\frac25\,2^k\log\bigl(\tfrac1\delta\bigr)
=
-\frac{19}{50}\,2^k\log\bigl(\tfrac1\delta\bigr)
\mik -2.
\]
Hence $P_{\text{bad}}<1/4$. Combining this with \eqref{eq:thm53-weaker-chernoff}, we obtain
\[
\prob\bigl(|\Dcal|\mik \delta 2^N\bigr)+\prob\bigl(\exists M\in \mathcal M:\ M\subseteq \Dcal\bigr)
<\frac14+\frac14<1.
\]
Thus there exists $\Dcal\subseteq Q_N$ with $\mu(\Dcal)>\delta$ and containing no rescaled isometric copy of $Q_k$. Therefore $\Emb(1,\delta,k)>N$. Since $N$ was arbitrary subject to \eqref{eq:thm53-weaker-N}, the proof is complete.
\end{proof}

\begin{thmB}[Lower bound for undistorted copies of bounded rescaling]\label{thm:lower_bound_on_boudned_rescaling}
Let $k\ge 4$ and $0 < \delta < 1/2$ and $R \geq 1$. Then 
\[
\Emb^{\leq R}(1,\delta,k) \;\meg\; \left(\dfrac{2^{-k}}{12R}\right)^{1/Rk} 2^{\log(1/\delta)\; 2^k/(Rk)}.
\]
\end{thmB}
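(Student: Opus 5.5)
We use the random construction from the start of this section together with the Lovász local lemma, as announced there. Fix $N$ strictly below the claimed bound. Choose $\Dcal\subseteq Q_N$ randomly by including each point independently with probability $p\coloneqq 3\delta/2$, exactly as in Theorem~\ref{thm:lower_bound_on_Emb(1,delta,k)_weaker}. Let $\mathcal M_R$ be the family of rescaled isometric copies of $Q_k$ with rescaling $\le R$. By Proposition~\ref{prop:classification_zero_distorted_copies}, each $M\in\mathcal M_R$ is $\{b+\sum_{i:\alpha_i=1}\mathbf 1_{I_i}\}$ with disjoint $|I_i|=r\le R$. For each $M$, let $A_M$ be the event $M\subseteq\Dcal$, so $\prob(A_M)=p^{2^k}$. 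We aim to show that, with probability bounded below, no $A_M$ occurs and $|\Dcal|>\delta 2^N$.

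\textbf{Dependency graph.} The event $A_M$ is determined by the $2^k$ coordinates indexed by $M$. So $A_M$ is mutually independent of all $A_{M'}$ with $M'\cap M=\emptyset$. Fix a point $x\in Q_N$. We count the copies containing $x$ as follows: choose the rescaling $r\le R$, the sets $I_1,\dots,I_k$ (at most $N^{rk}\le N^{Rk}$ choices), and which $\alpha$ is mapped to $x$ (at most $2^k$ choices, which determines $b$). This gives at most $R\,2^k N^{Rk}$ copies through $x$. Hence each $M$ meets at most $\Delta\coloneqq 2^k\cdot R\,2^kN^{Rk}$ other copies.

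\textbf{Applying the local lemma.} We use the symmetric Lov\'asz local lemma, whose condition is $e\,p^{2^k}(\Delta+1)\le 1$. Taking logarithms, this requires roughly
\[
N^{Rk}\le \frac{2^{-2k}}{eR}\Bigl(\tfrac{2}{3\delta}\Bigr)^{2^k}.
\]
Under this condition, $\prob(\bigcap_M\overline{A_M})\ge (1-1/(\Delta+1))^{|\mathcal M_R|}>0$. Solving for $N$ gives $N\lesssim (2^{-k}/(cR))^{1/(Rk)}\,2^{\log(1/\delta)2^k/(Rk)}$ up to the $(3/2)^{2^k}$ factor. That factor should be absorbed using the constants and $k\ge 4$, or we can take $p$ closer to $\delta$, e.g.\ $p=\delta(1+\eta)$. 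Matching the exact constant $12$ and the exponent $2^{-k}$ (rather than $2^{-2k}$) is bookkeeping: we count the dependencies more carefully, and, if needed, pass to the asymmetric local lemma, or note that the number of copies through $x$ can be bounded by $R\,2^k N^{Rk}/\ldots$.

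\textbf{Main obstacle.} The hardest step is combining the local lemma with the density requirement $|\Dcal|>\delta 2^N$. The local lemma only gives an exponentially small positive probability, of order $e^{-p^{2^k}|\mathcal M_R|}$. This probability may be much smaller than the Chernoff failure probability $\exp(-\Omega(\delta2^N))$, so a plain union bound as in Theorem~\ref{thm:lower_bound_on_Emb(1,delta,k)_weaker} will not suffice. We would first try to handle this with a Harris/FKG argument. The event $\bigcap\overline{A_M}$ is decreasing, so we condition on it instead. One option is to compare the conditional expectation of $|\Dcal|$ using the lopsided local lemma bound $\prob(x\in\Dcal\mid\bigcap\overline{A_M})\ge p(1-O(\Delta p^{2^k}))$. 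A cleaner alternative is alteration: take the random $\Dcal$, and delete one point from each copy $M$ fully contained in $\Dcal$. The expected number of deletions is $|\mathcal M_R|p^{2^k}\le 2^N\cdot R\,N^{Rk}p^{2^k}$. Under the same condition on $N$, this is at most $\tfrac{\delta}{4}2^N$, and we then conclude by Markov and Chernoff. We expect the alteration route to work and to give exactly a bound of the stated shape, with the factor $12R$ coming from the Chernoff/Markov constants.
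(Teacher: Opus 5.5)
Your final route (alteration) is valid and genuinely different from the paper's. However, your reason for abandoning the paper's route is mistaken, and matching the displayed constant is more than bookkeeping.

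\textbf{The union bound does work, and alteration is a simpler alternative.} The paper does exactly what you say will not work. It takes the random $\Dcal$ with $p=3\delta/2$ and uses the local lemma to get $\prob[\text{no copy}]\ge(1-p^{2^k})^{|\mathcal{M}_{\le R}|}\ge\exp(-2p^{2^k}\,2^NRN^{Rk})$. It then compares this with Chernoff's $\prob[|\Dcal|\le\delta2^N]\le\exp(-\delta2^N/12)$ via a union bound.

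There is no obstacle, because $|\mathcal{M}_{\le R}|\le2^NRN^{Rk}$, so both exponents are linear in $2^N$. With the estimate you quote yourself, $e^{-O(p^{2^k}|\mathcal{M}_R|)}$, the comparison reduces to $RN^{Rk}p^{2^k}\lesssim\delta$. That is a constraint on $N$ of the same shape as the LLL condition. Your worry is justified only for the symmetric conclusion you used, $(1-\frac{1}{\Delta+1})^{|\mathcal{M}_R|}\approx e^{-2^N/4^k}$, which does lose to Chernoff when $\delta\lesssim4^{-k}$. The product bound $\prod_M(1-p^{2^k})$ in fact holds unconditionally by Harris's inequality, since every event $\{M\not\subseteq\Dcal\}$ is decreasing.

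Your extra factor $2^k$ for choosing which $\alpha$ maps to $x$ is unnecessary. The image of a copy is the coset $x+\mathrm{span}\{\mathbf{1}_{I_1},\dots,\mathbf{1}_{I_k}\}$, so it is determined by $x$ and $\{I_i\}$. This gives the paper's $d\le2^kRN^{Rk}$ and explains your $2^{-2k}$.

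Your alteration argument is correct and simpler than the paper's. It needs no local lemma, Chernoff or Markov, only linearity of expectation:
$\e\bigl[|\Dcal|-\#\{M\in\mathcal{M}_R:M\subseteq\Dcal\}\bigr]\ge(p-RN^{Rk}p^{2^k})2^N$.
So deleting one point from each surviving copy leaves density $\ge\delta$ whenever $RN^{Rk}p^{2^k}\le p-\delta$. This condition is weaker than the paper's comparison $2RN^{Rk}p^{2^k}<\delta/12$, and it drops the LLL condition $e\,2^kRN^{Rk}p^{2^k}\le1$ entirely. Hence alteration handles every $N$ the paper does. Note that it is a separate condition, not implied by your LLL condition when $\delta\lesssim4^{-k}$. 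Your phrase ``under the same condition on $N$'' should be replaced by it.

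\textbf{Constants.} The loss $(3/2)^{2^k/(Rk)}$ coming from $p=3\delta/2$ is unbounded in $k$, so it cannot be absorbed into constants. Taking $p=\delta(1+\eta)$ with $\eta=1/(2^k-1)$ in the alteration argument removes it. This gives $\Emb^{\le R}(1,\delta,k)>\bigl(\frac{2^{-k}}{eR}\bigr)^{1/(Rk)}\delta^{-(2^k-1)/(Rk)}$.

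That still falls short of the displayed formula by the factor $(12\delta/e)^{1/(Rk)}$ when $\delta<e/12$. First-moment arguments of this kind give exponent $(2^k-1)/(Rk)$ on $1/\delta$, not $2^k/(Rk)$. The paper's own final computation, ending with $\bigl(\frac{2^{-k}}{12R}\bigr)^{1/(Rk)}e^{\ln(2/3\delta)(2^k-1)/(Rk)}$, has the same defect. So neither argument yields the displayed expression literally. What both establish is the $\exp\bigl(e^{\Omega_R(k)}\log(1/\delta)\bigr)$ lower bound of Theorem~\ref{thm:MAIN}.
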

We will use the following standard quantitative version of the Lovász-Local-Lemma:

\begin{lemB}[detailed Lovász-Local-Lemma]\label{lem:detailedLLL}
    \;\\Let $A_1,...,A_J$ be a sequence of events such that each event occurs with probability at most p and such that each event is independent of all the other events except for at most d of them.
    \\\textbf{If} the condition $e\cdot p\cdot d\leq 1$ holds, 
    \\\textbf{then}  there is a nonzero probability that none of the events occurs, in fact:
    $$\mathbb{P}[\cap_{j=1}^J A_j^c] \geq (1-p)^J.$$
\end{lemB}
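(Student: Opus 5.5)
The plan is the standard inductive proof of the symmetric local lemma, tracking the conditional probabilities explicitly. One caveat first: as literally written, the lower bound $(1-p)^J$ cannot hold in general. Take $J=2$ and $d=1$, with $A_1,A_2$ disjoint events of probability $p\le 1/e$. Then $e\cdot p\cdot d\le 1$, but
\[
\mathbb{P}[A_1^c\cap A_2^c]=1-2p<(1-p)^2 .
\]
What the standard argument does give is $\mathbb{P}[\cap_{j} A_j^c]\ge (1-ep)^J$, together with the weaker but classical bound $\bigl(1-\tfrac{1}{d+1}\bigr)^J$. I will prove the version $(1-ep)^J$. For the application to Theorem \ref{thm:lower_bound_on_boudned_rescaling} this costs only an absolute constant in the exponent, since $(1-ep)^J\ge e^{-2epJ}$ once $ep\le 1/2$.

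For $i\in[J]$ let $\Gamma(i)$ be the set of at most $d$ indices on which $A_i$ may depend, so that $A_i$ is independent of the family $\{A_j: j\notin\Gamma(i)\cup\{i\}\}$. The key claim, proved by induction on $|S|$, is:
\[
\text{for every } S\subset[J]\setminus\{i\} \text{ with } \mathbb{P}\Bigl[\bigcap_{j\in S}A_j^c\Bigr]>0,\quad \mathbb{P}\Bigl[A_i\,\Big|\,\bigcap_{j\in S}A_j^c\Bigr]\le ep .
\]

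For the inductive step, split $S=S_1\cup S_2$, where $S_1=S\cap\Gamma(i)$ and $S_2=S\setminus\Gamma(i)$. If $S_1=\emptyset$, independence gives conditional probability at most $p$. Otherwise write the conditional probability as a ratio:
\[
\mathbb{P}\Bigl[A_i\,\Big|\,\bigcap_{S}A_j^c\Bigr]=\frac{\mathbb{P}\bigl[A_i\cap\bigcap_{S_1}A_j^c\,\big|\,\bigcap_{S_2}A_j^c\bigr]}{\mathbb{P}\bigl[\bigcap_{S_1}A_j^c\,\big|\,\bigcap_{S_2}A_j^c\bigr]}.
\]
Dropping $\bigcap_{S_1}A_j^c$ and using independence bounds the numerator by $p$. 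For the denominator, order $S_1=\{j_1,\dots,j_s\}$ with $s\le d$ and expand it as a telescoping product of factors $1-\mathbb{P}[A_{j_t}\mid \cdots]$, each conditioned on strictly fewer than $|S|$ complements. By the inductive hypothesis each factor is at least $1-ep$, so the denominator is at least $(1-ep)^d$.

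The main obstacle is closing the numerical step: we need $(1-ep)^d\ge 1/e$ whenever $epd\le 1$. This is where the hypothesis is used. I would handle it via $(1-x)^d\ge (1-1/d)^d\ge 1/e$ for $x\le 1/d$ and $d\ge 2$, treating $d=1$ directly (there the denominator is at least $1-ep\ge 1-1/e>1/e$) and $d=0$ as full independence. The ratio is then at most $p\cdot e$, which completes the induction.

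Finally, the chain rule gives
\[
\mathbb{P}\Bigl[\bigcap_{j=1}^J A_j^c\Bigr]=\prod_{i=1}^J\Bigl(1-\mathbb{P}\Bigl[A_i\,\Big|\,\bigcap_{j<i}A_j^c\Bigr]\Bigr)\ge (1-ep)^J>0 .
\]
Positivity of each partial intersection, which is needed for the conditioning to make sense, follows along the way from the same product bound applied to the initial segments.
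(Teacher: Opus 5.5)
Your counterexample to the stated bound $(1-p)^J$ is correct. Your route is also the one the paper relies on: its proof only cites Alon--Spencer, Lemma~5.1.1, which with all weights $x_i=ep$ gives $\prod_i(1-x_i)$, not $(1-p)^J$. The gap is in the step that closes your induction, which is false.

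For every $d\ge1$ we have $d\ln(1-1/d)<-1$, that is, $(1-1/d)^d<1/e$. The sequence increases to $1/e$ from below; for example $d=2$ gives $1/4$. So $ep\le 1/d$ does not give $(1-ep)^d\ge 1/e$. When $ep$ is close to $1/d$ your ratio bound $p/(1-ep)^d$ exceeds $ep$, and the inductive claim is not reproduced. As written, your argument proves the key claim only when $ep\le 1-e^{-1/d}$, which is strictly stronger than $epd\le1$.

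Your special cases have the same problem:
\begin{itemize}
\item For $d=1$, the hypothesis gives only $ep\le 1$, hence $1-ep\ge 0$. Your bound $1-ep\ge 1-1/e$ would need $ep\le 1/e$.
\item For $d=0$, the hypothesis does not restrict $p$ at all. Then $1-ep$ can be negative, and $(1-ep)^J$ can even exceed $1$ (take $J=2$, $p=1$). So your stated conclusion also needs $ep\le 1$.
\end{itemize}

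This cannot be repaired by picking a different constant weight. An Erd\H{o}s--Lov\'asz induction with $x_i\equiv x$ requires $p\le x(1-x)^d$. But
\[
\max_{x} x(1-x)^d=\frac{d^d}{(d+1)^{d+1}}<\frac{1}{ed},
\]
because $(1+1/d)^{d+1}>e$. Hence the boundary case $epd=1$ is out of reach of this method and needs a sharper tool, such as Shearer's theorem.

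The clean fix is to assume $ep(d+1)\le 1$, the hypothesis of Alon--Spencer's symmetric corollary. Then
\[
(1-ep)^d\ge\Bigl(\frac{d}{d+1}\Bigr)^d=e^{-d\ln(1+1/d)}>\frac1e,
\]
and your induction and chain-rule argument go through verbatim, giving $(1-ep)^J$. Also $1-ep\ge\frac{d}{d+1}$, so the conditioning events have positive probability when $d\ge1$, and $d=0$ is plain independence. For the application in Theorem~\ref{thm:lower_bound_on_boudned_rescaling}, replacing $d$ by $d+1$ (or by $2d$) only changes absolute constants.
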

\begin{proof}
    See Chapter 5 Lemma 5.1.1 in the book \cite{alon2016probabilistic}.
\end{proof}

\begin{proof}[Proof of Theorem \ref{thm:lower_bound_on_boudned_rescaling} (Lower bound for undistorted copies of bounded rescaling)]
    Let $N$ be an integer to be chosen later.
    Exactly as in Theorem \ref{thm:lower_bound_on_Emb(1,delta,k)_weaker} consider a random subset $\Dcal\subseteq Q_N$ obtained by including each point independently with probability $p\coloneqq 3\delta/2$.
    That way $\mathbb{E}|\Dcal|=p\,2^N$, and by the multiplicative Chernoff bound,
\[
\prob\bigl(|\Dcal|\mik \delta 2^N\bigr)
   \leq \exp\!\Bigl(-\frac{\delta 2^N}{12}\Bigr).
\]

Let $M_{\leq R}$ be the collection of all isometric copies of $Q_k$ inside $Q_N$ with rescaling $\leq R$.  
By Remark~\ref{rem:zero_distorted_copies_are_affine_subspaces}, $|M_{\leq R}|\mik 2^N R N^{Rk}$.
For a copy $M\in M_{\leq R}$, the probability that $M\subseteq\Dcal$ equals $p^{2^k}$.

Now fix any copy $M_1 \in M_{\leq R}$ and count the number of copies $M_2$ such that $M_1 \cap M_2 \neq \emptyset$.
\\We fix $x\in M_1$ and count the number of copies $M_2$ with
$x \in M_2$. This number is
$$|\{M_2 \in M_{\leq R}:x \in M_2\}|\leq \sum_{r=1}^R \binom{N}{r}^k \leq R N^{Rk}$$
Going over all $x \in M_1$ we get: $d := |\{M_2 \in M_{\leq R}:M_1 \cap M_2 \neq \emptyset \}| \leq 2^k R N^{Rk}$.

Finally, we check the numerics and apply the Lov\`{a}sz Local Lemma.
The assumption of Lemma \ref{lem:detailedLLL} is $edp^{2^k} \leq 1$ and becomes
$$e\;2^k R N^{Rk} \left(\dfrac{3\delta}{2}\right)^{2^k} \leq 1 \iff N^{Rk} \leq \dfrac{2^{-k}}{4R} e^{2^k \ln(2/3\delta)}$$
$$\iff N \leq \dfrac{2^{-k}}{4R} e^{\ln(2/3\delta)\; 2^k/(Rk)} = \exp{({\ln(1/\delta) \exp{(\Theta(k))}})}.$$
We pick $N$ such that this constraint is satisfied.
By the Lov\`{a}sz Local Lemma we arrive that
$$\mathbb{P}[\Dcal\;\text{ contains no copy}] > (1-(3\delta/2)^{2^{k}})^{2^N R N^{Rk}}\geq e^{-2 (3\delta/2)^{2^k} 2^N R N^{Rk}}$$
(where the second inequality follows from the fact that for all $0<x<1/2$, $e^{-2x}\leq 1-x$).

In order to conclude from a union bound that
$$\mathbb{P}[\mu(\Dcal)>\delta\;\text{ and }\Dcal\;\text{ contains no copy}] > 0$$
we need to pick $N$ such that the following additional constraint is satisfied:
$$e^{-\delta 2^N /12} <^{?} e^{-2 \delta^{2^k} 2^N R N^{Rk}} \iff \delta /12 > 2 \delta^{2^k} R N^{Rk} \iff N \leq \left(\dfrac{2^{-k}}{12R}\right)^{1/Rk} e^{\ln(2/3\delta)\; (2^k - 1)/(Rk)}.$$

\end{proof}

\section{\textbf{Metric embeddings of path spaces into dense subsets of path spaces}}\label{sec:pathSpaces}

In this section we prove Theorem \ref{thm:pathSpaces}. We restate the theorem below with explicit constants.

\begin{thmB}[Metric embeddings of paths into dense subsets of paths]
\;\\For all $k\geq 3, 0 < \delta < 1, 0 < \varepsilon \leq 1$, we have:
    $$e^{0.1 k \log(1/\delta)} \leq Path(1+\varepsilon,\delta,k) \leq e^{100 k \varepsilon^{-1} \log(1/\delta)}.$$
\end{thmB}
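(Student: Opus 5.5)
For the upper bound we run a geometric-scale density increment. For the lower bound we build a set as a product of a random construction with a thinning.

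\textbf{Upper bound.} Fix $\lambda := 1+\varepsilon/4$ and look for embeddings of a special shape. We want points $a_1<a_2<\dots<a_k$ in $\Dcal$ whose consecutive gaps $g_i=a_{i+1}-a_i$ all lie in one interval $[L, \lambda L)$. Any such configuration gives a $(1+\varepsilon)$-bi-Lipschitz map $f(i)=a_i$ with rescaling $r=L$. Indeed, for $i<j$ we have $f(j)-f(i)=\sum_{l=i}^{j-1} g_l\in [L(j-i), \lambda L (j-i))$, so the map is $\lambda$-bi-Lipschitz.

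To find such a configuration, we partition $[N]$ into consecutive intervals $J$ whose lengths grow geometrically. The scale parameter $L$ is chosen so that a dense subset must hit many intervals in a structured way. The simplest concrete version is the following. Split $[N]$ into blocks of length $m$. Inside each block, the density of $\Dcal$ either stays about $\delta$, or some sub-block has density at least $\delta(1+c\varepsilon/k)$-ish, or at least $\delta'$ noticeably larger. We then iterate the density increment, which terminates after $O(k\varepsilon^{-1}\log(1/\delta))$ steps, each step costing a constant factor in length. This yields $N=e^{O(k\varepsilon^{-1}\log(1/\delta))}$.

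In the non-increment case, $\Dcal$ is roughly equidistributed across the $k$ consecutive sub-intervals $I_1,\dots,I_k$ of length $\ell$ at scale $\ell$. We then pick $a_i\in \Dcal\cap I_i$ with the offsets inside $I_i$ controlled to within $\varepsilon\ell/4$. This uses pigeonhole: divide each $I_i$ into $4/\varepsilon$ cells and find a cell index occupied in all $I_i$, which is guaranteed once each $I_i$ meets more than a $(1-1/k)$ fraction of cell indices. Otherwise some cell has density boosted by a factor $1+\Omega(1/k)$, and that is exactly the increment.

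The main obstacle is making the increment factor large enough. The total number of iterations must be $O(k\varepsilon^{-1}\log(1/\delta))$ rather than $k^2$, while each iteration shrinks the length by only a factor $O(1)$ (or $e^{O(1)}$ per unit of $\log$-density gained). I would try first to arrange the increment multiplicatively: either $\Dcal$ meets every one of $k$ cell-classes with density at least $\delta/2$, or it concentrates, giving a density gain $\delta\mapsto \delta\cdot(k\text{-dependent factor})$ at a length cost of $(k/\varepsilon)$. Balancing these should give $\log N \lesssim k\varepsilon^{-1}\log(1/\delta)$.

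\textbf{Lower bound.} We construct $\Dcal\subset[N]$ with $N=e^{0.1k\log(1/\delta)}=\delta^{-0.1k}$ containing no $2$-bi-Lipschitz copy of $[k]$ (and hence no $(1+\varepsilon)$ copy). Use a base-$b$ digit construction with $b\approx 1/\delta$. Write $x\in[N]$ in base $b$ with $t=\lfloor 0.1k\log_b(1/\delta)\cdot\ldots\rfloor\approx 0.1k$ digits, and let $\Dcal$ consist of the $x$ whose digits avoid a fixed subset chosen so that the density is at least $\delta$.

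A $(1+\varepsilon)$-copy of $[k]$ with rescaling $r$ has all consecutive gaps comparable to $r$, so it spans about $kr$ while having consecutive gaps at least $r$. Let $s$ be the digit position with $b^s\approx r$. The copy has $k$ points across about $k$ consecutive values of digit $s$ (up to carries), so it must hit a forbidden digit unless $k< b$. I would instead use the simpler construction $\Dcal=\{x: x \bmod (\lfloor 1/\delta\rfloor+1)\ldots\}$ iterated hierarchically. The point to check is the density/length tradeoff, giving the constant $0.1$; that verification is routine.
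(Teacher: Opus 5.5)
\textbf{Upper bound.} Your target shape (all consecutive gaps in $[L,\lambda L)$) is fine. The cell argument can also be made correct as a global count. If no cell index is occupied in all of $I_1,\dots,I_k$, then each of the $4/\varepsilon$ indices is empty in some $I_i$. So at least a $1/k$ fraction of the $4k/\varepsilon$ cells of the current interval $J$ misses $\Dcal$, and some cell has density $\ge \delta k/(k-1)$. Phrased per $I_i$, as you wrote it, it gives no increment over $\delta$ when that $I_i$ is itself sparse.

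The real problem is that this increment passes from $J$ to a single cell of length $\varepsilon|J|/(4k)$. Each step therefore costs a factor $4k/\varepsilon$ in length, not a constant. After the $\approx k\ln(1/\delta)$ steps needed, you only get $\log N=O(k\log(1/\delta)\log(k/\varepsilon))$. This is essentially Dumitrescu's bound. It is incomparable with the claimed one (better for $\varepsilon\lesssim 1/\log k$, worse otherwise) and does not give it for, say, $\varepsilon=1$ and large $k$. The sentence ``balancing these should give'' is exactly where the proof is missing.

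The idea you need is to make the deleted piece small while keeping the surviving pieces large. Put $k$ equally spaced points $p_1,\dots,p_k$ (spacing $|I|/(3k)$) in the middle third of the current interval $I$, with windows of radius $\varepsilon|I|/(18k)$ around them. Any choice of one point per window is a $(1+\varepsilon)$-bi-Lipschitz copy of $[k]$, so some window misses $\Dcal$. Deleting it removes only a $\Theta(\varepsilon/k)$ fraction of $I$ and leaves two intervals of length roughly $|I|/3$ or more. That is a density gain $1+\Omega(\varepsilon/k)$ at constant length cost. You can either pass to the denser piece or, as the paper does, iterate on all $2^m$ pieces simultaneously for $\log_3(N/3k)$ rounds (a porosity argument). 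Either way this yields $\log N=O(k\varepsilon^{-1}\log(1/\delta))$.

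\textbf{Lower bound.} No construction is actually pinned down or verified. The digit heuristic claims that a copy with rescaling $r\approx b^s$ runs through about $k$ consecutive values of the $s$-th digit. But $r$ need not be aligned with $b^s$, and the gaps may vary by a factor $2$. For instance, suppose the forbidden digits are all even, $b$ is even and $2k<b$. Then the exact progression $x_0+2ib^s$ ($i<k$), whose $s$-th digit starts at $1$ and whose other digits are allowed, lies entirely in your set. The fallback $\{x: x \bmod q\in S\}$ contains the exact progression $s,s+q,s+2q,\dots$ outright.

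What is missing is an alignment-free rigidity lemma. Delete from $I$ a \emph{contiguous} middle interval of length $4|I|/k$. If a $2$-bi-Lipschitz image of $[k]$ met both remaining pieces, some consecutive pair would be more than $4|I|/k$ apart. Meanwhile the at least $k/2$ image points in one piece (of length at most $|I|/2$) contain two at distance about $2|I|/k$, forcing distortion $>2$. Recursing on both pieces for $\log_{3/2}N$ levels traps any such image in a singleton. The surviving density $(1-4/k)^{\log_{3/2}N}=N^{-\Theta(1/k)}$ then gives $N\ge e^{\Omega(k\log(1/\delta))}$. Without this lemma and count, the word ``routine'' is carrying the entire lower bound.
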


The proof of the upper bound is similar to the proof in \cite{dumitrescu2010approximate}.
We use a porosity argument.
We show that for any interval $I$, if $\Dcal \cap I$ has no $(1+\varepsilon)$-distorted copy of $[k]$ then $\Dcal \cap I$ is disjoint from an interval $I' \subset I$ of diameter $\asymp \varepsilon |I|/k$.
This is the discrete analogue of a \textbf{porous set} in geometric measure theory.
It is well-known that porous sets do not have full Hausdorff dimension (e.g. see Chapter 11 in the book \cite{mattila1999geometry}), and our argument mimics this fact. 
The lower bound follows from a Cantor set construction.
We construct $\Dcal \subset [N]$ by removing the middle $4N/k$-sized interval $I$ of $[N]$ at each iteration.
We argue that for any $(1+\varepsilon)$-bi-Lipschitz map $f:[k] \to \Dcal$, the image of $f$ must lie entirely in only one of the two subintervals of $[N]-I$, and continue recursively.

\subsection{Upper bound (via a porosity argument)}

\begin{proof}
Throughout the proof, we will fix $N \in \n$ which is sufficiently large, and $\Dcal \subset [N]$ which has density $\delta \in (0,1)$ (i.e. $|\Dcal| \geq \delta N$). We will assume that there does not exist a $(1+\varepsilon)$ bi-Lipschitz embedding $f:[k] \to \Dcal$, and our goal is to prove an upper bound on the size $|\Dcal|$.

\textbf{Step 1: A single induction step:}
\\Consider the $k$ equidistant points $p_1,\ldots,p_k$ in the middle thirds subinterval of $N$.
$$p_i := i \dfrac{N}{3k} + \dfrac{N}{3}\;\;\;\text{for all }i=1,...,k.$$
For simplicity of notation we will ignore any floor functions in this section.
Consider now the interval centered at $p_1,...,p_k$ of small diameter:
$$I_i := [p_i - \dfrac{\varepsilon}{18 k} N, p_i + \dfrac{\varepsilon}{18 k} N]\;\;\;\text{for all }i=1,...,k.$$
It is routine to check that any map $f: [k] \to [N]$ with $f(i) \in I_i$ for all $i=1,...,k$ is a $(1+\varepsilon)$ bi-Lipschitz embedding of the path space $[k]$.
By the hypothesis on $\Dcal$ this means that for some $i \in [k]$, $I_i \cap \Dcal = \emptyset$.
We can therefore write $[N]-I_i = I^0 \cup I^1$ where $I^0, I^1$ are disjoint intervals of diameter $>N/3$.
(Note that we removed an interval of density $\asymp \varepsilon/k$ and not an interval of density $\asymp 1/k$. This is important in the estimate of Step 4.)
This completes the base case in the induction.

\textbf{Step 2: Iterating the inductive step:} 
\\We now apply the same argument to $I^0 \cap \Dcal \subset I^0$ and $I^1 \cap \Dcal \subset I^1$.
We pick $k$ equidistant points on the middle thirds subinterval of $I^0$ and place intervals of radius $\dfrac{\varepsilon}{18 k} |I^0|$ at each point.
By the hypothesis, one of these intervals is disjoint from $\Dcal$.
This means that $\Dcal \cap I^0$ lies in the union of two disjoint subintervals $I^{00}$ and $I^{01}$ each of diameter $>N/9$.

\textbf{Step 3: Estimating the total number of iterations:}
\\We apply this argument $M:=\log_3(N/3k)$ times.
Observe that after $M$ iterations, every interval has at least $k$ points and therefore we have enough points to pick $p_1,...,p_k$ and the intervals $I_1,...,I_k$ in the next iteration.

\textbf{Step 4: Estimating the size of $\Dcal$:}
\\At the $m^{\text{th}}$-iteration for each $m=1,...,M$, we conclude that the set $\Dcal$ lies inside the union of $2^m$ intervals. Denote this union by $U_m$.
Because we remove a large subinterval from each interval in the $m^{\text{th}}$-iteration, we have $|U_{m+1}| \leq (1-\dfrac{\varepsilon}{18k})|U_m|$.
We conclude:
 $$|\Dcal| \leq |U_M| \leq \left(1-\dfrac{\varepsilon}{18k}\right)^M N = \left(1-\dfrac{\varepsilon}{18k}\right)^{-\log_3(3k)} \left(1-\dfrac{\varepsilon}{18k}\right)^{\log_3 N} N = \left(1-\dfrac{\varepsilon}{18k}\right)^{-\log_3(3k)} N^{1+\log_3(1-\varepsilon/18k)}.$$
Since for all $m \in \n$, we have $(1-1/m)^m \geq 1/4$, we have
$$\left(1-\dfrac{\varepsilon}{18k}\right)^{-\log_3(3k)} \leq 4^{(\varepsilon/18k) \log_3(3k)} = \exp\left[\left(\dfrac{\log 4}{18 \log 3}\right) \dfrac{\varepsilon \log k}{k}\right] \leq e^{\varepsilon \log k /10k}.$$
Next, by the concavity of the logarithm at $1$:
$$\log_3(1-\varepsilon/18k) < -\dfrac{1}{18 \ln 3} \dfrac{\varepsilon}{k} < -\dfrac{\varepsilon}{50 k} $$
Rearranging the terms in the inequality, we get:
$$\delta \leq |\Dcal|/N \leq e^{\varepsilon \log k / 10k} N^{-\varepsilon/50k} \iff N^{-\varepsilon/50k} \leq \dfrac{1}{\delta}\;e^{\varepsilon \log k / 10k}$$
$$ \iff \log N \leq \dfrac{50k}{\varepsilon}\left(\log(1/\delta) + \dfrac{\varepsilon \log k}{10k}\right) = 50 \dfrac{k \log(1/\delta)}{\varepsilon} + 10 \log k < 100 \dfrac{k \log(1/\delta)}{\varepsilon}.$$
\end{proof}

\subsection{Lower bound (via a Cantor set construction)}\label{subSec:ProofLowerPathCantor}

\begin{proof}\;

\textbf{Step 1: Construction of the Cantor set}.
\\Throughout the proof we will fix $N \in \n$ which is sufficiently large, and construct a sequence of finer and finer approximations $[N]=U_0 \supset U_1 \supset ... \supset U_M =: \Dcal$. For each $m=1,...,M$, $U_m$ will be the disjoint union of $2^m$ intervals.
\\At the first step, we divide $[N] = I^0 \cup I^{mid} \cup I^1$ where
$$I^0 := [1, N(1/2-2/k)],\;\;\;I^{mid}:= [N(1/2-2/k), N(1/2+2/k)]\;\;\;I^1:=[N(1/2+2/k),N],$$
and set $U_1:=I^0 \cup I^1$. In plain words, we remove the middle interval of length $(4/k)N$.
\\At the second step, we remove the middle interval of length $(4/k)|I^0|$ from $I^0$ to obtain $I^{00}, I^{01}$ and do the same for $I^{1}$ and set $U_2:= I^{00} \cup I^{01} \cup I^{10} \cup I^{11}$. At the $m^{\text{th}}$ step, we take each of the $2^{m}$ intervals of $U_m$ and remove in each the middle interval of density $4/k$ to obtain $U_{m+1}$.
Observe that each interval in $U_m$ has diameter $<(2/3)^m N$. We iterate this construction $M = \log_{3/2}(N)$ steps, which ensures that the $2^M$ intervals which comprise $U_M$ are all singletons.

\textbf{Step 2: Analysis to forbid an embedding at a single scale}
\\\textbf{Claim}: if $f:[k] \to I^0 \cup I^1$ has distortion $\leq 2$, then either $f([k]) \subset I^0$ or else $f([k]) \subset I^1$.
\\Suppose not. Then there exists $i \in [k]$ such that $f(i) \in I^0$ while $f(i+1) \in I^1$.
This implies that $|f(i)-f(i+1)| > 4N/k = 4N/k |i-(i+1)|$. On the other hand, either $I^0$ or $I^1$, say $I^0$ has $|f([k]) \cap I^0| \geq k/2$.
By pigeonholing (cover $I^0$ with $k/2$ balls of radius $N/k$ each), there exists $j \neq j' \in [k]$ such that $|f(j)-f(j')| \leq 2N/k \leq 2N/k|j-j'|$.
We obtain the distortion bound:
$$Distortion(f) \geq \dfrac{|f(i)-f(i+1)|/|i-(i+1)|}{|f(j)-f(j')|/|j-j'|} > \dfrac{4N/k}{2N/k} = 2$$
and the claim follows.
Similarly, we can apply this claim to any interval in the Cantor set construction.  For every $m=1,...,M$ and any $\alpha \in \{0,1\}^m$ if we have an embedding $f: [k] \to I^{\alpha 0} \cup I^{\alpha 1}$ with distortion $\leq 2$, then either $f([k]) \subset I^{\alpha 0}$ or else $f([k]) \subset I^{\alpha 1}$.

\textbf{Step 3: Recursive argument to forbid any embedding:}
\\Suppose that $f:[k] \to \Dcal$ has distortion $\leq 2$.
Since $\Dcal \subset I^0 \cup I^1$, by the above claim, either $f([k]) \subset I^0$ or else $f([k]) \subset I^1$. Denote by $I^{\alpha_1}$ this interval.
By the same reasoning, since $\Dcal \cap I^{\alpha_1} \subset I^{\alpha_1 0} \cup I^{\alpha_1 1}$, either $f([k]) \subset I^{\alpha_1 0}$ or else $f([k]) \subset I^{\alpha_1 1}$. Denote this interval by $I^{\alpha_1\alpha_2}$.
By iteratively applying the claim $M$-times, we conclude that $f([k]) \subset I^{\alpha}$ for some $\alpha \in \{0,1\}^M$.
However, by construction $I^{\alpha}$ has diameter $< N (2/3)^M = 1$ meaning that $I^{\alpha}$ is a singleton.
A $2$-bi-Lipschitz map must be a bijection so we arrive at a contradiction.

\textbf{Step 4: Estimating the size of $\Dcal$:}
\\We now prove a lower bound on the size of the Cantor set $\Dcal$.
We have: $N = |U_0|$ and $|U_{m+1}| = (1-4/k)|U_m|$ so we get:
 $$\delta N := |\Dcal| = (1-4/k)^M N = N^{1 + \ln(1-4/k)/\ln(3/2)} > N^{1-(4/\ln(3/2))(1/k)}$$
where at the last step we used the concavity of the logarithm.

Rearranging the terms, we get: $N \geq e^{C' k \log(1/\delta)}$ where $C' = \ln(3/2)/4$.
\end{proof}

\section{\textbf{Metric embeddings of binary trees into dense subsets of binary trees}}\label{sec:treesRamsey}

In this section we prove Theorem \ref{thm:treeRamsey}. We restate the theorem below with explicit constants.

\begin{thmB}[Metric embeddings of trees into dense subsets of trees]
\;\\For all $k\geq 3, 0 < \delta < 1, 0 < \varepsilon \leq 0.001$, we have:
    $$e^{0.1 k \log(1/\delta)} \leq Tree(1+\varepsilon,\delta,k) \leq e^{1000 k \varepsilon^{-1} \log(1/\delta)}.$$
\end{thmB}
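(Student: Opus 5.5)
The plan is to reduce both bounds to the path case (Theorem \ref{thm:pathSpaces}), using the structure of the tree measure $\mu$. That measure gives each level $\ell\in\{0,\dots,N-1\}$ total mass $1/N$, and on each level it is uniform. A root-to-leaf branch $b\in\{0,1\}^{N-1}$, chosen uniformly at random, passes through exactly one vertex per level. Hence
\[
\ave_b\, \frac{|\Dcal\cap b|}{N}=\mu(\Dcal)\geq\delta,
\]
so the set of levels $L_b$ occupied by $\Dcal$ along some branch has density $\geq\delta$ in $[N]$.

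\textbf{Upper bound.} The naive move, applying the path theorem to $L_b$, only embeds a path, not a tree. Instead I would run the porosity argument of the path upper bound directly on levels, combined with the tree-replica idea of Pach--Solymosi--Tardos.

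The key observation concerns depths. Suppose we find depths $h_1<\dots<h_k$ that are roughly equally spaced, say $h_{i+1}-h_i\in[s,(1+\varepsilon/10)s]$ with $h_1\geq s$. Suppose further that we find a "replica" of $Tree(k)$ whose level-$i$ vertices lie at depth $h_i$, with children chosen in distinct subtrees. Then tree distances are $d_T(u,v)=h(u)+h(v)-2h(u\wedge v)$, where the meeting point $u\wedge v$ sits at depth roughly $h_{j}$ up to an additive $s$-scale error. Rescaling by $s$, this should make the map $(1+O(\varepsilon))$-bi-Lipschitz.

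To guarantee the replica, I would use a density-increment/porosity dichotomy across scales, as in the path proof. Divide the current level range into $k$ windows of width $\asymp\varepsilon|I|/k$ in the middle third.
\begin{itemize}
\item If in every window the $\Dcal$-mass per level is non-negligible, say $\geq\delta/2$ on average, then a replica exists. It is built greedily top-down: at each chosen vertex, the two child subtrees each carry a proportional share of mass by averaging, and we pick a depth in the next window where both subtrees hit $\Dcal$. This is the Pach--Solymosi--Tardos replica lemma, applied with windows in place of exact levels.
\item Otherwise some window is nearly empty, removing a $\varepsilon/(Ck)$ fraction of levels (weighted), and we recurse on the two remaining level-intervals.
\end{itemize}
Iterating $\log_3 N$ times gives $\mu(\Dcal)\leq N^{-c\varepsilon/k}$, hence $N\leq e^{O(k\varepsilon^{-1}\log(1/\delta))}$.

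\textbf{Main obstacle.} The hard step is the first branch of the dichotomy. Each vertex must have two children in $\Dcal$ in separate subtrees, within the next window, while keeping density positive along the recursion. This is why I would phrase the dichotomy per subtree (relative measure) rather than globally. It also explains the constant $1000$ and the requirement $\varepsilon<0.001$: these absorb the losses in the depth errors and the halving of density at each step.

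\textbf{Lower bound.} I would take the Cantor set $C\subset[N]$ of levels from Subsection \ref{subSec:ProofLowerPathCantor}, which has density $\geq\delta$ once $N\leq e^{0.1k\log(1/\delta)}$, and set $\Dcal=\{w: l(w)\in C\}$, so that $\mu(\Dcal)=|C|/N$. Given a $(1+\varepsilon)$-embedding $f:Tree(k)\to\Dcal$, restrict it to a root-to-leaf path of $Tree(k)$, a geodesic with $k$ points. Its image is a near-geodesic in the tree, and composing with the depth function $w\mapsto l(w)$ gives a map $[k]\to C$. I expect this map to have distortion $\leq2$ up to a possible fold at one turning point; a fold is avoided by choosing the path from the root to a leaf whose image moves monotonically in depth, which exists by a counting argument on the $2^{k-1}$ leaves. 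This contradicts Step 2 of the Cantor argument.
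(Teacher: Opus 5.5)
The upper bound has a genuine gap, exactly at the step you flag as the main obstacle. The first branch of your dichotomy is false. Fix windows $W_1,\dots,W_k$ and let $\Dcal$ contain two kinds of vertices: those at levels in odd-indexed windows whose first letter is $0$, and those at levels in even-indexed windows whose first letter is $1$ (and anything you like at other levels). Every window has per-level density $1/2$, so no window is nearly empty. Yet no vertex of $\Dcal$ in $W_1$ has a descendant in $\Dcal$ in $W_2$. So there is no replica, not even a descending chain, with its $i$-th level in $W_i$.

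The same obstruction can be planted below any vertex, so switching to relative measure per subtree does not rescue the greedy step. Averaging only says that later windows are dense in subtrees on average. It does not give a $v\in\Dcal$ in the current window whose \emph{both} child subtrees are dense in \emph{all} later windows, and you need this hereditarily down the recursion. The porosity count would also have to be redone, since different subtrees would discard different windows. Producing such branching vertices is exactly what the Pach--Solymosi--Tardos replica theorem achieves. It is not a greedy averaging lemma, and it gives no control over which levels are used, so "PST with windows in place of exact levels" is not available.

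The missing idea is that the ``naive move'' you dismissed works once the single branch $b$ is replaced by a replica. The paper applies the PST theorem as a black box to get a replica $f\colon Tree(N')\to\Dcal$ whose set of image levels $L\subset[N]$ has density at least $h^{-1}(\delta)$. It then applies the path upper bound to $L$ itself. This yields levels $j_1<\dots<j_k$ of $Tree(N')$ whose image levels $L_1<\dots<L_k$ form a $(1+\varepsilon)$-bi-Lipschitz copy of $[k]$.

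Now pick a vertex at level $j_1$ of $Tree(N')$, and for each picked vertex at level $j_i$ pick one descendant at level $j_{i+1}$ in each of its two child subtrees. Since a replica maps meets to meets, composing with $f$ gives $F\colon Tree(k)\to\Dcal$ with $d_T(F(u),F(v))=L_{i(u)}+L_{i(v)}-2L_{i(u\wedge v)}$, where $i(u)\in[k]$ is the level of $u$ in $Tree(k)$ counted from $1$. This is $(1+\varepsilon)$-bi-Lipschitz because it involves only level differences. The only cost is replacing $\log(1/\delta)$ by $\log(1/h^{-1}(\delta))$.

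Your lower bound follows the paper's route in outline. The ``near-geodesic'' claim you expect is the paper's stability lemma, proved via medians of the tripods on $f(i),f(i+1),f(i+2)$. The fold is not handled by counting leaves: the paper keeps the $\geq k/2$ image points on one monotone half of the geodesic. To keep all $k$ points, you can instead apply the lemma to a leaf-to-leaf geodesic of $Tree(k)$ through the root and keep the root-to-leaf half that does not contain the turning point.
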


The upper bound uses as black boxes the upper bound for path spaces in Theorem \ref{thm:pathSpaces} and the Ramsey theorem for tree replicas of Pach--Solymosi--Tardos (namely Theorem 1 in \cite{pach2011remarks}), and the rest of the proof is analogous to the proof of the Furstenberg--Weiss theorem in \cite{pach2011remarks}.
The lower bound uses the same Cantor set construction as in the lower bound of Theorem \ref{thm:pathSpaces}, along with a characterization of path embeddings $[k] \to Tree(N)$ of small distortion. 

\subsection{Upper bound (via the tree replicas theorem of Pach--Solymosi--Tardos)}

\;\\We first recall some terminology about binary trees from \cite{pach2011remarks}. Let $N', N \in\n$ with $N' \leq N$.

A \textbf{tree replica} is a map $f: Tree(N') \to Tree(N)$ which satisfies two properties.
\\--First, if $w_1,w_2 \in Tree(N')$ which lie on the same level of the tree $Tree(N')$, i.e. $l(w_1) = l(w_2)$, then $f(w_1)$ and $f(w_2)$ lie in the same level of the tree $Tree(N)$, i.e. $l(f(w_1)) = l(f(w_2))$.
\\--Secondly, for all $w \in Tree(N')$ with immediate children $w_0$ and $w_1$, if we denote by $f(w)_0$ and $f(w)_1$ the immediate children of $f(w)$, then $f(w_0)$ is a descendant of $f(w)_0$ and $f(w_1)$ is a descendant of $f(w)_1$ (in vague words, the image of the tree "branches out" immediately after each point).

In \cite{pach2011remarks}, Pach--Solymosi--Tardos proved the following Ramsey theorem about tree replicas (Theorem 1).
They combined their result with Szemerédi's theorem to give quantitative bounds on the Furstenberg--Weiss theorem.
Denote by $h: [0,1/2] \to [0,1]$ the Shannon binary entropy function 
$$h(p) := p \log(1/p) + (1-p) \log(1/(1-p)).$$

\begin{thmB}[Pach--Solymosi--Tardos]\label{thm:Pach--Solymosi--Tardos}
    \;\\For all $N \in \n$ and $0<\delta<1$, any subset $\Dcal \subset Tree(N)$ with $\mu(\Dcal) \geq \delta$
    \\contains an arithmetic tree replica $f: Tree(N') \to \Dcal$ with $N' \geq \lfloor h^{-1}(\delta) N\rfloor$.
\end{thmB}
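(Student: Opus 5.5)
The plan is to run the Pach--Solymosi--Tardos counting argument: record, for every vertex, which \emph{level sets} can be realized by tree replicas sitting inside $\Dcal$ below that vertex, and show by a bottom-up induction that there are exponentially many of them. For $v\in Tree(N)$, let $\Fcal(v)$ be the family of sets $L\subset\{l(v),\dots,N-1\}$ for which there is a tree replica $g:Tree(|L|)\to\Dcal$ with the following properties: its image lies in the subtree rooted at $v$, and the set of levels it occupies is exactly $L$. We put $\emptyset\in\Fcal(v)$ by convention, as the empty replica, so $|\Fcal(v)|\ge 1$ always. At a leaf $v$ this gives $|\Fcal(v)|\ge 2$ if $v\in\Dcal$ and $|\Fcal(v)|\ge 1$ otherwise.

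\textbf{Recursion.} Let $v_0,v_1$ be the children of $v$. Every replica found under either child is also a replica under $v$, so
\[
\Fcal(v)\supseteq\Fcal(v_0)\cup\Fcal(v_1).
\]
Now suppose $v\in\Dcal$ and $L\in\Fcal(v_0)\cap\Fcal(v_1)$. Take replicas $g_0,g_1$ realizing $L$ under $v_0$ and $v_1$; they use the same levels. Sending the root to $v$, the left subtree via $g_0$ and the right subtree via $g_1$ gives a replica. It is level-preserving because $g_0$ and $g_1$ have the same level set, and it branches immediately at $v$. Hence
\[
\{L\cup\{l(v)\}: L\in\Fcal(v_0)\cap\Fcal(v_1)\}\subseteq\Fcal(v).
\]
These sets contain $l(v)$, while the sets in $\Fcal(v_0)\cup\Fcal(v_1)$ do not, so the two collections are disjoint. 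Using $|A\cup B|+|A\cap B|=|A|+|B|$ we get:
\begin{itemize}
\item if $v\in\Dcal$, then $|\Fcal(v)|\ge|\Fcal(v_0)|+|\Fcal(v_1)|\ge 2\sqrt{|\Fcal(v_0)||\Fcal(v_1)|}$;
\item if $v\notin\Dcal$, then $|\Fcal(v)|\ge\max\{|\Fcal(v_0)|,|\Fcal(v_1)|\}\ge\sqrt{|\Fcal(v_0)||\Fcal(v_1)|}$.
\end{itemize}
Taking logarithms,
\[
\log|\Fcal(v)|\ \ge\ \mathbf 1_{\Dcal}(v)+\tfrac12\bigl(\log|\Fcal(v_0)|+\log|\Fcal(v_1)|\bigr).
\]
Unrolling this from the leaves to the root $\varnothing$ gives
\[
\log|\Fcal(\varnothing)|\ \ge\ \sum_{w\in\Dcal}2^{-l(w)}\ =\ N\mu(\Dcal)\ \ge\ \delta N .
\]

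\textbf{Extraction.} $\Fcal(\varnothing)$ is a family of subsets of an $N$-element set with at least $2^{\delta N}$ members. Let $\beta=\lfloor h^{-1}(\delta)N\rfloor/N$, so $\beta\le h^{-1}(\delta)\le 1/2$ and $h(\beta)\le\delta$. The standard entropy bound gives
\[
\#\{L\subset[N]:|L|<\beta N\}\ \le\ \sum_{i<\beta N}\binom Ni\ \le\ 2^{h(\beta)N}\ \le\ 2^{\delta N}.
\]
This count must be compared carefully with $|\Fcal(\varnothing)|$; if the inequality is not strict, we use the slack coming from the strict inequality at the leaves in $\Dcal$ (the leaf bound $2$ beats the leaf bound $1$). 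Either way, $\Fcal(\varnothing)$ contains some $L$ with $|L|\ge\beta N$. This is exactly a replica $f:Tree(|L|)\to\Dcal$, and restricting it to its first $\lfloor h^{-1}(\delta)N\rfloor$ levels yields the required replica $Tree(N')\to\Dcal$ with $N'=\lfloor h^{-1}(\delta)N\rfloor$.

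\textbf{Main obstacle.} The hardest part is the precise bookkeeping, on two fronts. First, one must check that the gluing step really produces a replica in the paper's sense: that $f(w_0)$ lies below $f(w)_0$ at every later vertex, not only at the root. This follows because each $g_i$ is itself a replica. Second, one must get the boundary cases exactly right so that $N'=\lfloor h^{-1}(\delta)N\rfloor$ comes out, rather than a slightly smaller value. The qualifier ``arithmetic'' in the statement is the Pach--Solymosi--Tardos convention. If it is meant to force the level set $L$ to be an arithmetic progression, then one would, as they do, additionally apply Szemerédi's theorem inside $L$. That step is not needed for the linear bound on $N'$ as stated here.
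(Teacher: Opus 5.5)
The paper does not prove this statement. It is quoted from \cite{pach2011remarks}, and the proof environment that follows it in the source is really the proof of the upper bound in Theorem~\ref{thm:treeRamsey}, which uses the statement as a black box. Your argument reconstructs the level-set counting argument of Pach--Solymosi--Tardos, and it is correct. The glued map is a replica in the paper's orientation-preserving sense. The sets $L\cup\{l(v)\}$ are new and pairwise distinct, so $|\Fcal(v)|\ge|\Fcal(v_0)|+|\Fcal(v_1)|$ for $v\in\Dcal$. Unrolling the logarithmic recursion then gives $|\Fcal(\varnothing)|\ge 2^{N\mu(\Dcal)}\ge 2^{\delta N}$.

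Two points need tidying.

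\textbf{Strictness in the extraction step.} The ``slack at the leaves'' you invoke does not exist. At a leaf, $\log|\Fcal(v)|=\mathbf{1}_{\Dcal}(v)$ holds exactly, and the whole chain can be tight: if $\Dcal$ consists of all vertices on a set $P$ of levels, then $|\Fcal(\varnothing)|=2^{|P|}=2^{N\mu(\Dcal)}$. Strictness comes instead from integrality. Since $\beta N$ is an integer, $\sum_{i<\beta N}\binom{N}{i}\le\sum_{i\le\beta N}\binom{N}{i}-1\le 2^{\delta N}-1<|\Fcal(\varnothing)|$, which is the strict inequality you need.

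\textbf{The word ``arithmetic''.} It cannot be taken literally, and your remark about it is misleading. If the level set had to be an arithmetic progression, the replica would be a rescaled isometric copy of $Tree(N')$. The statement would then give $Tree(1,\delta,k)=O(k/h^{-1}(\delta))$, contradicting the exponential lower bound of Theorem~\ref{thm:treeRamsey}. There is also a direct counterexample: put $\Dcal$ on a random set of levels of density slightly above $\delta$; such a set contains no progression longer than $O(\log N)$. Applying Szemer\'edi's theorem inside $L$ would not rescue a linear $N'$, since it only yields progressions of very slowly growing length. The plain-replica statement you prove is exactly what the paper uses: it only needs the level set of density $h^{-1}(\delta)$, to which it then applies Theorem~\ref{thm:pathSpaces}.
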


\begin{proof}
We are given $\Dcal \subset Tree(N)$ with $\mu(\Dcal) \geq \delta$.
By Theorem \ref{thm:Pach--Solymosi--Tardos} we obtain a tree replica: $f: Tree(N') \to \Dcal$ where $N' := h^{-1}(\delta) N$.

Similar to \cite{pach2011remarks}, we now consider the set of possible levels of $f(Tree(N')) \subset \Dcal$.
Call this set $\Dcal_{path} \subset [N]$, and observe that it has density $|\Dcal_{path}|/N = h^{-1}(\delta)$.
By the upper bound of Theorem \ref{thm:pathSpaces}, we can find an $\varepsilon$-distorted copy of the path $[k]$ sitting inside $\Dcal_{path}$ so long as:
$$N \leq e^{100 k \varepsilon^{-1} \log(1/h^{-1}(\delta))} \leq e^{1000 k \varepsilon^{-1} \log(1/\delta)}.$$
In the last inequality, we used the quantitative bound (and took the logarithm):
$$h^{-1}(\delta) \geq \delta^{2}\;\;\;\text{for all }0\leq \delta \leq 1/2$$

We consider only the levels of $Tree(N')$ which map one of the $k$ levels of the path $[k]$ in $\Dcal_{path}$.
This gives us a map $f':Tree(k) \to \Dcal$ which is an $\varepsilon$-distorted embedding.
\end{proof}

\subsection{Lower bound (via stability of path metrics in trees)} 
\;

We first describe the $\delta$-dense subset of the tree which avoids tree copies.
The description is much easier than the analysis.
Let $\Dcal_{path} \subset [N]$ be the Cantor set which was constructed in subsection \ref{subSec:ProofLowerPathCantor}.
We take $\Dcal \subset Tree(N)$ to be the set of all points $w \in Tree(N)$ whose level $l(w)$ lies in this set $\Dcal_{path}$.
Clearly, $\mu(\Dcal) = \mu(\Dcal_{path})$.
The lower bound follows once we show that $Tree(k)$ cannot embed into $\Dcal$ with small distortion.
In fact, we will show something stronger: the path metric space $[k]$ cannot embed into $\Dcal$ with small distortion.

Recall that a \textbf{geodesic} in a graph is a shortest path between two points (which is always unique in a tree). A geodesic between $w,w' \in Tree(N)$ has a very special form: the geodesic starts at $w$, travels upward the tree until the least common ancestor $w\vee w'$ of $w$ and $w'$ is visited, and then moves downwards again to $w'$.
We need the following stability lemma:\\

\begin{lemB}[Every small distortion path in a tree is close to a geodesic]
    \;\\\textbf{If} $f: [k] \to Tree(N)$ is a $0.001$-distorted embedding with rescaling $r \in [N]$, 
    \\and $\gamma \subset Tree(N)$ is the geodesic from $f(1)$ to $f(k)$ in $Tree(N)$,
    \\\textbf{then} every point in the image of $f$ is close to $\gamma$:
    $$d_T(f(i),\gamma) := \min_{w \in \gamma} d_T(f(i),w) \leq 0.001 r\;\;\;\text{for all }i\in[k].$$
\end{lemB}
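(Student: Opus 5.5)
The plan is to use the four-point (Gromov product) structure of tree metrics. In a tree, the distance from a point $w$ to the geodesic $[a,b]$ equals the Gromov product
\[(a|b)_w = \tfrac12\bigl(d_T(w,a)+d_T(w,b)-d_T(a,b)\bigr).\]
This identity holds exactly in trees: the projection of $w$ onto $[a,b]$ is the median of $a,b,w$. So for each $i\in[k]$ it suffices to bound the Gromov product
\[\tfrac12\bigl(d_T(f(i),f(1))+d_T(f(i),f(k))-d_T(f(1),f(k))\bigr).\]

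For the upper bound on this quantity, write $\eta=0.001$, so that $r|j-j'|\le d_T(f(j),f(j'))\le(1+\eta)r|j-j'|$. Then
\[d_T(f(i),f(1))+d_T(f(i),f(k))\le (1+\eta)r\bigl((i-1)+(k-i)\bigr)=(1+\eta)r(k-1),\]
while $d_T(f(1),f(k))\ge r(k-1)$. Hence
\[d_T(f(i),\gamma)\le \tfrac12\,\eta\, r(k-1).\]

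The bound just obtained is linear in $k$, so the crude estimate loses a factor of $k$ against the claim $d_T(f(i),\gamma)\le \eta r$. This is the main obstacle. To remove the factor, I would localize using trees once more. Let $p_j$ be the projection of $f(j)$ onto $\gamma$. The Gromov-product argument applied to consecutive triples bounds how far each $f(j)$ sits off the geodesic between its neighbours. The plan is to show that the branches hanging off $\gamma$ at $f(i)$ cannot be long. Suppose $f(i)$ is at distance $h$ from $\gamma$. Then every path from $f(i-1)$ side to $f(i+1)$ side in the tree structure forces $d_T(f(i-1),f(i+1))\le d_T(f(i-1),f(i))+d_T(f(i),f(i+1))-2h'$. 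Here $h'$ is the depth of $f(i)$ off the geodesic $[f(i-1),f(i+1)]$. Combining the lower bound $2r$ with the upper bound $2(1+\eta)r$ gives $h'\le\eta r$.

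It then remains to pass from "close to the local geodesic $[f(i-1),f(i+1)]$" to "close to the global geodesic $\gamma$". I would argue that the projections $p_j$ onto $\gamma$ must progress monotonically along $\gamma$. If they backtracked, some distance $d_T(f(j),f(j'))$ with $|j-j'|$ large would be shortened by roughly twice the backtrack, and this would contradict the lower Lipschitz bound. Hence the branch of the tree containing $f(i)$ off $\gamma$ also contains the subpath in which $f(i)$'s neighbours sit near it. An excursion of depth $h$ off $\gamma$ must then be entered and exited by the path within a bounded number of steps. Otherwise the long detour inflates distances between indices on opposite sides beyond $(1+\eta)r|j-j'|$. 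Making this quantitative, the excursion depth is controlled by the local defect $\eta r$ at the turning point. I expect this bookkeeping (monotonicity of projections, then a local Gromov-product estimate at the point where the path turns around inside the branch) to be the delicate step. If the constant comes out as $C\eta r$ rather than $\eta r$, I would absorb it by using the distortion hypothesis $\varepsilon<0.001$ with a slightly smaller internal constant.
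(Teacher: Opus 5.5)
\textbf{There is a genuine gap.} Your local estimate is correct and is exactly the paper's Step 2: the Gromov product of $f(i)$ with respect to $f(i-1),f(i+1)$ is at most $\tfrac12\bigl(2(1+\eta)r-2r\bigr)=\eta r$.

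The gap is the passage from the local geodesics $[f(i-1),f(i+1)]$ to the global geodesic $\gamma$, which you only sketch. The claimed monotonicity of the projections $p_j$ onto $\gamma$ is not proved, and as you state it, it is circular. When $p_j\neq p_{j'}$ one has $d_T(f(j),f(j'))=h_j+d_T(p_j,p_{j'})+h_{j'}$ with $h_j=d_T(f(j),\gamma)$. So a backtrack of the projections can be compensated by the heights $h_j$, which are precisely the unknowns you are trying to bound; a priori you only know $h_j\le\tfrac12\eta r(k-1)$. The step ``an excursion must be entered and exited within a bounded number of steps'' is likewise never quantified. Your fallback of accepting $C\eta r$ and shrinking the distortion hypothesis would prove a different statement, since the lemma has the same constant $0.001$ in hypothesis and conclusion.

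The missing idea is to stop projecting onto $\gamma$ and instead show that the points produced by your local estimate themselves trace out $\gamma$.
\begin{itemize}
\item Let $g(j)$ be the median of $f(j-1),f(j),f(j+1)$ for $2\le j\le k-1$, and set $g(1)=f(1)$, $g(k)=f(k)$. Your local bound says exactly $d_T(f(j),g(j))\le\eta r$.
\item Both $g(j)$ and $g(j+1)$ lie on $[f(j),f(j+1)]$. This segment has length $\ge r>2\eta r$, with $g(j)$ within $\eta r$ of $f(j)$ and $g(j+1)$ within $\eta r$ of $f(j+1)$. Hence $f(j),g(j),g(j+1),f(j+1)$ occur in this order along it, and $g(j)\neq g(j+1)$.
\item Consequently, at $g(j+1)$ the path $[g(j),g(j+1)]\cup[g(j+1),g(j+2)]$ arrives from the direction of $f(j)$ and leaves in the direction of $f(j+2)$.
\item These two directions are distinct. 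Indeed $g(j+1)$ is the median of $f(j),f(j+1),f(j+2)$, so it lies on $[f(j),f(j+2)]$. It is not an endpoint, because $d_T(g(j+1),f(j))\ge r-\eta r>0$ and likewise for $f(j+2)$.
\item So the concatenation $[g(1),g(2)]\cup\cdots\cup[g(k-1),g(k)]$ never backtracks. In a tree, a path without backtracking is the unique geodesic between its endpoints, so this path is $\gamma$.
\item Therefore $g(i)\in\gamma$, and $d_T(f(i),\gamma)\le d_T(f(i),g(i))\le\eta r$, with the exact constant and no loss in $k$.
\end{itemize}
This gluing of consecutive tripods is the paper's Step 3, and it replaces your monotonicity and excursion bookkeeping entirely.
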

\begin{proof}
    \textbf{Step 1: notation for tripods in trees}: 
    \\Given any triple of points $w,w',w'' \in Tree(N)$ we can form a \textbf{tripod} which is the union of all the geodesics $w \to w'$, $w' \to w''$ and $w \to w''$.
    Because of the tree structure, this tripod consists of a \textbf{median point} $m$ such that each of the $3$ geodesics must pass through $m$ (e.g. the geodesic $w \to w'$ is the geodesic $w \to m$ concatenated by the geodesic $m \to w'$).
    \\\textbf{Step 2: tripods of $3$ consecutive points must be degenerate}:
    \\For each $i \in [k-2]$ consider the points $f(i),f(i+1),f(i+2)$ and form their tripod. Denote the median by $g(i+1)$.
    We claim that because $f(i), f(i+1), f(i+2)$ form a slightly distorted copy of the path of length $3$, this forces $f(i+1)$ and $g(i+1)$ to be very close to each other. In particular, we have the estimates:
    $$d_T(f(i),f(i+2)) = d_T(f(i),g(i+1)) + d_T(g(i+1),f(i+2)) \geq 2r$$
    $$d_T(f(i),f(i+1)) = d_T(f(i),g(i+1)) + d_T(g(i+1),f(i+1)) \leq (1.001) r$$
    $$d_T(f(i+1),f(i+2)) = d_T(f(i+1),g(i+1)) + d_T(g(i+1),f(i+2)) \leq (1.001) r$$
    adding the last two inequalities and subtracting the first, we get:
    \begin{equation}\label{eq:proximity-to-tree-geodesic}
        2 d_T(f(i+1),g(i+1)) \leq 0.002 r \implies d_T(f(i+1),g(i+1)) \leq 0.001 r.
    \end{equation}
    \textbf{Step 3: gluing all the tripods together}:
    \\Set $g(1):=f(1)$ and $g(k):=f(k)$.
    \begin{claimB}
        For all $i \in [k-2]$, the geodesic from $g(i)$ to $g(i+2)$ must pass through the point $g(i+1)$
    \end{claimB}
    \begin{proof}
    This is a tedious routine check. Recall that by definition, both $g(i)$ and $g(i+1)$ belong to the unique geodesic from $f(i) \to f(i+1)$ and both $g(i+1)$ and $g(i+2)$ belong to the unique geodesic from $f(i+1) \to f(i+2)$.
    Consider the geodesics from $g(i) \to f(i+1)$ and from $g(i+2) \to f(i+1)$, and let $m$ be the first common point (i.e. $m$ is the median of the tripod formed by $\{g(i),g(i+2),f(i+1)\}$).
    The path joining the geodesics from $f(i) \to g(i) \to m \to g(i+2) \to f(i+2)$ is also a geodesic and since $g(i+1)$ is the median of the tripod formed by $\{f(i), f(i+1), f(i+2)\}$ it must be the case that $m=g(i+1)$.
    This shows the claim.
    \end{proof}
    Applying the above claim we conclude that the path $\gamma$ from $f(1) = g(1) \to g(2) \to ... \to g(k) = f(k)$ is a geodesic from $f(1)$ to $f(k)$. (If not, then there exists an edge in the tree which is crossed for two consecutive times. Because the path is a union of geodesics, this event can only happen when the edge is crossed at the end of a geodesic from $g(i) \to g(i+1)$ and then the same edge is crossed again at the beginning of the geodesic from $g(i+1) \to g(i+2)$. This contradicts the lemma, because in that event the geodesic from $g(i) \to g(i+2)$ does not pass through $g(i+1)$.)
    Together with the bound \ref{eq:proximity-to-tree-geodesic} we get the desired conclusion.
\end{proof}

We are now in a position to prove the lower bound in Theorem \ref{thm:treeRamsey}.

\begin{proof}
Suppose $f: [k] \to Tree(N)$ is an embedding of distortion $<0.001$.
Then by the above lemma, we take the geodesic from $f(1)$ to $f(k)$ and conclude that each point in the image, say $f(i)$ is $0.01r$-close to a point on the geodesic $g(i)$.
By the remark made earlier, half of the geodesic is level-decreasing and the other half is level increasing.
One of the two pieces must have at least $k/2$ of the points $g(1),...,g(k)$.
Without loss of generality say it is the first half of the path, so $g(1), g(2),...,g(k/2)$ is level decreasing.
It is a little tedious but not hard to check that the set of levels $\{l(f(1)),...,l(f(k/2))\} \subset[N]$ forms a $2$-distorted path of length $k/2$ in $[N]$.
By the lower bound of Theorem \ref{thm:pathSpaces} this forces $N$ to be large:
$$N \geq Path(2,\delta,k/2) = e^{\Omega(k \log(1/\delta))}.$$
\end{proof}

\section{\textbf{Large subsets of the Hamming cube have trivial BMW type}}\label{sec:BMWTypeApplication}

In order to prove Theorem \ref{thm:applicationToBMWType} we will use the following variation of Theorem \ref{thm:MAIN}.

\begin{thmB}\label{thm:variation_of_MAIN}
    \;\\There exist universal constants $c,c'>0$ such that for all $0<\gamma <1$ and $N \in \n$ with $N \geq c \gamma^{-3/2}$,
    \\\textbf{If} $\Dcal \subset Q_N$ has size
    $|\Dcal| > 2^{(1-\gamma)N}$,
    \\\textbf{then} $\Dcal$ contains a $2$-distorted copy of the Hamming cube of dimension $\geq c'/\sqrt{\gamma}$.
\end{thmB}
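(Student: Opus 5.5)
This is a direct application of Theorem \ref{thm:epsilon_distortion} with a constant distortion parameter, say $\varepsilon = 1/5$ (any fixed value in $(0,1/4)$ will do). The density is $\delta := 2^{-\gamma N}$, so $\mu(\Dcal) > \delta$ and $\log(1/\delta) = \gamma N$. Since $1+\varepsilon < 2$, a $(1+\varepsilon)$-bi-Lipschitz copy is in particular a $2$-distorted copy.

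The plan is to take $k$ as large as the theorem permits. Theorem \ref{thm:epsilon_distortion} applies whenever
\[
N \geq C\varepsilon^{-2}k^3\log(1/\delta) = 25C\,k^3\gamma N .
\]
The factor $N$ cancels, and the condition becomes $k^3 \leq 1/(25C\gamma)$. This gives $k \asymp \gamma^{-1/3}$, which is weaker than the claimed $\gamma^{-1/2}$. So a naive invocation loses, and the real task is to locate the wasted factor of $k$ in the proof of Theorem \ref{thm:epsilon_distortion}.

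The $k^3$ comes from the perturbation error $\varepsilon_2 = \varepsilon/(18k)$, which is required because the neighbourhood error $\varepsilon_2 kn$ is compared with the block distance $n/2$. My plan is to re-run the proof with $\delta = 2^{-\gamma N}$ and to take $\varepsilon_2$ as small as the density allows, rather than tied to $\varepsilon/k$.

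For Lemma \ref{lem:measure_of_epsilon_neighborhood_of_set_is_huge} to give $\mu(\Dcal_{\varepsilon_2}) \geq 1 - \tailsize{\varepsilon_2}{N}$, we need $\tailsize{\varepsilon_2}{N} \leq \delta = 2^{-\gamma N}$. This holds once $\varepsilon_2^2/2 \gtrsim \gamma$, i.e. $\varepsilon_2 \asymp \sqrt{\gamma}$, and the requirement no longer depends on $k$. Proposition \ref{prop:subspace_in_Dε_then_subspace_in_D_two_eps} then needs $9(\varepsilon_1 + \varepsilon_2 k) \leq 1/4$. Taking $\varepsilon_1 = 1/100$ and $k = \lfloor c'/\sqrt{\gamma}\rfloor$ satisfies this and yields distortion at most $1+1/4 < 2$.

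It remains to check the hypotheses of Proposition \ref{prop:block_k_subspaces} for the set $\Dcal_{\varepsilon_2}$ with $n = \lfloor N/k\rfloor$. The condition $3k < \varepsilon_1^2 n$ requires $N \gtrsim k^2 \asymp \gamma^{-1}$. The density condition is
\[
1 - e^{-\varepsilon_2^2 N/2} > \bigl(2^k e^{-\varepsilon_1^2 n/2}\bigr)^{2^{-(k-1)}} .
\]
By Hoeffding the left side is at least $1 - 2^{-\gamma N}$. On the right side, $1$ minus the quantity is about $2^{-(k-1)}(\varepsilon_1^2 n/2 - k)$. So it is enough that
\[
2^{-\gamma N} \lesssim 2^{-k}\,\frac{\varepsilon_1^2 N}{k},
\]
which is where the hypothesis $N \geq c\gamma^{-3/2}$ should enter.

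The main obstacle is this last inequality: the tension between the $2^{-k}$ loss from the iterated squaring in Proposition \ref{prop:block_k_subspaces} and the neighbourhood density $1 - 2^{-\gamma N}$. We need $\gamma N \gtrsim k \asymp \gamma^{-1/2}$, i.e. $N \gtrsim \gamma^{-3/2}$, which is exactly the assumed threshold. So I expect it to close after choosing $c$ large relative to $c'$, with floors and small-$\gamma$ edge cases handled by shrinking constants.
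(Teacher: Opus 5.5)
Your argument is correct, and it uses the same three ingredients as the paper. You inflate $\Dcal$ by a radius of order $\sqrt{\gamma}\asymp 1/k$, find a roughly equitable block copy in the inflated set via Proposition~\ref{prop:block_k_subspaces}, and pull it back into $\Dcal$ with the perturbation proposition of Section~\ref{sec:UpperBoundEpsilonDistorted}. What differs is how the density is boosted.

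\textbf{The paper's route.} It boosts density in two stages. First, Harper's inequality (Lemma~\ref{lemm:HarperIsoperimetry}) with $\varepsilon_3=\sqrt{\ln(2)\gamma/2}$ raises $\Dcal_{\varepsilon_3}$ only to density $1/2$. This makes $\log(1/\delta)$ a constant, so Theorem~\ref{thm:epsilon_distortion} can be run on $\Dcal_{\varepsilon_3}$ almost as a black box, and its requirement $N\gtrsim k^3\log(1/\delta)$ becomes exactly $N\gtrsim k^3\asymp\gamma^{-3/2}$. The block copy then lies in $\Dcal_{\varepsilon_2+\varepsilon_3}$, and the two perturbation errors, each $\lesssim 1/k$, are added.

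\textbf{Your route.} You inflate once, by $\varepsilon_2=\sqrt{2\ln(2)\gamma}$, directly from $2^{-\gamma N}$ to $1-2^{-\gamma N}$ via Lemma~\ref{lem:measure_of_epsilon_neighborhood_of_set_is_huge}. You then check the threshold $\delta_k$ of Proposition~\ref{prop:block_k_subspaces} by hand, so the hypothesis $N\gtrsim\gamma^{-3/2}$ enters as $\gamma N\gtrsim k$.

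\textbf{Comparison.} These are the same mechanism seen from two sides: at radius $\Theta(1/k)$ one needs $N/k^2\gtrsim k$ to reach density $1-e^{-\Omega(k)}$. Both routes also remove the waste you diagnosed. The product $k^3\log(1/\delta)$ in Theorem~\ref{thm:epsilon_distortion} comes from two separate demands on $\varepsilon_2^2N$, namely beating $\log(1/\delta)$ and beating $k$, and these only need to be added. The paper's version is shorter given Theorem~\ref{thm:epsilon_distortion}. However, it actually uses that proof's intermediate block-copy conclusion rather than the theorem as stated, and its choice $\varepsilon=1/2$ lies outside that theorem's stated range $\varepsilon<1/4$. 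Your version is self-contained and makes the source of the exponent $3/2$ transparent.

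\textbf{Two small tidy-ups.}
\begin{enumerate}
\item The degenerate regime is $\gamma$ close to $1$, not small $\gamma$. There $c'/\sqrt{\gamma}$ may be at most $1$ and $\varepsilon_2$ may exceed $1$. In that case you only need two distinct points of $\Dcal$, and $|\Dcal|\geq 2$.
\item When $k\nmid N$, pass to a fibre $Q_{kn}$ of density at least $1-2^{-\gamma N}$ before invoking Proposition~\ref{prop:block_k_subspaces}. The perturbation radius then becomes $\varepsilon_2 N\le\varepsilon_2 k(n+1)$, which is absorbed into $c'$.
\end{enumerate}
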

\begin{proof}
    We will denote by $k\in \n$ the dimension of the Hamming cube we wish to embed.
    We will need another error parameter $0<\varepsilon_3<1$ (not to be confused with the error terms $\varepsilon_1$ and $\varepsilon_2$ in Theorem \ref{thm:epsilon_distortion}).
    
    \textbf{Step 1: Inflating the set $\Dcal$ by $\varepsilon_3$ to reach density $1/2$:}
    \\We will inflate the set $\Dcal$ by an error $\varepsilon_3 \asymp 1/k$ to cover $1/2$ of the density and then we will apply Theorem \ref{thm:epsilon_distortion} to argue that $\Dcal$ is close to a roughly equitable block copy.
    \\Suppose that $|\Dcal|\geq |Ball(pN)|$ for some $0<p<1$. 
    \\By Harper's inequality (Lemma \ref{lemm:HarperIsoperimetry}), if $pN + \varepsilon_3 N> N/2$ then $\mu(\Dcal_{\varepsilon_3}) > 1/2$, which is what we desire.
    \\We need to estimate, for all $0<\gamma<1$, the largest value of $p$ such that 
    $$|\Dcal| \geq 2^{(1-\gamma)N} > |Ball(pN)|.$$
    We apply the Hoeffding's bounds (Lemma \ref{lem:Hoeffding_bounds_for_tails}) with $\varepsilon = \sqrt{2 \ln(2) \gamma}$ to get:
    \begin{equation}\label{eq:TailBoundForBMW}
    \left|Ball\left(\dfrac{1-\sqrt{2 \ln(2) \;\gamma}}{2} N\right)\right| \leq 2^N e^{-(2\ln(2)\gamma)(N/2)} = 2^{(1-\gamma)N}. 
    \end{equation}
    We take $\varepsilon_3 := \sqrt{\ln(2) \gamma/2}$. Harper's inequality and the bound (\ref{eq:TailBoundForBMW}) give us: $\mu(\Dcal_{\varepsilon_3}) \geq 1/2$.
    \\We choose the largest $k$ such that $\varepsilon_3 < 1/1000k$, i.e. $k = \lfloor \dfrac{\sqrt{2}}{1000 \sqrt{\ln 2}} \gamma^{-1/2}\rfloor + 1$.
    
    \textbf{Step 2: Finding a roughly equitable block copy of large rescaling:}
    \\By Theorem \ref{thm:epsilon_distortion} with $\varepsilon=1/2$ and $\delta = 1/2$ we get a roughly equitable block copy with block error $\varepsilon_1 = 1/100$ and rescaling $r = N/2k(1-\varepsilon_1 - 4 \varepsilon_2 k)$ inside the twice-inflated set $\Dcal_{\varepsilon_3 + \varepsilon_2}$, where $\varepsilon_2 = 1/1000k$.
    Written in the notation of section \ref{subSec:concentrationMeasureFacts}, we have:
    $\mathcal{V}_{\pm\varepsilon_1}^{k,n}\bigl(\mathcal{D}_{\varepsilon_2 + \varepsilon_3}\bigr)\neq\emptyset$.
    Applying Proposition \ref{prop:subspace_in_Dε_then_subspace_in_D_two_eps} 
    with perturbation error  $\varepsilon_2 + \varepsilon_3 \leq 1/(100k)$ and block error $\varepsilon_1 = 1/100$, conclude that $\Dcal$ contains a $10$-distorted copy of the Hamming cube $Q_k$ of dimension $k \gtrsim 1/\sqrt{\gamma}$ and the proof is complete.
\end{proof}

We state and prove the contrapositive of Theorem \ref{thm:applicationToBMWType} since it is more convenient for the logic of the proof.

\begin{thmB}[Rewritten - large subsets of the Hamming cube have trivial BMW type]\label{thm:OLDNOTATIONapplicationToBMWType}
    \;\\There are universal constants $c,c'>0$ such that the following holds.
    \\For any $\gamma > 0$ and $N \geq c \gamma^{-3/2}$ and any subset $\Dcal \subset \{0,1\}^{N}$ of size $|\Dcal| \geq 2^{(1-\gamma)N}$ we have:
    \begin{equation}
       c_M(\Dcal) \geq c' T_p(M)^{-1} \gamma^{-\frac{1}{2}(1-\frac{1}{p})} 
    \end{equation}
    for any metric space $M$ of BMW type $p>1$ constant $T_p(M)$.
\end{thmB}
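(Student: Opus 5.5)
Combine Theorem \ref{thm:variation_of_MAIN} with the standard fact that Hamming cubes have poor BMW type: the BMW type $p$ inequality, tested on a cube, forces distortion $\gtrsim T_p(M)^{-1} k^{1-1/p}$. With $k \asymp \gamma^{-1/2}$ this gives $k^{1-1/p}\asymp \gamma^{-\frac12(1-\frac1p)}$, which is exactly the claimed bound.

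\textbf{Step 1 (a large cube inside $\Dcal$).} Since $N \geq c\gamma^{-3/2}$ and $|\Dcal| \geq 2^{(1-\gamma)N}$, Theorem \ref{thm:variation_of_MAIN} gives an injective map $g: Q_k \to \Dcal$ with $k \geq c'/\sqrt{\gamma}$. After rescaling, $g$ satisfies
\[
r\|\alpha-\alpha'\|_1 \leq \|g(\alpha)-g(\alpha')\|_1 \leq Kr\|\alpha-\alpha'\|_1
\]
for a universal constant $K$. Its proof yields distortion $10$ rather than $2$, but any universal constant is enough here.

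\textbf{Step 2 (composition).} Let $f:\Dcal\to M$ have distortion $D=c_M(\Dcal)$; without loss of generality $\|x-y\|_1\le d(f(x),f(y))\le D\|x-y\|_1$. Then $F=f\circ g: Q_k\to M$ has distortion at most $KD$. In particular
\[
d(F(\alpha),F(\alpha'))\ge r\|\alpha-\alpha'\|_1
\quad\text{and}\quad
d(F(\alpha),F(\alpha'))\le KDr\|\alpha-\alpha'\|_1 .
\]

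\textbf{Step 3 (BMW type on the cube).} Recall the BMW type $p$ inequality. For $F:Q_k\to M$, the average over $\alpha$ and over the $2^{k-1}$ antipodal pairs (diagonals) of $d(F(\alpha),F(\alpha+\mathbf 1))^p$ is bounded by $T_p(M)^p\, k^{p-1}$ times the sum over directions $j\in[k]$ of the average of $d(F(\alpha),F(\alpha+e_j))^p$. If the paper's normalization differs (for instance the Enflo-style convention), I adjust accordingly, and the exponent $k^{p-1}$ is the one to confirm.

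Plug in the bounds from Step 2:
\[
(rk)^p \le T_p(M)^p\, k^{p-1}\cdot k\,(KDr)^p ,
\]
hence $k^{p}\le T_p(M)^p k^{p}K^pD^p$. That is circular as written, so the normalization must be the BMW form with $k^{p-1}$ replaced by the averaged version. In that form the right side is the average over edges times $k^{p}$, weighted by $k^{-(p-1)}\cdot k^{p-1}$. More precisely, the BMW type $p$ inequality reads
\[
\mathbb E\, d(F(\alpha),F(\alpha+\mathbf 1))^p \le T_p^p\, k^{p/ p}\cdot \sum_j \mathbb E\, d(F(\alpha),F(\alpha+e_j))^p
\]
up to the normalization that makes cubes have type $1$ exactly.

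The clean way to state it is this. $M$ has type $p$ iff the diagonal satisfies
\[
\mathrm{diag}\le T_p\, k^{1/p}\,(\text{average edge length in the } \ell_p \text{ sense}) .
\]
For the cube itself, the diagonal is $k$ while edges are $1$. So $rk \le T_p\, k^{1/p}\, KDr$, giving
\[
D\ge K^{-1}T_p(M)^{-1}k^{1-1/p}\gtrsim T_p(M)^{-1}\gamma^{-\frac12(1-\frac1p)} .
\]

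\textbf{Main obstacle.} The main obstacle is pinning down the exact normalization of BMW type so that the exponent of $k$ comes out as $k^{1/p}$. Everything else is a direct composition of Theorem \ref{thm:variation_of_MAIN} with the type inequality.
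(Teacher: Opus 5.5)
Your proposal is correct and matches the paper's proof: compose the constant-distortion copy of $Q_k$ with $k\gtrsim\gamma^{-1/2}$ from Theorem~\ref{thm:variation_of_MAIN} with Enflo's lower bound $k^{1-1/p}/T_p(M)$ for the distortion of any map $Q_k\to M$ (you are right that the proof of that theorem only yields distortion $10$, and any universal constant suffices). To settle the normalization, drop the version with $k^{p-1}$, since it is just the triangle inequality plus H\"older, holds in every metric space, and is therefore vacuous; use instead the BMW definition $\mathbb{E}\,d(F(\alpha),F(\alpha+\mathbf{1}))^2\le T_p(M)^2k^{2/p-1}\sum_{j=1}^k\mathbb{E}\,d(F(\alpha),F(\alpha+e_j))^2$, which with your Step 2 bounds gives $(rk)^2\le T_p(M)^2k^{2/p}(KDr)^2$, i.e.\ $D\ge k^{1-1/p}/(KT_p(M))$.
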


\begin{proof}
   By an argument of Enflo \cite{enflo1970nonexistence,enflo1978infinite} 
    for any metric space $M$ and any $k \in \n$, 
    every map $f: Q_k \to M$ has bi-Lipschitz distortion $\geq k^{1-1/p}/T_p(M)$.
    Therefore the theorem follows once we embed $Q_k \to \Dcal$ with distortion $< 2$ for some $k \gtrsim 1/\sqrt{\gamma}$.
    This is precisely the content of Theorem \ref{thm:variation_of_MAIN}.
\end{proof}

\section{\textbf{Union problems for metric type and universality and related remarks}}\label{sec:remarksProofs}

The \textbf{Hales--Jewett theorem} \cite{hales1963regularity} states that for every finite alphabet $\Acal$ and number of colors $r \in \n$ there exists $N \in \n$ such that every $r$-coloring of the set $\Acal^N$ contains a monochromatic combinatorial line.
A \textbf{combinatorial line} is given via $\{l(a): a\in \Acal\}$ where $l$ is a \textbf{variable word} of length $N$, i.e. $l \in (\Acal \cup \{v\})^N$ where $v$ is an extra letter which appears at least once in $l$, and $l(i) \in \Acal^N$ is the word obtained by substituting $i$ in place of the extra letter $v$.

The \textbf{density Hales--Jewett theorem} of Furstenberg--Katznelson \cite{FurstenbergKatznelson1991} states that for every finite alphabet $\Acal$ and $\delta>0$ there exists $N \in \n$ such that every subset $\Dcal \subset \Acal^N$ of density $\mu(\Dcal) := |\Dcal|/|\Acal|^N > \delta$ contains a combinatorial line.
The \textbf{multi-dimensional density Hales--Jewett theorem} (which follows from the density Hales--Jewett theorem) extracts from dense subsets of $\Acal^N$ a \textbf{combinatorial subspace} which is given by substituting a variable word in multiple variable letters $v_1,...,v_k$ (where each variable letter appears at least once).
Observe that combinatorial subspaces do not give metric embeddings $f: \Acal^k \to \Acal^N$ because one variable letter in $v_1,...,v_k$ might have a large number of frequencies in $l$ while another variable letter might have a small number of frequencies in $l$. See the monograph \cite{dodos2016ramsey} and the survey \cite{dodos2018density} for more on the density Hales--Jewett theorem, its variations, and its history.

\begin{remB}\label{usingDensityHalesJewett}
    From the density Hales--Jewett theorem, 
    \\it follows easily that $\Emb(1,\delta,k) < \infty$ for all $k \in \n$ and $0<\delta < 1$.
\end{remB}
\begin{proof}
Fix $k \in \n$ and $0<\delta < 1$ and $N$ which will be chosen later.
Let $\Dcal \subset Q_N$ be a subset of density $\mu(\Dcal) \geq \delta$.
We break a point $x \in Q_N$ into $N/k$-many blocks of size $k$, meaning that we view $\Dcal \subset Q_N = \Acal^{N/k}$ where our alphabet is $\Acal := Q_k$.
Clearly, $\Dcal$ is a subset of density $\delta$ and $|\Acal| = 2^k$.
Let $N'$ be the dimension from the density Hales--Jewett theorem and choose $N:=kN'$.
By the density Hales--Jewett theorem there exists a 1-variable word $l \in (\Acal \cup \{v\})^N$ such that $l(\alpha) \in \Dcal$ for all $\alpha \in \Acal = Q_k$.
Observe $f: \alpha \mapsto l(\alpha)$ gives us a rescaled isometric embedding into $\Dcal$ and the rescaling $r$ is equal to the frequency of the variable letter $v$ in the variable word $l$.
\end{proof}

The same proof works for any $l_p$-grid $([m]^n,||\cdot||_p)$ with fixed thickness $m \in \n$ and growing dimension $n \in \n$.
It also gives following lemma which we need to prove Propositions \ref{prop::union_of_subspaces_and_metric_type} and \ref{prop::union_of_subspaces_and_metric_cotype}.

\begin{lemB}[Consequence of the Hales--Jewett theorem]\label{application_of_Hales--Jewett}
    \;\\For each $k,m \in \n$ there exists $n\in \n$ such that for all $N>n$ the following is true:
    \\(1) For any $2$-coloring (i.e. partition into $2$ pieces) of the Hamming cube $(Q_N,||\cdot||_1)$ there exists an isometric embedding with arbitrary rescaling 
    $$f: (Q_k,||\cdot||_1) \to (Q_N,||\cdot||_1)$$
    such that the image $f(Q_k)$ is monochromatic.
    \\(2) For any $2$-coloring (i.e. partition into $2$ pieces) of the $l_{\infty}$-grid $([m]^N,||\cdot||_\infty)$ there exists an isometric embedding with arbitrary rescaling 
    $$f: ([m]^k,||\cdot||_\infty) \to ([m]^N,||\cdot||_\infty)$$
    such that the image $f([m]^k)$ is monochromatic.
\end{lemB}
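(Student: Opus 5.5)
The plan is to reproduce the argument of Remark~\ref{usingDensityHalesJewett}, using the coloring Hales--Jewett theorem (2 colors) in place of its density version. The key observation is that a combinatorial line over a suitable alphabet automatically gives a rescaled isometric copy of the small space.

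For part (1), take the alphabet $\Acal := Q_k$, so $|\Acal| = 2^k$. Let $n'$ be the Hales--Jewett number $HJ(|\Acal|, 2)$ for this alphabet and two colors, and set $n := k n'$.

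Given $N > n$, write $N = k n'' + s$ with $n'' \geq n'$ and $0 \leq s < k$. Freeze the last $s$ coordinates at $0$; this is harmless, since it only restricts the coloring to a subcube. Then group the first $kn''$ coordinates into $n''$ consecutive blocks of length $k$, which identifies that subcube with $\Acal^{n''}$. The $2$-coloring of $Q_N$ restricts to a $2$-coloring of $\Acal^{n''}$.

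By Hales--Jewett (a line in dimension $n'$ extends to $n''\geq n'$ by padding with constants), there is a variable word $l \in (\Acal \cup \{v\})^{n''}$ whose line $\{l(\alpha) : \alpha \in Q_k\}$ is monochromatic. Define $f(\alpha) := (l(\alpha), 0^s)$.

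To check that $f$ is a rescaled isometry, note that $l(\alpha)$ and $l(\alpha')$ agree on every constant block. On each of the $r \geq 1$ variable blocks they equal $\alpha$ and $\alpha'$ respectively. Hence
\[
\|f(\alpha) - f(\alpha')\|_1 = r\,\|\alpha - \alpha'\|_1,
\]
where $r$ is the number of occurrences of $v$. So $f$ is an isometric embedding with rescaling $r$.

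Part (2) is identical with the alphabet $\Acal := [m]^k$. Group the coordinates of $[m]^N$ into blocks of length $k$, and fill any leftover coordinates with the constant $1$. For two points of the line, each coordinate difference is either $0$ (constant blocks and the padding) or equals a coordinate difference of $\alpha - \alpha'$ (variable blocks). Therefore
\[
\|f(\alpha) - f(\alpha')\|_\infty = \|\alpha - \alpha'\|_\infty,
\]
an isometry with rescaling $1$, which in particular is allowed. Take $n$ to be the larger of the two thresholds, or treat each part with its own $n$ as the statement permits.

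The only mildly delicate points are the "for all $N > n$" clause and the case where $k$ does not divide $N$; both are handled by the padding described above. I do not expect any real obstacle: everything rests on the qualitative Hales--Jewett theorem \cite{hales1963regularity}.
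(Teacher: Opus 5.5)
Your proposal is correct and follows essentially the same approach as the paper. Both arguments group the coordinates into blocks so that the cube (or grid) becomes $\mathcal{A}^{n''}$ over the alphabet $Q_k$ (or $[m]^k$), apply the two-color Hales--Jewett theorem, and read off the rescaled isometry from the resulting monochromatic combinatorial line. Two of your details are accurate refinements of points the paper glosses over: the padding that handles $k\nmid N$ and the ``for all $N>n$'' clause, and the observation that in the $\ell_\infty$ case the rescaling is actually $1$, not the number of variable blocks.
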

\begin{proof}
    We show only part $(2)$. Part $(1)$ follows from an identical argument (or alternatively from Remark \ref{usingDensityHalesJewett}).
    We break a point $x \in [m]^N$ into $N/k$-many blocks of size $k$, meaning that we view $[m]^N = \Acal^{N/k}$ where our alphabet is $\Acal := [m]^k$.
    Clearly, the $2$-coloring of $[m]^N$ gives us a $2$-coloring of $\Acal^{N/k}$ and our alphabet size is $|\Acal| = m^k$.
    Let $N'$ be the dimension from the Hales--Jewett theorem and choose $N:=kN'$.
    By the Hales--Jewett theorem,
    there exists a monochromatic 1-variable word $l \in (\Acal \cup \{v\})^N$ such that $l(\alpha)$ has the same color for all $\alpha \in \Acal = Q_k$.
    Observe $f: [m]^k \to [m]^N: \alpha \mapsto l(\alpha)$ gives us a monochromatic rescaled isometric embedding and the rescaling $r$ is equal to the frequency of the variable letter $v$ in the variable word $l$.
\end{proof}

Next, we prove Proposition \ref{prop::union_of_subspaces_and_metric_type}, Proposition \ref{prop:decomposition_into_positive_and_negative}, and Proposition
\ref{prop::union_of_subspaces_and_metric_cotype}.
We repeat the statements for the convenience of the reader.

\begin{propB}[The union of metric subspaces of nontrivial BMW type has nontrivial BMW type]
    \textbf{If} $(M,d)$ is a metric space with $M = A \cup B$ for some $A,B \subset M$
    \textbf{then}
    $$p_{A}>1\;\;\text{and}\;\;p_{B}>1 \implies p_M > 1.$$
\end{propB}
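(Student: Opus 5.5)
We argue by contraposition, using the Hamming cube as a test space for BMW type. The input we need is a characterization of trivial BMW type: $p_M = 1$ if and only if the Hamming cubes $Q_k$ embed into $M$ with uniformly bounded distortion. More precisely, $p_M>1$ holds if and only if $c_M(Q_k)\to\infty$, and in fact if and only if $c_M(Q_k)$ grows at least like a power of $k$. This is the Bourgain--Milman--Wolfson theorem, the metric analogue of the Maurey--Pisier theorem; see the survey \cite{naor2012introduction} and \cite{bourgain1986type}. One direction is exactly Enflo's argument already used in the proof of Theorem \ref{thm:OLDNOTATIONapplicationToBMWType}: if $M$ has BMW type $p>1$, then every $f:Q_k\to M$ has distortion $\geq k^{1-1/p}/T_p(M)$. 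The other direction is the nontrivial part of BMW: if $p_M=1$, then for some fixed $D$ (indeed for every $D>1$) and every $k$ there is an embedding $Q_k\to M$ of distortion $\leq D$.

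So suppose $p_M=1$, fix $D$, and fix an arbitrary $k$. Let $n$ be as in Lemma \ref{application_of_Hales--Jewett}(1) for this $k$, and pick $N>n$. By BMW, choose $g:Q_N\to M$ with distortion $\leq D$. Since $g$ is injective and $M=A\cup B$, we can $2$-color $Q_N$ by putting $x$ in color $A$ if $g(x)\in A$, and in color $B$ otherwise. Lemma \ref{application_of_Hales--Jewett}(1) then gives a rescaled isometric embedding $f:Q_k\to Q_N$ whose image is monochromatic, say of color $A$. The composition $g\circ f:Q_k\to A$ has distortion $\leq D$, because rescaled isometries do not change distortion.

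For each $k$, one of $A$ or $B$ receives a $D$-distorted copy of $Q_k$. Hence one of them, say $A$, receives such a copy for infinitely many $k$. Since $Q_{k'}$ embeds isometrically into $Q_k$ for $k'\le k$, $A$ in fact receives one for every $k$. Therefore $\sup_k c_A(Q_k)\le D<\infty$. The Enflo direction would then force $k^{1-1/p}\le D\,T_p(A)$ for every $p>1$ and all $k$, which is impossible. So $p_A=1$, contradicting the hypothesis $p_A>1$.

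The main obstacle is relying on the full BMW dichotomy, specifically the step that trivial type yields uniformly bi-Lipschitz cubes inside $M$. Everything else is the Ramsey step (Lemma \ref{application_of_Hales--Jewett}) plus bookkeeping. I would cite this dichotomy rather than reprove it, and I would check that it applies to arbitrary metric spaces for BMW type. This is also where the footnote's caveat about Enflo type enters: the analogous characterization is unavailable there.
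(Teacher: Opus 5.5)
Your proposal is correct and follows the paper's proof essentially step for step. The shared steps are these:
\begin{itemize}
\item contraposition via the Bourgain--Milman--Wolfson characterization of $p_M=1$;
\item $2$-coloring a low-distortion copy of $Q_N$ in $M$ according to whether points land in $A$ or $B$;
\item applying Lemma \ref{application_of_Hales--Jewett}(1) to extract a monochromatic rescaled isometric copy of $Q_k$;
\item pigeonholing over infinitely many $k$;
\item invoking Enflo's bound $k^{1-1/p}/T_p(A)$ to force $p_A=1$.
\end{itemize}
You also spell out two small points the paper leaves implicit: composing with a rescaled isometry does not increase distortion, and infinitely many $k$ suffice because $Q_{k'}$ embeds isometrically into $Q_k$ for $k'\le k$.
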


\begin{proof}
    Suppose that $p_M = 1$. We will show that either $p_A=1$ or else $p_B=1$.
    
    By a theorem of Bourgain--Milman--Wolfson, namely Theorem 2.6 in \cite{bourgain1986type}, $p_M = 1$ if and only if for all $n \in \n$ and $\varepsilon>0$, the Hamming cube of dimension $n$ embeds into $M$ with distortion $<1+\varepsilon$.

    For each $k \in \n$, let $n$ be the value from Part (1) of Lemma \ref{application_of_Hales--Jewett}.
    Consider the $2$-distorted copy $f: Q_n \to M$, and label each point $\alpha \in Q_n$ by $A$ if $f(\alpha) \in A$ and by $B$ if $f(\alpha) \in B$.
    By Lemma \ref{application_of_Hales--Jewett}, we have a monochromatic undistorted copy of $Q_k \to Q_n$.
    Label $k$ by $A$ if this copy is monochromatic with $A$ and label $k$ by $B$ if this copy is monochromatic with $B$.

    Either the set $\{k \in \n: k \;\text{has label }A\}$ is infinite or else the set $\{k \in \n: k \;\text{has label }B\}$ is infinite.
    Without loss of generality, suppose that $\{k \in \n: k \;\text{has label }A\}$ is infinite.
    This implies that for every $k \in \n$, $A$ contains a $2$-distorted copy of the Hamming cube of dimension $k$.
    
    By an argument of Enflo \cite{enflo1970nonexistence,enflo1978infinite} 
    for any metric space $M'$ and any $k \in \n$, 
    every map $f: Q_k \to M'$ has bi-Lipschitz distortion $\geq k^{1-1/p}/T_p(M')$.
    Combining these two observations, we conclude that $T_p(A) = \infty$ for all $p>1$, and hence $p_A = 1$.
\end{proof}

\begin{propB}[Failure to decompose into non-positive and non-negative curvature]
    \;\\For any $D>1$, there exists a metric space $(M,d)$ (namely the Hamming cube $Q_N$ for some $N \in \n$) which cannot split as the union of subsets $M = A \cup B$ such that 
    \\--$A$ embeds into a metric space of non-negative Alexandrov curvature with distortion $\leq D$ and
    \\--$B$ embeds into a metric space of non-positive Alexandrov curvature with distortion $\leq D$.
\end{propB}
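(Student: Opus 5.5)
We argue by contradiction using a Ramsey statement for the Hamming cube, combined with two metric obstructions: one for non-negative curvature and one for non-positive curvature.

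\textbf{Obstructions.} By Ohta \cite{ohta2009markov}, every space in $\mathrm{POS}$ has Markov type $2$ with a universal constant. Every space in $\mathrm{CAT}(0)$ has BMW type $2$ with constant $T_2=1$, as recalled after Theorem \ref{thm:applicationToBMWType}.
\begin{itemize}
\item For $B$: Enflo's argument gives that any $Q_k$ embedded in a $\mathrm{CAT}(0)$ space has distortion $\geq k^{1/2}$. So if $B$ contains a $2$-distorted copy of $Q_k$, composing gives an embedding of $Q_k$ into a $\mathrm{CAT}(0)$ space with distortion $\leq 2D$. This forces $k\leq 4D^2$.
\item For $A$: Markov type $2$ (equivalently, the Bourgain--Milman--Wolfson/Enflo type bound for spaces of Markov type $2$) shows that $Q_k$ embeds into a $\mathrm{POS}$ space only with distortion $\gtrsim \sqrt{k}$. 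In particular, it has nontrivial BMW type $2$ with bounded constant, so the same Enflo bound applies with a universal constant.
\end{itemize}
Hence there is $k_0=k_0(D)$ such that for $k\ge k_0$, $Q_k$ embeds with distortion $\le 2D$ into neither a $\mathrm{POS}$ space nor a $\mathrm{CAT}(0)$ space. Here we use that distortion is submultiplicative under composition: an undistorted (rescaled isometric) copy contributes factor $1$.

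\textbf{Ramsey step.} Take $k=k_0(D)$ and let $n$ be given by Part (1) of Lemma \ref{application_of_Hales--Jewett}; set $N>n$ and $M=Q_N$. Suppose $Q_N=A\cup B$ with the stated embeddings. Color each point of $Q_N$ by $A$ if it lies in $A$, and by $B$ otherwise; this is a genuine $2$-coloring even if $A\cap B\neq\emptyset$. Lemma \ref{application_of_Hales--Jewett} yields a rescaled isometric copy $f(Q_k)$ that is monochromatic, say contained in $A$. Composing $f$ (distortion $1$) with the embedding of $A$ into a $\mathrm{POS}$ space of distortion $\le D$ gives an embedding of $Q_k$ with distortion $\le D$, contradicting the choice of $k$. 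The $B$ case is identical, using $\mathrm{CAT}(0)$ and $T_2=1$.

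\textbf{Main obstacle.} The delicate point is making sure the non-negative curvature side has a genuine Enflo-type distortion lower bound for cubes, i.e.\ that $\mathrm{POS}$ spaces have BMW type $p>1$ with a uniform constant. I would justify this by passing from Markov type $2$ (Ohta) to BMW type $2$, since Markov type $p$ implies BMW type $p$ by applying the Markov chain to the random walk on the cube. Alternatively, I would cite the cube distortion bound implicit in \cite{BartalLinialMendelNaor2005}, which gives $c_{\mathrm{POS}}(Q_k)\gtrsim\sqrt{k}$. With either uniform $\sqrt{k}$ lower bound in hand, the rest is the routine Ramsey argument above.
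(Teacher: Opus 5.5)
Your proposal is correct and follows the paper's proof. Both arguments use a uniform $\sqrt{k}$ lower bound on the distortion of $Q_k$ into $\mathrm{POS}$ spaces (via Ohta) and into $\mathrm{CAT}(0)$ spaces (via BMW type $2$ with constant $1$), and then apply Part (1) of Lemma~\ref{application_of_Hales--Jewett} to get a rescaled isometric copy of $Q_k$ lying entirely in $A$ or entirely in $B$.

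The differences are cosmetic. You handle overlapping $A,B$ through the coloring, whereas the paper assumes disjointness. You also derive the $\mathrm{POS}$ cube bound from Markov type $2$, which gives $c_{\mathrm{POS}}(Q_k)\gtrsim\sqrt{k}$ directly by the random-walk argument; the paper instead simply asserts BMW type $2$ for $\mathrm{POS}$ spaces.
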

\begin{proof}
    By a theorem of Ohta \cite{ohta2009markov}, a metric space of non-negative Alexandrov curvature has BMW type $2$ with constant $1+\sqrt{2}$.
    This implies that any map of the Hamming cube of dimension $k$ into a metric space of non-negative Alexandrov curvature has distortion $\geq \sqrt{k}/(1+\sqrt{2})$.
    
    On the other hand, any metric space of non-positive Alexandrov curvature has BMW type $2$ constant $=1$ (the proof essentially goes back to Enflo \cite{enflo1970nonexistence}, see e.g. \cite{ohta2009markov}).
    This implies that any map of the Hamming cube of dimension $k$ into a metric space of non-positive Alexandrov curvature has distortion $\geq \sqrt{k} > \sqrt{k}/(1+\sqrt{2})$.

    Find $k \in \n$ such that $\sqrt{k}/(1+\sqrt{2}) > D$.
    Apply part (A) of Lemma \ref{application_of_Hales--Jewett} to obtain the value $n \in \n$.

    Suppose that the Hamming cube of dimension $n$ can be decomposed as $Q_n = A \cup B$ where $A$ embeds into a metric space of non-negative Alexandrov curvature with distortion $\leq D$ and $B$ embeds into a metric space of non-positive Alexandrov curvature with distortion $\leq D$.
    Without loss of generality, the two sets $A$ and $B$ are disjoint.
    Label each point in $Q_n$ by $A$ or by $B$ accordingly.
    By Lemma \ref{application_of_Hales--Jewett}, there exists an isometric embedding $f: Q_k \to Q_n$ whose image is entirely contained inside $A$ or entirely contained inside $B$.
    By our choice of parameters $\sqrt{k}/(1+\sqrt{2}) > D$, we arrive at a contradiction.
\end{proof}

\begin{propB}[A universal metric space cannot be split into two non-universal subspaces]
    \;\\\textbf{If} $(M,d)$ is a metric space with $M = A \cup B$ for some $A,B \subset M$
    and $M$ is universal,
    \\\textbf{then} either $A$ is universal or else $B$ is universal.
\end{propB}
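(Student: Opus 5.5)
We argue by contraposition, mirroring the proof of Proposition \ref{prop::union_of_subspaces_and_metric_type}. The test spaces are the $l_\infty$-grids $([m]^k,\|\cdot\|_\infty)$, and the Ramsey input is Part (2) of Lemma \ref{application_of_Hales--Jewett}. The starting observation is that every finite metric space $X$ with $|X|=m$ embeds isometrically into $l_\infty^m$ via the Fréchet embedding $x\mapsto (d(x,y))_{y\in X}$. After a small perturbation and rescaling, the image lands inside a grid $([m']^{k},\|\cdot\|_\infty)$ with $k=|X|$, at distortion arbitrarily close to $1$. Concretely, round each coordinate to a multiple of $\eta\cdot\min_{x\neq x'} d(x,x')$ and take $m'$ of order $\mathrm{diam}(X)/(\eta \min d)$; the distortion is then $1+O(\eta)$. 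Consequently a metric space is universal if and only if every grid $([m]^k,\|\cdot\|_\infty)$ embeds into it with distortion $<1+\varepsilon$ for every $\varepsilon>0$. Indeed, these grids are finite metric spaces, and conversely they approximately contain every finite metric space.

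Now suppose $M=A\cup B$ is universal, and assume without loss of generality that $A$ and $B$ are disjoint. Fix $m,k\in\n$ and $\varepsilon>0$, and let $n=n(k,m)$ be given by Part (2) of Lemma \ref{application_of_Hales--Jewett}. Since $M$ is universal, there is a map $g:([m]^n,\|\cdot\|_\infty)\to M$ with distortion $<1+\varepsilon$. Color each $\alpha\in[m]^n$ by $A$ or $B$ according to where $g(\alpha)$ lies. The lemma then produces a rescaled isometric embedding $f:[m]^k\to[m]^n$ whose image is monochromatic. The composition $g\circ f$ embeds $[m]^k$ into $A$ or into $B$ with distortion $<1+\varepsilon$, since rescaling does not affect distortion.

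Attach to each triple $(m,k,j)$, with $\varepsilon=1/j$, the label $A$ or $B$ according to which piece received the copy. We then need a single piece that works for all parameters, which we get from monotonicity. The grid $[m]^k$ embeds isometrically into $[m']^{k'}$ whenever $m\le m'$ and $k\le k'$: pad the extra coordinates with the constant $1$. Also, an embedding with distortion $<1+1/j'$ has distortion $<1+1/j$ whenever $j\le j'$. So consider only the diagonal triples $(t,t,t)$ for $t\in\n$. Some label, say $A$, occurs for infinitely many $t$. Then for every $(m,k,j)$ there is a $t\ge\max(m,k,j)$ with label $A$, and $A$ contains a $(1+1/j)$-distorted copy of $[m]^k$. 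By the characterization in the first paragraph, $A$ is universal.

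The only step that needs care is that characterization: that universality is equivalent to almost-isometric containment of all $l_\infty$-grids. In particular, one has to check that the discretization of the Fréchet embedding keeps distortion close to $1$. This holds because all nonzero distances in $X$ are bounded below, so rounding errors of size $\eta\min d$ perturb each distance by a relative factor of at most $1\pm 2\eta$. Everything else is the same pigeonhole argument used for Proposition \ref{prop::union_of_subspaces_and_metric_type}.
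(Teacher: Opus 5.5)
Your proof is correct and follows the paper's argument: $l_\infty$-grids as test spaces, Part (2) of Lemma~\ref{application_of_Hales--Jewett} to find a monochromatic copy of $[m]^k$ inside an almost-isometric copy of $[m]^n$ in $M$, padding monotonicity of grids for the pigeonhole, and the discretized Fréchet embedding to pass from grids back to all finite metric spaces. Your diagonal pigeonhole over $(t,t,t)$ with $\varepsilon=1/t$ is slightly more careful than the paper's version, which fixes $\varepsilon$ at the outset and does not explicitly handle the possibility that the selected piece ($A$ or $B$) depends on $\varepsilon$.
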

\begin{proof}
    Fix $\varepsilon>0$ throughout the entire proof.
    
    Since $M$ is universal, for all $n,m \in \n$ the $l_\infty$-grid $([m]^n,||\cdot||_\infty)$ embeds into $M$ with distortion $<1+\varepsilon$.
    Call this embedding $f_{m,n}$.
    For each $m,n \in \n$ we have a $2$-coloring of $[m]^n$ based on whether a point $\alpha \in [m]^n$ has $f(\alpha) \in A$ or $f(\alpha) \in B$.

    Fix $m, k \in \n$ and apply Lemma \ref{application_of_Hales--Jewett} for a sufficiently high $n \in \n$ and the $2$-coloring of $[m]^n$ obtained from $f_{m,n}$.
    We conclude that for each $m, k \in \n$, the $l_\infty$-grid $([m]^k, ||\cdot||_\infty)$ embeds either to $A$ with distortion $\leq 1+\varepsilon$ or to $B$ with distortion $\leq 1+\varepsilon$.
    We label the pair $(m,k)\in \n \times \n$ by $A$ or by $B$ accordingly.

    Label each $m \in \n$ by $A$ if $\{(m,k) \in \n\times \n: (m,k)\;\text{has label }A \}$ is infinite and label $m \in \n$ by $B$ otherwise.
    Either $\{m \in \n: m \;\text{has label }A\}$ is infinite or else $\{m \in \n: m \;\text{has label }B\}$ is infinite.
    Say it is the former.
    Because for each $m'\leq m$ and $k' \leq k$ we have an undistorted embedding
    $([m']^{k'}, ||\cdot||_\infty) \to ([m]^k, ||\cdot||_\infty)$,
    we conclude for every $m, k \in \n$, $([m]^k, ||\cdot||_\infty)$ embeds into $A$ with distortion $\leq 1+\varepsilon$.

    By the Fréchet embedding theorem, for every finite metric space $M_0$ there exists $k,m$ such that $M_0$ embeds into $([m]^k, ||\cdot||_\infty)$ with distortion $\leq 1+\varepsilon$.
    Therefore $M_0$ embeds into $A$ with distortion $\leq (1+\varepsilon)^2$. 
    This shows that $A$ is universal.
\end{proof}

\section{\textbf{Questions and directions for further study}}\label{sec:conclusion}

\begin{conjecture}
    There exists $C>0$ such that $\Emb(1,\delta,k) \leq e^{C k} \log 1/\delta$ for all $k \in \n$ and $0<\delta<1$.
\end{conjecture}

\begin{conjecture}
    For each fixed $0<\delta<1$ and $\varepsilon > 0$,
    \\the power type behavior of $\Emb(1+\varepsilon,\delta,k)$ as $k \to \infty$ is linear, that is:
    $$\limsup_{k \to \infty} \dfrac{\log(\Emb(1+\varepsilon,\delta,k))}{\log k} = \liminf_{k \to \infty} \dfrac{\log(\Emb(1+\varepsilon,\delta,k))}{\log k} = 1$$
\end{conjecture}

\begin{question}
    Prove a coloring version of Theorem \ref{thm:MAIN} and of Theorem \ref{thm:treeRamsey}. How do the bounds change?
\end{question}

\begin{question}[Dependence on the error]
    What is the dependence on the bi-Lipschitz error $\varepsilon$ in the path spaces and tree Ramsey problems?
    Are the upper bounds in Theorems \ref{thm:pathSpaces} and \ref{thm:treeRamsey} sharp up to universal constants?
    In other words do we have
    $$Path(1+\varepsilon, \delta, k) = e^{\Omega( k \varepsilon^{-1} \log(1/\delta))}\;\;\;\text{and}\;\;\;Tree(1+\varepsilon, \delta, k) = e^{\Omega( k \varepsilon^{-1} \log(1/\delta))}$$
    for all $k\in \n$, $0<\delta<1$ and $0 < \varepsilon < 0.01$? What is the dependence when the distortion $C$ is very large?
    Is it true that for all fixed $C>1$, $Path(C,\delta,k)$ and $Tree(C,\delta,k)$ grow exponentially in $k$?
\end{question}

\begin{question}[Cubes of fixed higher width]
    Fix a \textbf{width parameter} $m\in \n$ and an \textbf{exponent} $1\leq p \leq \infty$.
    For each $k \in \n$ and $0<\delta<1$ and $\varepsilon>0$, let $N(1+\varepsilon,k,\delta,m,p)$ be the smallest $N$ such that for every subset $\Dcal \subset [m]^N$ of size $|\Dcal|\geq \delta m^N$ there exists a $(1+\varepsilon)$ bi-Lipschitz embedding $f: ([m]^k,||\cdot||_p) \to (\Dcal,||\cdot||_p)$.
    An application of density Hales--Jewett similar to Remark \ref{usingDensityHalesJewett} shows that $N(1,k,\delta,m,p) < \infty$.
    \\Does the growth of $N(1+\varepsilon,k,\delta,m,p)$ as $k \to \infty$ depend on the choice of width $m$ or exponent $p$?
\end{question}

\begin{question}[Grids of fixed higher dimension]
    Fix a \textbf{grid dimension} $d \in \n$.
    For each $k \in \n$ and $0<\delta<1$ and $\varepsilon>0$, let $GRID_d(1+\varepsilon,k,\delta)$ be the smallest $N$ such that for all $\Dcal \subset [N]^d$ with $|\Dcal| \geq \delta N^d$ there exists a $(1+\varepsilon)$ bi-Lipschitz map $f: ([k]^d,||\cdot||_1) \to (\Dcal,||\cdot||_1)$.
    A modification of the proof of Theorem \ref{thm:pathSpaces} (with a $4$-cornered Cantor set for the lower bound) seems to give:
    $$e^{\Omega_d(k \log(1/\delta))} \lesssim GRID_d(1+\varepsilon,k,\delta) \lesssim e^{O_d(k^d \varepsilon^{-d} \log(1/\delta))}.$$
    For fixed $0 < \varepsilon \lesssim1$ and $0< \delta < 1$, how does $GRID_d(1+\varepsilon,k,\delta)$ grow as $k \to \infty$?
\end{question}

\begin{question}[Vague Direction]\label{question:vague_direction}
    Find more examples of sequences of finite metric measure spaces $(X_N,d_N,\mu_N)_{N=1}^\infty$ with the following property:
    for every $\delta>0$ and $k \in \n$, there exists $N \in \n$ such that for every $\Dcal \subset X_N$ there exists a \textit{metric embedding} $f: X_k \to \Dcal$.
    Here \textit{metric embedding} can mean either a $(1+\varepsilon)$ bi-Lipschitz map, an undistorted map, or an undistorted map of bounded rescaling.
    Investigate the dependence of $N$ on the parameters $k$, $\delta$ and $\varepsilon$.
\end{question}

\printbibliography

@book{ostrovskii2013metric,
  title={Metric embeddings: Bilipschitz and coarse embeddings into Banach spaces},
  author={Ostrovskii, Mikhail I},
  volume={49},
  year={2013},
  publisher={Walter de Gruyter}
}

@article{hales1963regularity,
  title={Regularity and positional games},
  author={Hales, Alfred W and Jewett, Robert I},
  journal={Transactions of the American Mathematical Society},
  volume={106},
  number={2},
  pages={222--229},
  year={1963}
}

@article{OstrovskiiRandrianantoanina+2022+313+329,
url = {https://doi.org/10.1515/agms-2022-0145},
title = {On L1-Embeddability of Unions of L1-Embeddable Metric Spaces and of Twisted Unions of Hypercubes},
author = {Mikhail I. Ostrovskii and Beata Randrianantoanina},
pages = {313--329},
volume = {10},
number = {1},
journal = {Analysis and Geometry in Metric Spaces},
doi = {10.1515/agms-2022-0145},
year = {2022},
lastchecked = {2026-06-04}
}

@article{guentner2013discrete,
  title={Discrete groups with finite decomposition complexity},
  author={Guentner, Erik W and Tessera, Romain and Yu, Guoliang},
  journal={Groups, Geometry, and Dynamics},
  volume={7},
  number={2},
  pages={377--402},
  year={2013}
}

@article{bell2001asymptotic,
  title={On asymptotic dimension of groups},
  author={Bell, Gregory and Dranishnikov, Alexander N},
  journal={Algebraic \& Geometric Topology},
  volume={1},
  number={1},
  pages={57--71},
  year={2001},
  publisher={Mathematical Sciences Publishers}
}

@article{MakarychevMakarychev,
author = {Makarychev, Konstantin and Makarychev, Yury},
year = {2016},
month = {02},
pages = {},
title = {A Union of Euclidean Metric Spaces is Euclidean},
volume = {14},
journal = {Discrete Analysis},
doi = {10.19086/da.876}
}

@article{mendel2013ultrametric,
  title={Ultrametric skeletons},
  author={Mendel, Manor and Naor, Assaf},
  journal={Proceedings of the National Academy of Sciences},
  volume={110},
  number={48},
  pages={19256--19262},
  year={2013},
  publisher={National Academy of Sciences},
  doi={10.1073/pnas.1202500109},
  eprint={1112.3416},
  archivePrefix={arXiv}
}

@article{DadarlatGuentner+2007+1+15,
url = {https://doi.org/10.1515/CRELLE.2007.081},
title = {Uniform embeddability of relatively hyperbolic groups},
author = {Marius Dadarlat and Erik Guentner},
pages = {1--15},
volume = {2007},
number = {612},
journal = {Journal für die reine und angewandte Mathematik},
doi = {10.1515/CRELLE.2007.081},
year = {2007},
lastchecked = {2026-06-04}
}

@article {BFM86,
    AUTHOR = {Bourgain, J. and Figiel, T. and Milman, V.},
     TITLE = {On {H}ilbertian subsets of finite metric spaces},
   JOURNAL = {Israel J. Math.},
  FJOURNAL = {Israel Journal of Mathematics},
    VOLUME = {55},
      YEAR = {1986},
    NUMBER = {2},
     PAGES = {147--152},
      ISSN = {0021-2172},
   MRCLASS = {46C05 (46B20 54E35)},
  MRNUMBER = {868175},
MRREVIEWER = {M.\ I.\ Kadets},
       DOI = {10.1007/BF02801990},
       URL = {https://doi.org/10.1007/BF02801990},
}

@Misc{naor2022personal,
  author = {Naor, Assaf},
  month  = sep,
  title  = {Personal communication},
  year   = {2022},
}

@article{FurstenbergKatznelson1991,
  author  = {Furstenberg, Hillel and Katznelson, Yitzhak},
  title   = {A density version of the {H}ales--{J}ewett theorem},
  journal = {Journal d'Analyse Math\'ematique},
  year    = {1991},
  volume  = {57},
  number  = {1},
  pages   = {64--119},
  doi     = {10.1007/BF03041066}
}

@article{DK2022,
  author  = {Dodos, Pandelis and Karamanlis, Miltiadis},
  title   = {Forbidden sparse intersections},
  journal = {Forum of Mathematics, Sigma},
  volume  = {13},
  year    = {2025},
  pages   = {e99},
  doi     = {10.1017/fms.2025.10067}
}

@article{mendel2013ultrametricHausdorff,
  title     = {Ultrametric subsets with large {H}ausdorff dimension},
  author    = {Mendel, Manor and Naor, Assaf},
  journal   = {Inventiones Mathematicae},
  volume    = {192},
  number    = {1},
  pages     = {1--54},
  year      = {2013},
  publisher = {Springer},
  doi       = {10.1007/s00222-012-0402-7},
  eprint    = {1106.0879},
  archivePrefix = {arXiv},
}

@article{bartal2006ramsey,
  title     = {Ramsey-type theorems for metric spaces with applications to online problems},
  author    = {Bartal, Yair and Bollob{\'a}s, B{\'e}la and Mendel, Manor},
  journal   = {Journal of Computer and System Sciences},
  volume    = {72},
  number    = {5},
  pages     = {890--921},
  year      = {2006},
  publisher = {Elsevier},
  doi       = {10.1016/j.jcss.2005.05.008},
  eprint    = {cs/0406028},
  archivePrefix = {arXiv},
}

@book{bridson2013metric,
  title     = {Metric Spaces of Non-Positive Curvature},
  author    = {Bridson, Martin R. and Haefliger, Andr{\'e}},
  series    = {Grundlehren der mathematischen Wissenschaften},
  volume    = {319},
  year      = {1999},
  publisher = {Springer},
  address   = {Berlin},
  isbn      = {978-3-540-64324-1},
  doi       = {10.1007/978-3-540-64324-1},
}

@article{gromov2003random,
  title     = {Random walk in random groups},
  author    = {Gromov, Mikhail},
  journal   = {Geometric and Functional Analysis},
  volume    = {13},
  number    = {1},
  pages     = {73--146},
  year      = {2003},
  publisher = {Birkh{\"a}user-Verlag Basel},
  doi       = {10.1007/s000390300002},
}

@Article{gromov2001cat,
  author    = {Gromov, Mikhail Leonidovich},
  journal   = {Zapiski Nauchnykh Seminarov POMI},
  title     = {{CAT}($\kappa$)-spaces: construction and concentration},
  year      = {2001},
  number    = {0},
  pages     = {101--140},
  volume    = {280},
  publisher = {St.~Petersburg Department of Steklov Mathematical Institute, RAS},
  url       = {https://www.mathnet.ru/eng/znsl1463},
}

@article{HARPER1966385,
  title   = {Optimal numberings and isoperimetric problems on graphs},
  journal = {Journal of Combinatorial Theory},
  volume  = {1},
  number  = {3},
  pages   = {385--393},
  year    = {1966},
  issn    = {0021-9800},
  doi     = {10.1016/S0021-9800(66)80059-5},
  url     = {https://www.sciencedirect.com/science/article/pii/S0021980066800595},
  author  = {L.H. Harper},
}

@article{enflo1970nonexistence,
  title     = {On the nonexistence of uniform homeomorphisms between {$L^p$}-spaces},
  author    = {Enflo, Per},
  journal   = {Arkiv f{\"o}r Matematik},
  volume    = {8},
  number    = {2},
  pages     = {103--105},
  year      = {1970},
  publisher = {Springer},
  doi       = {10.1007/BF02589549},
}

@article{ohta2009markov,
  title         = {Markov type of {A}lexandrov spaces of non-negative curvature},
  author        = {Ohta, Shin-Ichi},
  journal       = {Mathematika},
  volume        = {55},
  number        = {1-2},
  pages         = {177--189},
  year          = {2009},
  publisher     = {London Mathematical Society},
  doi           = {10.1112/S0025579300001005},
  eprint        = {0707.0102},
  archivePrefix = {arXiv},
}

@article{kondo2012cat,
  title         = {{CAT}(0) spaces and expanders},
  author        = {Kondo, Takefumi},
  journal       = {Mathematische Zeitschrift},
  volume        = {271},
  number        = {1},
  pages         = {343--355},
  year          = {2012},
  publisher     = {Springer},
  doi           = {10.1007/s00209-011-0866-y},
}

@article{ball1992markov,
  title     = {Markov chains, {R}iesz transforms and {L}ipschitz maps},
  author    = {Ball, Keith},
  journal   = {Geometric \& Functional Analysis},
  volume    = {2},
  number    = {2},
  pages     = {137--172},
  year      = {1992},
  publisher = {Springer},
  doi       = {10.1007/BF02099193},
}

@article{rodl2022blurred,
  title         = {A blurred view of {V}an der {W}aerden type theorems},
  author        = {R{\"o}dl, Vojtech and Sales, Marcelo},
  journal       = {Combinatorics, Probability and Computing},
  volume        = {31},
  number        = {4},
  pages         = {684--701},
  year          = {2022},
  publisher     = {Cambridge University Press},
  doi           = {10.1017/S0963548321000237},
}

@article{fraser2021approximate,
  title         = {Approximate arithmetic structure in large sets of integers},
  author        = {Fraser, Jonathan M. and Yu, Han},
  journal       = {Real Analysis Exchange},
  volume        = {46},
  number        = {1},
  pages         = {163--174},
  year          = {2021},
  doi           = {10.14321/realanalexch.46.1.0163},
  eprint        = {1905.05034},
  archivePrefix = {arXiv},
}

@article{fraser2018arithmetic,
  title         = {Arithmetic patches, weak tangents, and dimension},
  author        = {Fraser, Jonathan M. and Yu, Han},
  journal       = {Bulletin of the London Mathematical Society},
  volume        = {50},
  number        = {1},
  pages         = {85--95},
  year          = {2018},
  publisher     = {Wiley Online Library},
  doi           = {10.1112/blms.12112},
  eprint        = {1611.06960},
  archivePrefix = {arXiv},
}

@Article{dumitrescu2010approximate,
author       = {Adrian Dumitrescu},
  title        = {Approximate Euclidean Ramsey Theorems},
  journal      = {J. Comput. Geom.},
  volume       = {2},
  number       = {1},
  pages        = {16--29},
  year         = {2011},
  url          = {https://doi.org/10.20382/jocg.v2i1a2},
  doi          = {10.20382/JOCG.V2I1A2},
  bibsource    = {dblp computer science bibliography, https://dblp.org}
}

@book{mattila1999geometry,
  title     = {Geometry of Sets and Measures in {E}uclidean Spaces: Fractals and Rectifiability},
  author    = {Mattila, Pertti},
  series    = {Cambridge Studies in Advanced Mathematics},
  number    = {44},
  year      = {1995},
  publisher = {Cambridge University Press},
  address   = {Cambridge},
  isbn      = {978-0-521-65595-4},
  doi       = {10.1017/CBO9780511623813},
}

@book{dodos2016ramsey,
  title     = {Ramsey Theory for Product Spaces},
  author    = {Dodos, Pandelis and Kanellopoulos, Vassilis},
  series    = {Mathematical Surveys and Monographs},
  volume    = {212},
  year      = {2016},
  publisher = {American Mathematical Society},
  address   = {Providence, RI},
  isbn      = {978-1-4704-2808-8},
  doi       = {10.1090/surv/212},
}

@inproceedings{dodos2018density,
  title     = {Density {H}ales--{J}ewett numbers---where do we stand?},
  author    = {Dodos, Pandelis},
  booktitle = {Proceedings of the 16th Panhellenic Conference on Mathematical Analysis},
  pages     = {11},
  year      = {2018},
}

@book{alon2016probabilistic,
  title     = {The Probabilistic Method},
  author    = {Alon, Noga and Spencer, Joel H.},
  year      = {2016},
  edition   = {4},
  publisher = {John Wiley \& Sons},
  address   = {Hoboken, NJ},
  isbn      = {978-1-119-06195-3},
  doi       = {10.1002/9781119422532},
}

@article{enflo1978infinite,
  title   = {On infinite-dimensional topological groups},
  author  = {Enflo, Per},
  journal = {S{\'e}minaire Maurey-Schwartz (1975--1976), Espaces $L^p$, applications radonifiantes et g{\'e}om{\'e}trie des espaces de Banach},
  pages   = {1--11},
  year    = {1978},
  note    = {Exp.~No.~10-11, {\'E}cole Polytechnique, Paris},
}

@article{mendel2007ramsey,
  title         = {Ramsey partitions and proximity data structures},
  author        = {Mendel, Manor and Naor, Assaf},
  journal       = {Journal of the European Mathematical Society},
  volume        = {9},
  number        = {2},
  pages         = {253--275},
  year          = {2007},
  doi           = {10.4171/JEMS/79},
  eprint        = {cs/0511084},
  archivePrefix = {arXiv},
}

@article{shelah1972combinatorial,
  title     = {A combinatorial problem; stability and order for models and theories in infinitary languages},
  author    = {Shelah, Saharon},
  journal   = {Pacific Journal of Mathematics},
  volume    = {41},
  number    = {1},
  pages     = {247--261},
  year      = {1972},
  publisher = {Mathematical Sciences Publishers},
  doi       = {10.2140/pjm.1972.41.247},
}

@article{sauer1972density,
  title     = {On the density of families of sets},
  author    = {Sauer, Norbert},
  journal   = {Journal of Combinatorial Theory, Series A},
  volume    = {13},
  number    = {1},
  pages     = {145--147},
  year      = {1972},
  publisher = {Elsevier},
  doi       = {10.1016/0097-3165(72)90019-2},
}

@inproceedings{lovett2011bounded,
  title         = {Bounded-depth circuits cannot sample good codes},
  author        = {Lovett, Shachar and Viola, Emanuele},
  booktitle     = {Proceedings of the 26th Annual IEEE Conference on Computational Complexity (CCC'11)},
  pages         = {243--251},
  year          = {2011},
  organization  = {IEEE},
  doi           = {10.1109/CCC.2011.11},
}

@article{benjamini2016bi,
  title         = {Bi-{L}ipschitz bijection between the {B}oolean cube and the {H}amming ball},
  author        = {Benjamini, Itai and Cohen, Gil and Shinkar, Igor},
  journal       = {Israel Journal of Mathematics},
  volume        = {212},
  number        = {2},
  pages         = {677--703},
  year          = {2016},
  publisher     = {Springer},
  doi           = {10.1007/s11856-016-1302-0},
  eprint        = {1310.2017},
  archivePrefix = {arXiv},
}

@article{mendel2008metric,
  title         = {Metric cotype},
  author        = {Mendel, Manor and Naor, Assaf},
  journal       = {Annals of Mathematics},
  volume        = {168},
  number        = {1},
  pages         = {247--298},
  year          = {2008},
  doi           = {10.4007/annals.2008.168.247},
  eprint        = {math/0506201},
  archivePrefix = {arXiv},
}

@article{bourgain1986type,
  title   = {On type of metric spaces},
  author  = {Bourgain, Jean and Milman, Vitali and Wolfson, Haim},
  journal = {Transactions of the American Mathematical Society},
  volume  = {294},
  number  = {1},
  pages   = {295--317},
  year    = {1986},
  doi     = {10.1090/S0002-9947-1986-0819949-8},
}

@article{naor2012introduction,
  title         = {An introduction to the {R}ibe program},
  author        = {Naor, Assaf},
  journal       = {Japanese Journal of Mathematics},
  volume        = {7},
  number        = {2},
  pages         = {167--233},
  year          = {2012},
  publisher     = {Springer},
  doi           = {10.1007/s11537-012-1222-7},
  eprint        = {1204.2193},
  archivePrefix = {arXiv},
}

@article{pach2011remarks,
  title         = {Remarks on a {R}amsey theory for trees},
  author        = {Pach, J{\'a}nos and Solymosi, J{\'o}zsef and Tardos, G{\'a}bor},
  journal       = {Combinatorics, Probability and Computing},
  volume        = {22},
  number        = {5},
  pages         = {750--763},
  year          = {2013},
  publisher     = {Cambridge University Press},
  doi           = {https://doi.org/10.1007/s00493-012-2763-3},
  eprint        = {1107.5301},
  archivePrefix = {arXiv},
}

@article{furstenberg2003markov,
  title     = {Markov processes and {R}amsey theory for trees},
  author    = {Furstenberg, Hillel and Weiss, Benjamin},
  journal   = {Combinatorics, Probability and Computing},
  volume    = {12},
  number    = {5-6},
  pages     = {547--563},
  year      = {2003},
  publisher = {Cambridge University Press},
  doi       = {10.1017/S0963548303005893},
}

@article{szemeredi1975sets,
  title     = {On sets of integers containing $k$ elements in arithmetic progression},
  author    = {Szemer{\'e}di, Endre},
  journal   = {Acta Arithmetica},
  volume    = {27},
  number    = {1},
  pages     = {199--245},
  year      = {1975},
  publisher = {Polska Akademia Nauk. Instytut Matematyczny PAN},
  doi       = {10.4064/aa-27-1-199-245},
}

@article{BartalLinialMendelNaor2005,
  author        = {Bartal, Yair and Linial, Nathan and Mendel, Manor and Naor, Assaf},
  title         = {On metric {R}amsey-type phenomena},
  journal       = {Annals of Mathematics},
  year          = {2005},
  volume        = {162},
  number        = {2},
  pages         = {643--709},
  doi           = {10.4007/annals.2005.162.643},
  eprint        = {math/0406353},
  archivePrefix = {arXiv},
}

@book{MitzenmacherUpfal2005,
  author    = {Mitzenmacher, Michael and Upfal, Eli},
  title     = {Probability and Computing: Randomized Algorithms and Probabilistic Analysis},
  publisher = {Cambridge University Press},
  address   = {Cambridge},
  year      = {2005},
  doi       = {10.1017/CBO9780511813603},
}
\end{document}